\documentclass[12pt]{amsart}
\setlength{\textheight}{21cm}
\setlength{\textwidth}{16cm}
\setlength{\topmargin}{0cm}
\setlength{\parskip}{0.3\baselineskip}
\hoffset=-1.4cm
\baselineskip=13pt

\usepackage{amsmath}
\usepackage{amssymb}
\usepackage[all]{xy}
\usepackage{mathdots}
\usepackage{tikz}

\numberwithin{equation}{section}

\newtheorem{thm}{Theorem}[section]

\newtheorem{prop}[thm]{Proposition}
\newtheorem{cor}[thm]{Corollary}
\newtheorem{rem}[thm]{Remark}

\newtheorem{exam-nota}[thm]{Example-Notation}
\newtheorem{nota}[thm]{Notation}
\newtheorem{dfn}[thm]{Definition}

\newtheorem{dfn-nota}[thm]{Definition-Notation}

\newtheorem{dfn-lem}[thm]{Lemma-Definition}

\newcommand{\beqa}{\begin{eqnarray*}}
\newcommand{\eeqa}{\end{eqnarray*}}

\newcommand{\Pflag}{{\mathcal P}}

\newcommand{\Orb}{{\mathcal O}}

\newcommand{\id}{\mbox{${\rm id}$}}

\newcommand{\ft}{\mbox{${\mathfrak t}$}}
\newcommand{\fk}{\mbox{${\mathfrak k}$}}
\newcommand{\fg}{\mbox{${\mathfrak g}$}}

\newcommand{\fl}{\mbox{${\mathfrak l}$}}
\newcommand{\fs}{\mbox{${\mathfrak s}$}}
\newcommand{\fsl}{\mbox{${\fs\fl}$}}

\newcommand{\fh}{\mbox{${\mathfrak h}$}}

\newcommand{\fp}{\mbox{${\mathfrak p}$}}
\newcommand{\fr}{\mbox{${\mathfrak r}$}}
\newcommand{\fb}{\mbox{${\mathfrak b}$}}

\newcommand{\fm}{\mbox{${\mathfrak m}$}}
\newcommand{\fu}{\mbox{${\mathfrak u}$}}

\newcommand{\dw}{\dot{w}}
\newcommand{\dtw}{\dot{\tilde{w}}}

\newcommand{\eps}{\epsilon}

\newcommand{\PR}{\mbox{${\mathbb P}$}}

\newcommand{\C}{\mbox{${\mathbb C}$}}

\newcommand{\Ad}{{\rm Ad}}

\newcommand{\fgl}{\mathfrak{gl}}

\newcommand{\B}{\mathcal{B}}

\newcommand{\fso}{\mathfrak{so}}
\newcommand{\fsp}{\mathfrak{sp}}
\newcommand{\hv}{\hat{v}}

\newcommand{\ms}{m(s_{\alpha})}

\newcommand{\dotw}{\dot{w}}

\newcommand{\tildeQ}{{\widetilde{Q}}}
\newcommand{\tildeQK}{{\widetilde{Q}}_K}

\newcommand{\rkk}{\mbox{rank}(\fk)}

\title{$B_{n-1}$-bundles on the flag variety, I}

\author[M. Colarusso]{Mark Colarusso}
\address{Department of Math and Stats, University of South Alabama, Mobile, Al, 36608}
\email{mcolarusso@southalabama.edu}

\author[S. Evens]{Sam Evens}
\address{Department of Mathematics, University of Notre Dame, Notre Dame, IN, 46556}
\email{sevens@nd.edu}
\subjclass[2020]{14M15, 14L30, 20G20}
\keywords{$K$-orbits on flag variety, algebraic group actions}

\begin{document}
\maketitle

\begin{abstract}
In this paper, we consider the orbits of a Borel subgroup $B_{n-1}$ of $G_{n-1}=GL(n-1)$ (respectively $SO(n-1)$) acting on the flag variety $\mathcal{B}_{n}$
of $G=GL(n)$ (resp. $SO(n)$).  The group $B_{n-1}$ acts on $\B_{n}$ with finitely many orbits, and we use the known description of $G_{n-1}$-orbits on $\B$ to study these orbits.  In particular, 
we show that a $B_{n-1}$-orbit is a fibre bundle over a $B_{n-1}$-orbit in a generalized flag variety of $G_{n-1}$ with fibre a $B_{m-1}$-orbit on $\B_{m}$ for some $m<n$.  We further use this 
fibre bundle structure to study the orbits inductively and describe the monoid action on the fibre bundle.  In a sequel to this paper, we use these results to give a complete combinatorial model for these orbits
and show how to understand the closure relations on these orbits in terms of the monoid action. 
\end{abstract}

\section{Introduction}\label{s:intro}

In this paper, we consider certain solvable subgroups acting on the
flag variety with finitely many orbits.   The subgroups we consider arise naturally in the complex Gelfand-Zeitlin integrable system and in the theory
of Gelfand-Zeitlin modules.   The goal of this paper, and its sequel,
is to provide a combinatorial description of the orbits and in particular
to understand monoidal actions on the orbits.  We then hope to use these
results to understand representations of the Lie algebra realized via
the Beilinson-Bernstein correspondence using local systems on these orbits.

In more detail, let $G_n = GL(n)$ or $SO(n)$, and embed $G_{n-1}$
as a symmetric subgroup of $G_{n}$, up to center in the $GL(n)$-case.   We note that the pairs $(G_n,G_{n-1})$ are (up to center and isogeny) the multiplicity free symmetric pairs.
We let $B_{n-1}$ denote a Borel subgroup of $G_{n-1}$ and recall that 
$B_{n-1}$ acts on the flag variety of Borel subgroups $\B_n$ of $G_n$
with finitely many orbits.  Our main result is that $B_{n-1}$-orbits
on $\B_n$ fibre over $B_{n-1}$-orbits on a generalized flag variety
of $G_{n-1}$ with fibre given by $B_{m-1}$-orbits on $\B_m$ for some
$m < n$.   More precisely, we construct a class of parabolic subgroups
of $G_n$, which we call {\it special parabolic subgroups}.   

\begin{thm}\label{thm:mainintro}
For each $B_{n-1}$-orbit $Q$ on $\B_n$, there is a special parabolic
$R$ of $G_n$ so that $R \cap G_{n-1}$ is a parabolic subgroup of $G_{n-1}$,
and 
\begin{enumerate}
\item there is a fibre bundle $Q \to Q_1$, where $Q_1$ is a $B_{n-1}$-orbit
on $G_{n-1}/R \cap G_{n-1}$;
\item the fibre of $Q \to Q_1$ may be identified with a $B_{m-1}$-orbit $Q_2$
on $\B_m$.
\end{enumerate}
If we denote the above $B_{n-1}$-orbit by $Q = \mathcal{O}(Q_1, Q_2)$, then
$\mathcal{O}(Q_1, Q_2)=\mathcal{O}(Q_1^{\prime}, Q_2^{\prime})$ if and only if $Q_i = Q_i^{\prime}$
for $i=1, 2.$
\end{thm}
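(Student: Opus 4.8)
The plan is to combine the known classification of $G_{n-1}$-orbits on $\B_n$ with the standard correspondence between orbits on the base and on a fibre of an equivariant map, the role of the fibre being played by a smaller flag variety $\B_m$ via the special parabolic subgroups constructed above. Since $B_{n-1}$ acts on $\B_n$ with finitely many orbits and every $B_{n-1}$-orbit $Q$ lies in a unique $G_{n-1}$-orbit $\mathcal{O}$ on $\B_n$, the first step is to attach to $\mathcal{O}$ its special parabolic $R = R(\mathcal{O})$ in $G_n$, whose defining properties are: $P := R \cap G_{n-1}$ is a parabolic subgroup of $G_{n-1}$; the canonical projection $\pi \colon \B_n \to G_n/R$ maps $\mathcal{O}$ onto a single $G_{n-1}$-orbit $\mathcal{O}_1 \cong G_{n-1}/P$ on $G_n/R$; and the fibre of $\pi|_{\mathcal{O}} \colon \mathcal{O} \to \mathcal{O}_1$ over a base point $x_0$ is, equivariantly for $\mathrm{Stab}_{G_{n-1}}(x_0)$, identified with a copy of $\B_m$ for some $m < n$, on which the reductive quotient of $\mathrm{Stab}_{G_{n-1}}(x_0)$ acts as $G_{m-1}$ acts on $\B_m$. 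Granting this, set $Q_1 := \pi(Q)$, a $B_{n-1}$-orbit on $\mathcal{O}_1 \cong G_{n-1}/P$.

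Next comes the fibrewise analysis. Choose the base point $x_0 \in \mathcal{O}_1$ so that $Q_1 = B_{n-1} \cdot x_0$, and put $H := \mathrm{Stab}_{B_{n-1}}(x_0)$, so that $Q_1 \cong B_{n-1}/H$. Since $Q_1$ is a single $B_{n-1}$-orbit, the orbit-fibre correspondence for $B_{n-1}$-equivariant maps yields a bijection between the $B_{n-1}$-orbits contained in $(\pi|_{\mathcal{O}})^{-1}(Q_1)$ and the $H$-orbits on the fibre $F := (\pi|_{\mathcal{O}})^{-1}(x_0)$. Under the identification $F \cong \B_m$, the group $H = B_{n-1} \cap \mathrm{Stab}_{G_{n-1}}(x_0)$ acts through the reductive quotient of $\mathrm{Stab}_{G_{n-1}}(x_0)$ as a Borel subgroup $B_{m-1}$ of $G_{m-1}$ acting on $\B_m$, so the $H$-orbits on $F$ are exactly the $B_{m-1}$-orbits $Q_2$ on $\B_m$. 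Declaring $\mathcal{O}(Q_1, Q_2)$ to be the $B_{n-1}$-orbit matched to the pair $(Q_1, Q_2)$, one sees at once that $\mathcal{O}(Q_1, Q_2)$ is a single, well-defined orbit; that it maps onto $Q_1$ via $\pi$ with fibre over $x_0$ equal to the single $H$-orbit $Q_2 \subseteq F \cong \B_m$, so that $\mathcal{O}(Q_1, Q_2) = B_{n-1} \times_H Q_2$ is a fibre bundle over $Q_1 = B_{n-1}/H$ with fibre identified with $Q_2$, which is precisely parts (1) and (2); and that, as $Q$ runs over the $B_{n-1}$-orbits inside $\mathcal{O}$, the pair $(Q_1, Q_2)$ runs over all admissible pairs.

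It then remains to establish the parametrization. Suppose $\mathcal{O}(Q_1, Q_2) = \mathcal{O}(Q_1', Q_2')$ as subsets of $\B_n$, and call this common orbit $Q$. All the data entering the construction --- the $G_{n-1}$-orbit $\mathcal{O}$ containing $Q$, the special parabolic $R(\mathcal{O})$, the projection $\pi$, and the base point $x_0$ (fixed once and for all for $\mathcal{O}$, up to a harmless choice of representative) --- depend only on $Q$ and not on either decomposition. Hence both decompositions use the same $\mathcal{O}$, $R$, $\pi$, and $x_0$, whence $Q_1 = \pi(Q) = Q_1'$ and $Q_2 = (\pi|_Q)^{-1}(x_0) = Q_2'$. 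The reverse implication is immediate from the construction of $\mathcal{O}(Q_1, Q_2)$.

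The main obstacle is the structural input invoked in the first paragraph: showing that for each $G_{n-1}$-orbit $\mathcal{O}$ one can choose a special parabolic $R$ with $R \cap G_{n-1}$ parabolic in $G_{n-1}$ and with the fibre of $\pi|_{\mathcal{O}}$, together with the induced action of the reductive quotient of the stabilizer, reproducing exactly the pair $(G_m, \B_m)$ --- with the correct $m$ and no spurious extra factors. This is where the explicit classification of $G_{n-1}$-orbits on $\B_n$ and the combinatorial recipe for $R$ and $m$ must be used; granting it, the orbit-fibre correspondence and the bookkeeping above are formal.
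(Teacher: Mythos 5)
Your architecture is the same as the paper's (attach a special parabolic $R$ to the $G_{n-1}$-orbit $\mathcal{O}$ containing $Q$, project to $G_n/R$, and run the orbit--fibre correspondence for the $B_{n-1}$-equivariant map $\pi|_{\mathcal{O}}$), and your bookkeeping in the second and third paragraphs is essentially the paper's derivation of Theorem \ref{thm:Bkbundles} and of the bijection \eqref{eq:correspondence}. The problem is that everything of substance is packed into the ``structural input'' of your first paragraph and then explicitly deferred. That input is precisely what the paper proves in Section \ref{s:bundlestructure}: (i) Theorem \ref{thm:specials}, the existence of a $\theta$-stable special parabolic $R \supset B$ with Levi $L$ such that $(\fl,\fl\cap\fk)$ is again a multiplicity-free pair and $(K\cap L)\cdot(\fb\cap\fl)$ is \emph{open} in $\B_{\fl}$ --- established by a case-by-case analysis in types $A$, $B$, $D$ using the explicit orbit representatives of Propositions \ref{prop:typeAflag}--\ref{prop_soevenflag}; and (ii) the verification (Equation \eqref{eq:weclaim}) that your group $H=\mathrm{Stab}_{B_{n-1}}(x_0)=B_{n-1}\cap w(R)$ acts on the fibre through the Borel subgroup $B_{n-1}\cap w(L)=w(B_{n-1}\cap L)$ of $K\cap w(L)$, which uses both that special parabolics contain $\fb_{n-1}$ and that $w$ is a minimal coset representative in $W_K^{K\cap R}$. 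Without these, the claim that the $H$-orbits on the fibre are $B_{m-1}$-orbits is unsupported.

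Moreover, the structural input as you state it is false, not merely unproved. The fibre of $\pi|_{\mathcal{O}}$ over $x_0$ is \emph{not} a copy of $\B_m$ on which the reductive quotient of $\mathrm{Stab}_{G_{n-1}}(x_0)$ acts as $G_{m-1}$ acts on $\B_m$: if it were, then $\mathcal{O}$ would equal $\pi^{-1}(\pi(\mathcal{O}))$, whereas $\pi^{-1}\pi(\mathcal{O})\cong K\times_{K\cap R}R/B$ contains several $K$-orbits whenever $\mathcal{O}$ is not closed (e.g.\ for the open $K$-orbit in type $A$ one has $R=G$ and the ``fibre'' would be all of $\B_n$). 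The correct statement is that the fibre is the \emph{open} $G_{m-1}$-orbit on $\B_{\fl}\cong\B_m$, a proper open subset. Consequently the $H$-orbits on the fibre are only those $B_{m-1}$-orbits contained in the open $G_{m-1}$-orbit on $\B_m$, not all $B_{m-1}$-orbits on $\B_m$; taken literally, your parametrization overcounts, and the set of ``admissible pairs'' in \eqref{eq:correspondence} is $B_{n-1}\backslash K/K\cap R\times(B_{n-1}\cap L)\backslash\tilde{Q}_{K\cap L}$ with the second factor restricted to the open orbit. (Identifying \emph{those} orbits in turn with orbits on a smaller flag variety is yet another nontrivial step, Theorem \ref{thm:openorbit}, which rests on an explicit stabilizer computation.) So the proposal is a correct reduction of the theorem to its key lemmas, but the lemmas themselves --- the actual content --- are assumed, and one of them is stated in a form that does not hold.
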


Our theorem allows for an inductive description of $B_{n-1}$-orbits
on the flag variety $\B_n$ in terms of special parabolics,
Weyl group data, and data derived inductively from the same problem
for a smaller index.  The special parabolic subalgebras are in one-to-one correspondence with $G_{n-1}$-orbits on $\B_{n}$.
For the closed $G_{n-1}$-orbits on $\B_{n}$, the corresponding special parabolics are the representatives of these orbits given in Part (2) of Propositions \ref{prop:typeAflag}, \ref{prop_sooddflag}, and \ref{prop_soevenflag}.  
The special parabolics corresponding to $G_{n-1}$-orbits that are not closed are given in the orthogonal cases by taking
parabolics containing the upper triangular matrices and corresponding
to the choice of simple roots $\{ \alpha_i,\dots, \alpha_{l}\}$ where $i=1,\dots, l$ for type B 
and $i=1,\dots, l-1$ for type D using the usual ordering of
 simple roots (see \eqref{eq:Levi2} and \eqref{eq:Levi1}).  In the general linear case the special parabolics  are
 given by choosing a consecutive subset of simple roots and a corresponding element of the Weyl group (see \eqref{eq:rijflag}).


In a sequel to this paper, we use our results to give a combinatorial description of the $B_{n-1}$-orbits on $\B_n$.   Further, we use the action of a monoid using both simple roots of $G_{n-1}$ and of $G_n$ to show that every $B_{n-1}$-orbit arises from a zero dimensional $B_{n-1}$-orbit using the monoid action.   Richardson and Springer \cite{RS} have pointed out the seminal role of verifying this result in the description of the orbit closure relation.  As a consequence of work of Richardson and Springer \cite{RS}, we use this result to determine the closure relation between $B_{n-1}$-orbits in terms of the monoid action.   An essential step in this process is to describe instances when this monoid action may be described efficiently using the fibre bundle description of the orbits of Theorem \ref{thm:mainintro} (see Theorems \ref{thm:Kintertwiners}, \ref{thm:rightbundleaction}, and \ref{thm:openinter}).

In the $GL(n)$-case, Hashimoto \cite{Hashi} describes the 
 $B_{n-1}$-orbits on $\B_n$, and discusses part of the monoid
action that we consider in this paper.   Our work advances on \cite{Hashi} in that our approach also describes orbits both in general linear and orthogonal cases in an essentially uniform manner,  elucidates the bundle structure, and gives a complete description of the closure relation. While Hashimoto's approach is based on analyzing orbits of a Borel subgroup, our approach is based more on studying the orbits of $G_{n-1}$ on the flag manifold.   It would also be interesting to relate our results to the work of Gandini and Pezzini \cite{GP}, who study the general case of orbits of solvable subgroups of $G$ with finitely many orbits on the flag variety of $G$.   In particular, it would be of interest to understand some of the invariants they consider in their more general context in our situation.

In Section \ref{ss:notation} of this paper, we introduce notation and recall standard facts about orbits and embeddings of $G_{n-1}$ in $G_n$ for later use.   In Section \ref{s:bundlestructure}, we establish the fibre bundle structure for $B_{n-1}$-orbits and use it to give an inductive method to describe $B_{n-1}$-orbits on $\B_n$.   In Section \ref{s:monoid}, we discuss the monoid action and compute it in cases where the role of the bundle structure is salient.   The remaining cases are treated in the sequel to this paper using a more explicit combinatorial description of the orbits.

We would like to thank Friedrich Knop, Jacopo Gandini, and Guido Pezzini for useful discussions which aided in the development of our results.   During the preparation of this paper, the first author was supported in part by the National Security Agency grant H98230-16-1-0002 and the second author was supported in part by the Simons Foundation grant 359424.

\section{Preliminaries}\label{ss:notation}
We discuss some notation and standard results that will be used throughout
the paper.

\subsection{Notation}\label{ss:papernotations}

By convention, all algebraic groups and Lie algebras discussed in this paper have complex coefficients.   If an algebraic group
$M$ is defined, we denote by the corresponding Gothic letter $\fm$ its
Lie algebra, and if a Lie subalgebra $\fm$ of the Lie algebra of a complex
Lie group $G$ is given, we let $M$ denote the connected subgroup of $G$
with Lie algebra $\fm$.  For an algebraic group $M$ and $g\in M$, we
denote by $\Ad(g)$ both the adjoint action of $M$ on its Lie algebra, and
the conjugation action of $M$ on itself.
As a convention, we let $\B_{\fm}$ denote the flag variety of Borel subalgebras
of the Lie algebra $\fm$, and let $\B_n = \B_{{\fg}_n}$ for ${\fg}_n =\fgl(n)$ or $\fso(n)$.   For $(V,\beta)$ a vector space with symmetric bilinear
form $\beta$ and subspace $U$, we let $U^{\perp} := \{ v \in V : \beta(u,v)=0 \ \forall u \in U \}.$   Throughout the paper, we let $\imath$ denote a fixed choice of $\sqrt{-1}$.  

\subsection{Realizations}\label{ss:realization}

We use the conventions for realizing $\fso(n)$ from Chapters 1 and 2 of \cite{GW}
(see also \cite{CE19}).   More explicitly, we let $\fso(n)$ be the symmetry Lie algebra 
of  the non-degenerate, symmetric bilinear form $\beta$ on $\C^{n}$
given by $\beta(x,y)= x^{T} J y$, where $J$ is the automorphism of
$\C^n$ such that $J(e_i)=e_{n+1-i}$ for the standard basis $e_1, \dots, e_n$
of $\C^n$ and $i=1, \dots, n$. Note that $\beta(e_j, e_{n+1-j})=1$ for
$j=1, \dots, n$.  We let $SO(n)$ be the connected algebraic
group with Lie algebra $\fso(n).$  Then the subalgebra $\fh_n$ consisting
of diagonal matrices in $\fso(n)$ is a Cartan subalgebra, and the subalgebra
$\fb_+$ consisting of upper triangular matrices in $\fso(n)$ is a Borel
subalgebra, and we refer to these as the standard Cartan and Borel subalgebras.
We refer also to $\fh_n$ and $\fb_+$ to denote the
diagonal and upper triangular matrices in $\fgl(n).$   
For $\fg=\fgl(n)$ or $\fso(n)$, any parabolic subalgebra $\fp\supset\fb_{+}$ we 
refer to as a standard parabolic subalgebra.  We also let $\Phi=\Phi(\fg,\fh_{n})$ denote 
the roots of $\fg$ with respect to the Cartan subalgbra $\fh_{n}$.  
If $\alpha\in \Phi$, we let $\fg_{\alpha}$ denote the root 
space corresponding to the root $\alpha$. 

It will be convenient to relabel the standard basis 
$\{e_{1},\dots, e_{n}\}$ of $\C^{n}$ depending on whether $n$ is even or odd.  
If $n=2l$ is even, then we let $e_{-i}:=e_{2l+1-i}$ for $i=1,\dots, l$.  Note 
that for $j,\, k\in \{-l,\dots,-1,1,\dots, l\}$, we have $\beta(e_{j}, e_{k})=\delta_{j,-k}$.  
If $n=2l+1$, then we let $e_{0}:=e_{l+1}$ and $e_{-i}:=e_{2l+2-i}$ for $i=1,\dots, l$.  
For $j,\, k\in\{-l,\dots, -1,0,1,\dots, l\}$, we have $\beta(e_{j}, e_{k})=\delta_{j,-k}$.  

We recall the real rank one subalgebras for $\fgl(n)$ and $\fso(n)$.
For $\fg=\fgl(n)$, let $t=\mbox{diag}[1,\dots, 1, -1] \in GL(n)$, and let $\theta(x)=txt^{-1}$ for $x\in\fg$.  Then the fixed set 
$\fg^{\theta}=\fgl(n-1)\oplus\fgl(1)$, regarded as block matrices, and $\fg_{n-1} = \fgl(n-1) \oplus 0$.

For $\fg=\fso(2l+1)$, let $t$ be an element of the Cartan subgroup with
Lie algebra $\fh_{2l+1}$  with the property that 
$\Ad(t)|_{\fg_{\alpha_i}}=\id$ for $i=1, \dots, l-1$ and $\Ad(t)|_{\fg_{\alpha_l}}=-\id$.  Consider the involution $\theta_{2l+1}:=\Ad(t)$.   Then 
$\fk:=\fg^{\theta_{2l+1}}\cong \fso(2l)$, realized as block matrices with the
$l+1$st rows and columns equal to $0$ 
(see \cite{Knapp02}, p. 700).  Note that $\fh_{2l+1}=\fh_{2l} \subset \fk$.
For later use, let $\sigma_{2l+1}$ be the diagonal matrix with respect to the
basis $\{ e_{1},\dots, e_{l}, e_{0}, e_{-l},\dots, e_{-1} \}$ such that $\sigma_{2l+1}\cdot e_j = e_j$
for $j\not= 0$ and $\sigma_{2l+1}\cdot e_{0} = -e_{0}$ and note that
$\theta = \theta_{2l+1} =  \Ad(\sigma_{2l+1})$.

In the case $\fg=\fso(2l)$, consider the involutive 
linear map $\sigma_{2l}:\C^{2l} \to \C^{2l}$
such that $\sigma_{2l}(e_l)= e_{-l}$ and $\sigma_{2l}(e_i)=e_i$ for $i\not= l$ or $-l.$
Define an involution $\theta$ of $\fg$ by $\theta = \Ad(\sigma_{2l})$.
Then $\fk:=\fg^{\theta_{2l}}\cong\fso(2l-1)$,
and $\theta_{2l}$ is the involution induced by the diagram automorphism interchanging
the simple roots $\alpha_{l-1}$ and $\alpha_l$ (see \cite{Knapp02}, p. 703).  If 
we realize $\fh_{2l}$ as diagonal matrices of the form
$x=\mbox{diag}[x_1, \dots, x_l, -x_l, \dots, -x_1]$ and define for $i=1, \dots,
l$, $\eps_i \in \fh_{2l}^*$ by $\eps_i(x)=x_i$, then the induced action of
$\theta_{2l}$ is via
 $\theta_{2l}(\eps_l)=-\eps_l$ and $\theta_{2l}(\eps_i)=\eps_i$ for $i=1, \dots, l-1$.  
We will omit the subscripts $2l+1$ and $2l$ from $\theta$
when $\fg$ is understood.

We also denote the involution of the corresponding classical group $G$ by $\theta$.  For 
$G=GL(n)$, $G^{\theta}\cong GL(n-1) \times GL(1)$ is connected, while for 
$G=SO(n)$, the group $G^{\theta}\cong S(O(n-1)\times O(1))$ is disconnected.  
We let $K:=(G^{\theta})^{0}$ be the identity component of $G^{\theta}$.  Then $K\cong SO(n-1)$, and $K$ has Lie algebra $\fk=\fg^{\theta}$. 


\begin{rem}\label{rem:sominus2}
For $SO(n)$, note that we can realize the subgroups $SO(n-1)$ and $SO(n-2)$ as follows.
For $n$ either odd or even, note that the $-1$ eigenspace $(\C^n)^{-\sigma_n}$
is one dimensional and is nondegenerate for the restriction of $\beta$.
If  $(\C^n)^{-\sigma_n} = \C v_n$, then since the perpindicular of an $SO(n-1)$-stable subspace is $SO(n-1)$-stable, it follows that
\begin{equation} \label{e:sominus1}
SO(n-1) = \{ g \in SO(n) : g\cdot v_n= v_n \}.
\end{equation}
  In the case $n=2l,$
 let $V_2 = \C e_l + \C e_{-l}.$   Then 
\begin{equation} \label{e:soevenless2}
SO(2l-2) = \{ g \in SO(2l) : g\cdot v = v \ \forall v \in V_2 \}.
\end{equation}
    Here $SO(2l-2)$ is
realized as determinant one orthogonal transformations of the orthogonal
complement $V_2^{\perp}$ to $V_2$.  
In the case $n=2l+1$, one similarly observes that
\begin{equation} \label{e:sooddless2}
SO(2l-1) = \{ g \in SO(2l+1) : g\cdot e_{0}=e_{0}, \ g\cdot (e_l - e_{-l})
= e_l - e_{-l}.\}
\end{equation}
For later use, we also note that if $\alpha_1, \dots, \alpha_l$ are the
simple roots for $SO(2l+1)$, then the simple roots for $SO(2l)$ are given
by $\alpha_1, \dots, \alpha_{l-1}, \alpha_{l-1} + 2\alpha_l$.  We may take
as root vectors for the last two simple roots of the Lie algebra $\fso(2l)$ the vectors
 $e_{\alpha_{l-1}}=E_{l-1,l}-E_{-l,-(l-1)}$ and $e_{\alpha_{l}}=E_{l-1,-l}-E_{l,-(l-1)}$ and then their sum
$e_{\alpha_{l-1}}+e_{\alpha_{l}}$ is the root vector for
for $\fso(2l-1)$ for the simple root $\alpha_{l-1}$.
\end{rem}


\subsection{Description of $K$-orbits on $\B_n$ in the multiplicity free
setting}\label{ss:Korbits}

Note that the cases $(\fg, \fg_{n-1})=(\fgl(n), \fgl(n-1))$ or $(\fso(n), \fso(n-1))$
are up to center essentially the only situations where each irreducible representation
of $\fg$ has multiplicity free branching law when regarded as a representation
of a symmetric subalgebra $\fk$.  By abuse of notation, we use $\fk$ to denote $\fg_{n-1}$ and $K$ to denote $G_{n-1}$.   This property is well-known to be equivalent to the property
that a Borel subgroup $B_K$ of $K$ has finitely many orbits on $\B_{\fg}$, the
variety of Borel subalgebras of $\fg$ (see for example, Proposition 3.1.3
of \cite{CE19}).  
 
 \begin{nota}\label{nota:orbitnotation}
 For a group $A$ acting on a variety $X$, we let $A\backslash X$ denote the collection of  $A$-orbits on $X$
 \end{nota}

We will need to use an explicit description of $K\backslash \B_{\fg}$.
We present this mostly without proof, and note that it follows from explicit
study of the Richardson-Springer discussion of orbits in \cite{RS}, and
is worked out explicitly for $GL(n)$ in Section 4.4 of \cite{CEexp} (see especially
Example 4.16, and Examples 4.30 to 4.32)
and for $SO(n)$ in section 2.7 of \cite{CEspherical}.

We first recall the monoidal action.  Suppose $M$ is a subgroup of $G$
acting with finitely many orbits on the flag variety $\B_{\fg}$ of Borel
subalgebras of $\fg$, the Lie algebra of connected reductive group $G$.    
For a simple root $\alpha$ for $G$, consider the ${\PR}^1$-bundle 
$p:\B_{\fg} \to {\Pflag}_{\alpha, \fg}$, where ${\Pflag}_{\alpha, \fg}$ consists of the parabolic subalgebras of $\fg$ with Lie algebra $G$-conjugate
 to $\fb_0 + \fg_{-\alpha}$,
 where $\fb_0$ is a Borel subalgebra for which $\alpha$ is a simple root.
 We call the parabolic subalgebras in ${\Pflag}_{\alpha, \fg}$ simple parabolics of type $\alpha$.
Then if $Q=M\cdot \fb$ is a $M$-orbit in $\B_{\fg}$, $p^{-1}p(Q)$ contains
a unique open $M$-orbit, and we call this orbit $m(s_{\alpha})*Q$
and note that if $m(s_{\alpha})*Q\not= Q$, then $\dim(m(s_{\alpha})*Q)=\dim(Q)+1.$   
We let $\fp_{\alpha}=p(\fb)$ and let $P_{\alpha}$ denote the corresponding
parabolic subgroup of $G$ with unipotent radical $U_{\alpha}$ and if $T$ is a maximal torus of the group $B$ with Lie algebra $\fb$,
let $L_{\alpha}$ be the Levi subgroup of $P_{\alpha}$ containing $T$ and let $Z_{\alpha}$ be the center of $L_{\alpha}$.
We denote by $M_\alpha$ the image of
$M\cap P_{\alpha}$ in the group $S_{\alpha}:= P_{\alpha}/Z_{\alpha}U_{\alpha}$ locally isomorphic to $SL(2)$.
 Then one of the following occurs.

\begin{enumerate}
\item Complex Case: $M_{\alpha}$ contains a maximal unipotent subgroup of $S_{\alpha}$ and is contained
in a Borel subgroup of $S_{\alpha}$.  In this case, $m(s_{\alpha})*Q$ meets
$p^{-1}p(\fb)$ in a $M_{\alpha}$-orbit isomorphic to $\C$ and the complement is a point;
\item Noncompact/Real Case: $M_{\alpha}$ is locally isomorphic to a maximal torus of $S_{\alpha}$.  In this case, $m(s_{\alpha})*Q$ meets
$p^{-1}p(\fb)$ in a $M_{\alpha}$-orbit isomorphic to $\C^{\times}$, and if $M_{\alpha}$ is a torus, the complement is a union of two $M_{\alpha}$-orbits, and otherwise, the complement is a single $M_{\alpha}$-orbit;
\item Compact Case: $M_{\alpha} = S_{\alpha}.   $   In this case, $m(s_{\alpha})*Q=Q$ and
$p^{-1}p(\fb) \cong \PR^1$ is a single $M_{\alpha}$-orbit.
\end{enumerate}

\begin{dfn}\label{dfn:roottype}
\begin{enumerate}
\item In the complex case above, we say that the simple root $\alpha$ is 
{\it complex stable} for $Q$ if $m(s_{\alpha})*Q \not= Q$ and 
{\it complex unstable} for $Q$ if $m(s_{\alpha})*Q = Q;$
\item In the noncompact/real case above, we say that the simple root $\alpha$
is {\it noncompact } for $Q$ if $m(s_{\alpha})*Q \not= Q$ and {\it real} for $Q$
if $m(s_{\alpha})*Q=Q;$
\item In the compact case above, we say the simple root $\alpha$ is
{\it compact} for $Q$, and note that in this case $m(s_{\alpha})*Q=Q.$
\end{enumerate}
\end{dfn}

The type of a simple root $\alpha$ for $Q$ depends only on $Q$ and not on
the choice of $\fb$.  We introduce some notation needed to describe
$K$-orbits on $\B_{\fg}.$

\begin{rem} \label{dfn:roottypetheta}
  When $M$ is the fixed point set of an involution $G$, then our description of roots by type coincides with the usual definition from real groups \cite{RS}.
  \end{rem}

\begin{nota}\label{nota:cayley}
For each simple root $\alpha$, choose root vectors $e_\alpha \in \fg_\alpha$,
$f_\alpha \in \fg_{-\alpha}$, and $h_\alpha=[e_\alpha, f_\alpha]$ so that
 $e_{\alpha}, f_{\alpha}, h_{\alpha}$ spans a subalgebra 
of $\fg$ isomorphic to $\fsl(2)$. 
Consider the unique Lie algebra homomorphism $\phi_\alpha:\fsl(2) \to \fg$ such that:
\begin{equation}\label{eq:sl2map}
\phi_{\alpha}:\left[\begin{array}{cc} 0 & 1\\
0 & 0\end{array}\right]\to e_{\alpha},\;    
\phi_{\alpha}:\left[\begin{array}{cc} 0 & 0\\
1 & 0\end{array}\right]\to f_{\alpha},\;
\phi_{\alpha}:\left[\begin{array}{cc} 1 & 0\\
0 & -1\end{array}\right]\to h_{\alpha}\; 
\end{equation}
Let $\phi_\alpha:SL(2) \to G$  be the induced Lie group homomorphism,
and let 
    \begin{equation}\label{eq:cayley}
u_{\alpha}=\phi_{\alpha}\left(\frac{1}{\sqrt{2}}\left[\begin{array}{cc} 1 & \imath\\
\imath & 1\end{array}\right]\right).
\end{equation}
\end{nota}

In the remainder of this section, we consider the cases
where $K=G_{n-1}$ as above, and consider the monoid
action relative to the $K$-action on $\B_n$. We now introduce some notation which will be used
throughout the paper.

 \begin{nota} \label{nota:standard}
  Let 
   $$
  \mathcal{F}=( V_{0}=\{0\}\subset V_{1}\subset\dots\subset V_{i}\subset\dots\subset V_{n}=\C^{n})
   $$
be a full flag in $\C^{n}$, with $\dim V_{i}=i$ and $V_{i}=\mbox{span}\{v_{1},\dots, v_{i}\}$, with each $v_{j}\in\C^{n}$.  We will denote this flag $\mathcal{F}$ 
by
   $$
  \mathcal{F}=  (v_{1}\subset  v_{2}\subset\dots\subset v_{i}\subset v_{i+1}\subset\dots\subset v_{n}). 
   $$

We will also make use of the following notation for partial flags in $\C^{n}$.  Let
   $$
   \mathcal{P}=(V_{0}=\{0\}\subset V_{1}\subset\dots\subset V_{i}\subset\dots\subset V_{k}=\C^{n})
   $$
   denote a $k$-step partial flag with $\dim V_{j}=i_{j}$ and $V_{j}=\mbox{span}\{v_{1},\dots, v_{i_{j}}\}$ for $j=1,\dots, k$.  
   Then we denote $\mathcal{P}$ as 
   $$
  \mathcal{P}=( \{v_{1},\dots, v_{i_{1}}\} \subset \{v_{i_{1}+1},\dots, v_{i_{2}}\}\subset\dots\subset \{v_{i_{k-1}+1},\dots, v_{i_{k}}\}).
   $$
\end{nota}
 
\begin{nota}\label{nota:Weyl}

 Let $T$ be the maximal torus of $G$  with Lie algebra $\ft$, and let $W$ be the Weyl group with respect \text{t}o $T$. 
 For an element $w\in W$, let $\dot{w}\in N_{G}(T)$ be a representative of $w$.   
 If $\fm\subset\fg$ is a Lie subalgebra which is normalized by $T$, then $\Ad(\dot{w})\fm$ is independent of the choice of representative of $w$,
 and we denote it by $w(\fm)$. Similarly, we denote by $w(M)$ the corresponding group.
\end{nota}

\begin{prop}\label{prop:typeAflag}
Let $\fg=\fgl(n)$ and $\fk=\fgl(n-1)$.  
\begin{enumerate}
\item The number of $K$-orbits on $\B$ is ${n+1\choose 2}$.  
\item For $i=1,\dots, n$, let $w_{i}$  be the cycle $(n, n-1, \dots i)$ in the symmetric group $\mathcal{S}_n$, 
and let $\fb_{i}:=w_{i}(\fb_{+})$.  The distinct closed $K$-orbits on $\B$ are the $K$-orbits $Q_{i}=K\cdot \fb_{i}$ so there are exactly $n$ closed $K$-orbits.  Further, the Borel subalgebra $\fb_{i}$ is 
the stabilizer of the flag:
\begin{equation}\label{eq:flagi}
\mathcal{F}_{i}:=(e_{1}\subset\dots \subset e_{i-1}\subset\underbrace{e_{n}}_{i}\subset e_{i}\subset \dots \subset e_{n-1}). 
\end{equation}
\item The non-closed $K$-orbits are of the form:
\begin{equation}\label{eq:Qijorbs}
Q_{i,j}=m(s_{\alpha_{j-1}})\dots m(s_{\alpha_{i}})* Q_{i}
\end{equation}
with $1\leq i< j\leq n$.  The codimension of $Q_{i,j}$ is $n-1-(j-i)$.  
Further, the Borel subalgebra:
\begin{equation}\label{eq:bij}
\fb_{i,j}:=\Ad(\hat{v})(\fb_+) \in Q_{i,j}, \ \ \hat{v} :={\dot{w}}_{i} \cdot u_{\alpha_{i}} \cdot  \dot{s}_{\alpha_{i+1}}\dots \dot{s}_{\alpha_{j-1}}, 
\end{equation} 
and $\fb_{i,j}$ is the stabilizer of the flag:
\begin{equation}\label{eq:flagij}
\mathcal{F}_{i,j}:=(e_{1}\subset\dots\subset e_{i-1} \subset \underbrace{e_{i}+e_{n}}_{i}\subset e_{i+1}\subset\dots\subset e_{j-1}\subset\underbrace{e_{n}}_{j}\subset e_{j}\subset\dots\subset e_{n-1}). 
\end{equation}
In particular, the unique open orbit is $Q_{1,n}$, and it contains the Borel subalgebra 
$$
\fb_{1,n}=\Ad(w_{1})\Ad(u_{\alpha_{1}})s_{\alpha_{2}}\dots s_{\alpha_{n-1}}(\fb_{+})
$$
which stabilizes the flag:
\begin{equation}\label{eq:openflag}
\mathcal{F}_{1,n}:=(e_{1}+e_{n}\subset e_{2}\subset\dots\subset e_{n-1}\subset e_{n}). 
\end{equation}

\end{enumerate}
\end{prop}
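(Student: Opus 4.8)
\textbf{Proof plan for Proposition \ref{prop:typeAflag}.}

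The plan is to verify the four assertions largely by explicit computation with flags and the monoid action, using the known count of $K$-orbits and the Richardson--Springer combinatorics as input. For Part (1), I would invoke the well-known fact (cited in Section \ref{ss:Korbits}, e.g. Proposition 3.1.3 of \cite{CE19}) that the number of $K$-orbits on $\B$ equals the number of irreducible constituents controlling multiplicity-free branching; in the $(\fgl(n),\fgl(n-1))$ case this number is $\binom{n+1}{2}$, and one can also read it off directly from the combinatorial parametrization worked out in Section 4.4 of \cite{CEexp}. Alternatively, Parts (2) and (3) exhibit exactly $n + \binom{n}{2} = \binom{n+1}{2}$ distinct orbits, so (1) follows once (2) and (3) are established.

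For Part (2), I would first check that each $\fb_i = w_i(\fb_+)$ is the stabilizer of the flag $\mathcal{F}_i$ in \eqref{eq:flagi}: since $w_i$ is the cycle $(n,n-1,\dots,i)$, applying $\dot w_i$ to the standard flag $(e_1 \subset \dots \subset e_n)$ permutes the basis vectors so that $e_n$ lands in slot $i$ and $e_i,\dots,e_{n-1}$ are pushed one slot to the right, which is exactly $\mathcal{F}_i$. Then I would show $K\cdot\fb_i$ is closed: a $K = GL(n-1)$-orbit on $\B$ is closed iff the flag it stabilizes is compatible with the decomposition $\C^n = \C^{n-1}\oplus \C e_n$ in the strongest sense, i.e. each subspace of the flag is either contained in $\C^{n-1}$ or is $\C^{n-1} \oplus \C e_n$ intersected appropriately --- equivalently, every step of the flag is $K$-standard. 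The flags $\mathcal{F}_i$ are precisely the $n$ flags where $\langle e_n\rangle$ first appears (combined with the standard flag of $\C^{n-1}$) in position $i$, and these give all closed orbits; distinctness is clear since the flags are visibly different, and that there are exactly $n$ of them matches the general theory (the closed $K$-orbits correspond to the $K$-conjugacy classes of $\theta$-stable Borels, and for this symmetric pair there are $n$). I would also note $\fb_i$ is $\theta$-stable, confirming closedness via the standard criterion.

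For Part (3), the strategy is induction on $j - i$ using the monoid action. Starting from the closed orbit $Q_i$ with representative $\fb_i = \Ad(\dot w_i)(\fb_+)$, I would show that the simple root $\alpha_i$ is complex stable for $Q_i$ and that applying $m(s_{\alpha_i})$ produces the orbit through $\Ad(\dot w_i u_{\alpha_i})(\fb_+)$; the Cayley-type element $u_{\alpha_i}$ from \eqref{eq:cayley} is exactly what implements the passage across the $\PR^1$-fibre in the complex case, and one checks on flags that $\Ad(\dot w_i u_{\alpha_i})(\fb_+)$ stabilizes $\mathcal{F}_{i,i+1}$, i.e. the flag $(e_1 \subset \dots \subset e_{i-1} \subset e_i + e_n \subset e_n \subset e_{i+1} \subset \dots)$. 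Then for each subsequent step, applying $m(s_{\alpha_k})$ for $k = i+1,\dots,j-1$ is in the \emph{complex} case with the root complex stable, and the effect on the representative is right multiplication by $\dot s_{\alpha_k}$ (this is the standard "cross move" --- when $\alpha$ is complex for $Q$ and the relevant $SL(2)$ acts nontrivially, $m(s_\alpha)*Q$ has representative $\Ad(\dot v \dot s_\alpha)(\fb_+)$); on the flag side, $\dot s_{\alpha_k}$ transposes the entries in slots $k$ and $k+1$, which moves the entry $e_n$ one slot further to the right, yielding $\mathcal{F}_{i,j}$ as in \eqref{eq:flagij}. This gives the representative $\hat v = \dot w_i\, u_{\alpha_i}\, \dot s_{\alpha_{i+1}}\cdots \dot s_{\alpha_{j-1}}$ of \eqref{eq:bij}. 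The dimension count: $Q_i$ has some fixed dimension $d_0$ (the dimension of a closed $K$-orbit, which is the same for all $i$), and each complex-stable step raises dimension by $1$, so $\dim Q_{i,j} = d_0 + (j - i - 1)$; since the open orbit $Q_{1,n}$ must have dimension $\dim\B = \binom{n}{2}$ and corresponds to $j - i - 1 = n - 2$, we get $d_0 = \binom{n}{2} - (n-2) = \binom{n-1}{2} + 1$, and hence $\codim Q_{i,j} = \binom{n}{2} - d_0 - (j-i-1) = n - 1 - (j - i)$. Distinctness of the $Q_{i,j}$ follows because their flags $\mathcal{F}_{i,j}$ are pairwise distinct (the positions $i$ and $j$ of the two "special" entries $e_i + e_n$ and $e_n$ are recovered from the orbit), and they are distinct from the closed orbits since $\mathcal{F}_{i,j}$ is not $K$-standard; counting, there are $\binom{n}{2}$ such pairs, so together with Part (2) we have all $\binom{n+1}{2}$ orbits and (1) is confirmed. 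The special case $i=1,\,j=n$ gives the open orbit and the flag \eqref{eq:openflag} by setting $i=1$ and simplifying.

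The main obstacle I anticipate is the bookkeeping in the induction for Part (3): namely, carefully verifying that each root $\alpha_k$ ($i < k < j$) is genuinely in the complex-stable case for the intermediate orbit (not compact or real), and that the monoid move is implemented on the chosen representative precisely by right multiplication by $\dot s_{\alpha_k}$ rather than by some Cayley element or with an extra $L_\alpha$-factor. This requires tracking how $M_{\alpha_k} = $ image of $K \cap P_{\alpha_k}$ sits inside $S_{\alpha_k} \cong PSL(2)$ along the chain; concretely one checks that after the first Cayley move the flag has $e_i + e_n$ and $e_n$ in "generic position" relative to the $\alpha_k$-parabolic so that $K \cap P_{\alpha_k}$ maps onto a Borel (not all) of $S_{\alpha_k}$, giving the complex case, and that the non-open orbit in the fibre is $Q_{i,j}$ itself so the move is the cross move. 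This can be made rigorous by direct matrix computation with the explicit realization, or by appealing to the detailed analysis in Section 4.4 of \cite{CEexp}; I would cite the latter for the combinatorial identities and include only the flag computations needed to pin down the representatives \eqref{eq:bij} and the flags \eqref{eq:flagij}.
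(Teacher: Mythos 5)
The paper offers no proof of this proposition: it states explicitly that the classification is presented ``mostly without proof'' and is taken from the Richardson--Springer analysis in \cite{RS} as worked out for $GL(n)$ in Section 4.4 of \cite{CEexp}. So there is no in-paper argument to compare step by step; your outline is the standard Richardson--Springer induction that those references carry out, and its skeleton --- identify the $n$ closed orbits with the $\theta$-stable (i.e.\ $\C^{n-1}\oplus\C e_n$-split) flags $\mathcal{F}_i$, generate the remaining orbits by the monoid action, and distinguish orbits by the positions of the two non-standard entries of the flag --- is the right one and would yield the proposition.

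There is, however, a concrete error in your case analysis. You assert that $\alpha_i$ is \emph{complex stable} for the closed orbit $Q_i$ and that the Cayley element $u_{\alpha_i}$ implements the move ``in the complex case.'' This contradicts the paper's own Proposition \ref{prop:roottypesforK}(1), which records that $\alpha_{i-1}$ and $\alpha_i$ are \emph{noncompact} for $Q_i$; indeed, for a closed orbit the Borel is $\theta$-stable, so the image $M_\alpha$ of $K\cap P_\alpha$ in $S_\alpha$ is $\theta$-stable and hence is either all of $S_\alpha$ (compact case) or contains a maximal torus (noncompact/real case) --- the complex case cannot occur at a closed orbit. The Cayley element $u_{\alpha_i}$ is the correct implement of the first move precisely \emph{because} that move is noncompact; if the case were genuinely complex stable the move would be realized by $\dot s_{\alpha_i}$ and would land in another closed orbit, so as written your induction is internally inconsistent at its base step. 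The fix is simply to run the first step as a noncompact move and only the subsequent $j-i-1$ steps as complex-stable cross moves by $\dot s_{\alpha_{i+1}},\dots,\dot s_{\alpha_{j-1}}$. Relatedly, your dimension bookkeeping undercounts the dimension-raising moves by one: $m(s_{\alpha_i}),\dots,m(s_{\alpha_{j-1}})$ is $j-i$ moves, each raising dimension by $1$ (including the noncompact one), and the closed orbit $Q_i\cong K/B_{n-1}$ has dimension $\binom{n-1}{2}$, not $\binom{n-1}{2}+1$ as your formula for $d_0$ forces. The two slips cancel, so your final codimension $n-1-(j-i)$ is correct, but the intermediate assertions are false and should be repaired before this can stand as a proof.
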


\par\noindent The following diagram indicates the $K$-orbits on $\B_4$,
together with the order relations given by closure.  For general $n$, the diagram has
the same shape, with $n$ closed orbits, $n-1$ orbits one line below, and so forth, until we have one orbit on the last line.

\hspace{11em}
\begin{tikzpicture}  
  [scale=1.3,auto=center,every node/.style={circle,fill=white!20}] 
\node (a1) at (5,4) {$Q_1$};
\node (a2) at (6,4) {$Q_2$};
\node (a3) at (7,4) {$Q_3$};
\node (a4) at (8,4) {$Q_4$};
\node (a5) at (5.5,3) {$Q_{1,2}$};
\node (a6) at (6.5,3) {$Q_{2,3}$};
\node (a7) at (7.5,3) {$Q_{3,4}$};
\node (a8) at (6,2) {$Q_{1,3}$};
\node (a9) at (7,2) {$Q_{2,4}$};
\node (a10) at (6.5,1) {$Q_{1,4}$};

\draw (a1) -- (a5); 
  \draw (a2) -- (a5);  
  \draw (a2) -- (a6);  
  \draw (a3) -- (a6);  
  \draw (a3) -- (a7);  
  \draw (a4) -- (a7);  
  \draw (a5) -- (a8);  
\draw (a6) -- (a8);  
\draw (a6) -- (a9); 
\draw (a7) -- (a9);  
\draw (a8) -- (a10);  
\draw (a9) -- (a10);  
 
\end{tikzpicture}

We now classify the orbits of $K=SO(n-1)$ on the flag variety for $\fg = \fso(n)$.  When $n=2l+1$ is odd, there is a bijection between $\B_{\fso(n)}$ and the set of all 
maximal isotropic flags in $\C^n$, i.e., partial flags of the form
$$
\mathcal{I}\mathcal{F}=( V_{1}\subset\dots\subset V_{i}\subset\dots\subset V_{l}),
$$
with $\dim(V_i)=i$ and $B(u,w)=0$ for all $u,w \in V_i.$   When $n=2l$ is
even, the space of maximal isotropic flags in $\C^n$ consists of two $SO(2n)$-orbits.   To distinguish them, we choose a maximal isotropic flag
$U_1 \subset \dots \subset U_l$, where $U_i$ is the span of the standard
basis $e_1, \dots, e_i$ from Section \ref{ss:realization}.   Then we identify
$\B_{\fso(n)}$ with the maximal isotropic flags $V_1 \subset \dots \subset V_l$
such that $\dim(V_l \cap U_l) \equiv l \pmod{2}.$ 
  As before, if
$V_i$ is the span of vectors $\{ v_1, \dots, v_i \},$ we write the above maximal isotropic  flag
as
$$
\mathcal{I}\mathcal{F}=(v_1 \subset v_2 \subset \dots \subset v_{[\frac{n}{2}]}).
$$

We consider the type $B$ and $D$ cases separately, and use the enumeration
of simple roots in \cite{GW}.

\begin{prop} \label{prop_sooddflag}
Let $\fg=\fso(2l+1)$ and $\fk=\fso(2l)$.
\begin{enumerate}  
\item There are exactly  $l+2$ $K$-orbits on the flag variety $\B$ of $\fg$.
\item We let $\fb_-:=s_{\alpha_l}(\fb_+)$.    Exactly two $K$-orbits on $\B$ are
closed, and they are $Q_+:= K\cdot \fb_+$ and $Q_-:=K\cdot \fb_-.$
Further, $m(s_{\alpha_l})*Q_+ = m(s_{\alpha_l})* Q_-=K\cdot \Ad(u_{\alpha_l})\fb_+.$
\item The non-closed orbits are of the form $$Q_i:=m(s_{\alpha_{i+1}})* m(s_{\alpha_{i+2}})
\cdots m(s_{\alpha_{l-1}}) *m(s_{\alpha_l})* Q_+$$ for $i=0, \dots, l-1$.
Moreover, the codimension of $Q_i$ in $\B$ is $i$.    Further, 
\begin{equation}\label{eq:sooddboreli}
\fb_i = \Ad(\hat{v})(\fb_+) \in Q_i, \ \hat{v} := u_{\alpha_{l}} \dot{s}_{\alpha_{l-1}}  \dot{s}_{\alpha_{l-2}} \dots \dot{s}_{\alpha_{i+1}}.
\end{equation}
In particular, the unique open $K$-orbit contains the Borel 
subalgebra 
\begin{equation}\label{eq:openborelodd}
\fb_{0}=\Ad(u_{\alpha_{l}})s_{\alpha_{l-1}}  s_{\alpha_{l-2}} \dots s_{\alpha_{1}}  (\fb_+).
\end{equation}
\end{enumerate}
\end{prop}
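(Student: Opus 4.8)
The plan is to follow the Richardson–Springer framework for $K$-orbits on the flag variety, using the known enumeration for $SO(2l)$ acting on $\B_{\fso(2l+1)}$ together with the monoid action recalled in Section \ref{ss:Korbits}. First I would fix the standard Borel $\fb_+$ with simple roots $\alpha_1,\dots,\alpha_l$ as in \cite{GW}. The key observation is that, since $\fk\cong\fso(2l)$ shares the Cartan $\fh_{2l+1}=\fh_{2l}$, the orbit $Q_+=K\cdot\fb_+$ is closed; I would verify this directly by checking that $\fk\cap\fb_+$ is a Borel subalgebra of $\fk$ (equivalently, that the $K$-orbit through $\fb_+$ has minimal dimension), which amounts to a root-count: the simple roots $\alpha_1,\dots,\alpha_{l-1}$ restrict to simple roots of $\fso(2l)$ while $\alpha_l$ is ``noncompact'' in the sense that the corresponding $SL(2)$ meets $K$ in a torus. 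This gives part (2): $Q_+$ and $Q_-=K\cdot\fb_-$ (with $\fb_-=s_{\alpha_l}(\fb_+)$) are the two closed orbits, and $\alpha_l$ being noncompact imaginary for both forces $m(s_{\alpha_l})*Q_+=m(s_{\alpha_l})*Q_- = K\cdot\Ad(u_{\alpha_l})\fb_+$ via the Cayley transform $u_{\alpha_l}$ of Notation \ref{nota:cayley}; here I would use that the two preimages in the $\PR^1$-bundle $p_{\alpha_l}$ over the parabolic $p_{\alpha_l}(\fb_+)$ consist of these two closed orbits together with the open one.

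Next I would analyze the roots $\alpha_1,\dots,\alpha_{l-1}$ for the orbit $Q:=m(s_{\alpha_l})*Q_+$ and its iterated images. The point is that after the Cayley transform, the simple roots $\alpha_{l-1},\alpha_{l-2},\dots,\alpha_1$ become \emph{complex} roots (in the sense of Definition \ref{dfn:roottype}), and in fact complex stable, so each application $Q\mapsto m(s_{\alpha_i})*Q$ strictly increases the dimension by one. This is the mechanism producing the chain $Q_{l-1}\subset Q_{l-2}\subset\cdots\subset Q_0$ with $Q_i := m(s_{\alpha_{i+1}})*m(s_{\alpha_{i+2}})*\cdots*m(s_{\alpha_l})*Q_+$; concretely one checks that the element $\hat v = u_{\alpha_l}\dot s_{\alpha_{l-1}}\cdots\dot s_{\alpha_{i+1}}$ conjugates $\fb_+$ into a representative of $Q_i$, which is a direct computation using \eqref{eq:cayley} and the braid/commutation relations among the $\dot s_{\alpha_j}$. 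Since $Q_{l-1}$ has codimension $l-1$ over the closed orbit... wait — more carefully, the codimension of $Q_i$ in $\B$ equals $i$: this follows because the open orbit $Q_0$ is dense and each step down the chain (decreasing the index) raises dimension by one, with $Q_{l-1}$ one step above the Cayley-transformed orbit. Formula \eqref{eq:openborelodd} is then the special case $i=0$.

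Finally, for the count in part (1): I would argue that the orbits exhibited — the two closed orbits $Q_+,Q_-$ and the $l$ orbits $Q_0,\dots,Q_{l-1}$ — exhaust $K\backslash\B$. One way is to invoke the known formula for the number of $SO(2l)$-orbits on $\B_{\fso(2l+1)}$ from \cite{RS} or the worked examples in \cite{CEspherical} (referenced in Section \ref{ss:Korbits}), giving $l+2$; alternatively, a dimension/parity argument shows there is no room for further orbits, since every orbit is obtained from a closed one by the monoid action and the above exhausts all sequences of monoid operators that produce new orbits (the roots $\alpha_i$ for $i<l$ are compact or already-stable once we leave the chain). The main obstacle I anticipate is the bookkeeping in the second step: verifying that the conjugate $\Ad(\hat v)\fb_+$ genuinely lies in the open $K$-orbit of $p_{\alpha_{i}}^{-1}p_{\alpha_i}$ of the previous orbit (i.e. that each $\alpha_i$ is complex \emph{stable}, not unstable, for the relevant orbit), which requires carefully tracking how the Cayley transform $u_{\alpha_l}$ interacts with the $\fsl(2)$-triples for $\alpha_{l-1},\dots,\alpha_1$ and confirming the $K\cap P_{\alpha_i}$-orbit structure on each $\PR^1$; the orthogonality realization in Section \ref{ss:realization} and Remark \ref{rem:sominus2} should make these computations tractable.
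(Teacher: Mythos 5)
The paper does not actually prove this proposition: it is stated in Section \ref{ss:Korbits} ``mostly without proof,'' with the $SO(n)$ case deferred to Section 2.7 of \cite{CEspherical} and the general Richardson--Springer machinery of \cite{RS}. Your outline follows exactly that intended route --- identify $\alpha_l$ as noncompact for the two closed orbits $Q_\pm$ with $K\cap S_{\alpha_l}$ a torus (so both Cayley-transform to the same orbit), then build the chain $Q_{l-1},\dots,Q_0$ by successive complex-stable roots and exhaust $K\backslash\B$ by the fact that every orbit is reached from a closed one by the monoid action --- and the root-type bookkeeping you flag as the remaining work is consistent with what the paper later records in Proposition \ref{prop:roottypesforK}(2). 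One small slip: $Q_{l-1}=m(s_{\alpha_l})*Q_+$ \emph{is} the Cayley-transformed orbit (codimension $l-1$, one step above the closed orbits of codimension $l$), not one step above it; your stated conclusion $\codim Q_i=i$ is nevertheless correct.
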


\begin{prop}\label{prop_soevenflag} 
Let $\fg=\fso(2l)$ and $\fk=\fso(2l-1)$.
\begin{enumerate}
\item There are exactly $l$ $K$-orbits in the flag variety $\B$ of $\fg$.
\item  The orbit $Q_+:= K\cdot \fb_+$ is the only closed $K$-orbit.
\item Let $$Q_i:=m(s_{\alpha_i}) \dots m(s_{\alpha_{l-1}})*Q_+$$
and let 
\begin{equation}\label{eq:soevenboreli}
\fb_i = \Ad(\hat{v})(\fb_+), \ \hat{v} := {\dot{s}}_{\alpha_{l-1}} {\dot{s}}_{\alpha_{l-2}} \dots  {\dot{s}}_{\alpha_{i}}(\fb_+)\mbox{ for } i=1,\dots, l-1.
\end{equation}
Then $Q_i=K\cdot \fb_i$ has codimension $i-1$ in $\B$.   The distinct $K$-orbits are 
$Q_+, Q_{l-1}, \dots, Q_1$.
 In particular, the unique open orbit is $Q_{1}$ and contains the Borel subalgebra 
\begin{equation}\label{eq:openboreleven}
\fb_{1}=s_{\alpha_{l-1}} s_{\alpha_{l-2}} \dots  s_{\alpha_{1}}(\fb_+).
\end{equation}
\end{enumerate}
\end{prop}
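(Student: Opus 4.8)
The plan is to obtain the orbit classification from an elementary invariant, compute that invariant on the explicit flags attached to the $\fb_i$, and follow the monoid action through those flags to match the two descriptions. Realize $\B=\B_{\fso(2l)}$ as the fixed $SO(2l)$-orbit of maximal isotropic flags $(V_1\subset\cdots\subset V_l)$ in $\C^{2l}$ as in the paragraphs preceding the proposition, and recall from Remark~\ref{rem:sominus2} and \eqref{e:sominus1} that $K=SO(2l-1)$ is the stabilizer in $SO(2l)$ of the anisotropic vector $w:=e_l-e_{-l}$ (so $\beta(w,w)=-2$). Attach to each flag the $K$-invariant $d(V_\bullet):=\min\{\,i:V_i\not\subseteq w^{\perp}\,\}$; since $w\notin V_l=V_l^{\perp}$ one always has $d\le l$, so $d$ takes values in $\{1,\dots,l\}$. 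A standard Witt's-theorem argument shows $d$ is a complete invariant: if $d=j$, then $V_{j-1}$ is an isotropic $(j-1)$-dimensional subspace of the nondegenerate space $w^{\perp}\cong\C^{2l-1}$, on which $K=SO(w^{\perp})$ is transitive; writing $V_j=V_{j-1}\oplus\C(w+y)$ with $y\in w^{\perp}\cap V_{j-1}^{\perp}$, isotropy of $V_j$ forces $\beta(y,y)=2$, so $V_j$ is determined up to the stabilizer of $V_{j-1}$; and the higher steps are unconstrained isotropic extensions, with $V_l$ forced by the chosen component. Hence $K$ has exactly $l$ orbits on $\B$ (this is the $SO(n)$ case of \cite[\S 2.7]{CEspherical}).

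The Borel $\fb_+$ is $\theta$-stable, because $\theta$ acts on $\fh_{2l}^{*}$ by the diagram automorphism interchanging $\alpha_{l-1}$ and $\alpha_l$ (Section~\ref{ss:realization}) and so permutes the positive roots; hence $Q_+=K\cdot\fb_+$ is closed, and its flag $(e_1\subset\cdots\subset e_l)$ has $d=l$ since $\beta(w,e_k)=0$ for $k<l$ but $\beta(w,e_l)=-1$. Setting $\fb_l:=\fb_+$, for $i=1,\dots,l-1$ take the representative in \eqref{eq:soevenboreli} with each $\dot{s}_{\alpha_j}$ transposing $e_j\leftrightarrow e_{j+1}$ and $e_{-j}\leftrightarrow e_{-(j+1)}$; then $\hat v e_i=e_l$, $\hat v e_k=e_{k-1}$ for $i<k\le l$, and $\hat v e_k=e_k$ for $k<i$, so the isotropic flag fixed by $\fb_i=\Ad(\hat v)\fb_+$ is $(e_1\subset\cdots\subset e_{i-1}\subset e_l\subset e_i\subset\cdots\subset e_{l-1})$, which has $d=i$ (since $V_{i-1}\subseteq w^{\perp}$ but $e_l\in V_i$). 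Comparing the flags of $\fb_i$ and $\fb_{i+1}$ shows they agree after deleting the $i$-dimensional step, i.e.\ $p_{\alpha_i}(\fb_i)=p_{\alpha_i}(\fb_{i+1})$, so $\fb_i$ and $\fb_{i+1}$ lie in a common fibre of the $\PR^1$-bundle $p_{\alpha_i}:\B\to\mathcal{P}_{\alpha_i}$.

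Next I would match the monoid chain to these flags by downward induction on $i$, the base case being $\fb_l=\fb_+\in Q_+$. In the inductive step, $\fb_i$ lies in the $\alpha_i$-fibre through $\fb_{i+1}\in Q_{i+1}$ while $d(\fb_i)=i\ne i+1=d(\fb_{i+1})$, so this fibre meets two distinct $K$-orbits and $\alpha_i$ is not compact for $Q_{i+1}$. Checking that $\alpha_i$ is in fact \emph{complex stable} for $Q_{i+1}$ (consistent with the pure-reflection form of the representative in \eqref{eq:soevenboreli}, which carries no Cayley factor), one reads off from the complex case of Section~\ref{ss:Korbits} that $\fb_{i+1}$ is the unique point of the fibre outside $m(s_{\alpha_i})*Q_{i+1}$, so $\fb_i\in m(s_{\alpha_i})*Q_{i+1}=Q_i$. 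Thus $K\cdot\fb_i=Q_i$; the $Q_i$ together with $Q_+$ are the $l$ orbits, separated by $d$, and \eqref{eq:openboreleven} lies in the orbit with $d=1$. I expect this complex-stability verification along the chain --- a Richardson--Springer computation of root types, carried out in \cite[\S 2.7]{CEspherical} --- to be the main technical point; the rest is bookkeeping with Witt's theorem and the explicit flags.

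Finally, a short root-space count gives $\dim(\fk\cap\fb_+)=l(l-1)$ (the positive roots not involving $\eps_l$ have $\theta$-fixed root spaces, and the $2(l-1)$ roots $\eps_r\pm\eps_l$ form $l-1$ $\theta$-swapped pairs), so $\dim Q_+=\dim\fk-l(l-1)=(2l-1)(l-1)-l(l-1)=(l-1)^2$. Since $Q_i=m(s_{\alpha_i})*Q_{i+1}$ with $Q_i\ne Q_{i+1}$, the dimension rule of Section~\ref{ss:Korbits} gives $\dim Q_i=\dim Q_{i+1}+1$, hence $\dim Q_i=(l-1)^2+(l-i)$ and $\codim Q_i=l(l-1)-(l-1)^2-(l-i)=i-1$; in particular $Q_1$ is open. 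And $Q_1,\dots,Q_{l-1}$ have dimension $>(l-1)^2=\dim K/B_K$, so none is a projective homogeneous space and none is closed, whereas $Q_+$, of minimal dimension, is closed; hence $Q_+$ is the unique closed $K$-orbit.
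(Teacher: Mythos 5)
The paper does not actually prove this proposition: Section \ref{ss:Korbits} states that the orbit classifications are presented ``mostly without proof'' and refers to Section 2.7 of \cite{CEspherical}, so any argument here is necessarily an independent route. Yours is a reasonable one, and more elementary than the Richardson--Springer machinery: the invariant $d(V_{\bullet})=\min\{i : V_i\not\subseteq w^{\perp}\}$ with $w=e_l-e_{-l}$ is genuinely $K$-invariant, the Witt argument gives completeness (modulo the transitivity of the flag stabilizer on the ``higher steps,'' which you only sketch but which is standard), the flags fixed by the $\fb_i$ and their $d$-values are computed correctly (and agree with Remark \ref{rem:soflagnotation} at $i=1$), and the dimension count for $Q_+$ is right. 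What this buys is a classification by explicit linear algebra; what it costs is that the statement is phrased in terms of the monoid action, so you still have to match your representatives to the chain $m(s_{\alpha_i})\cdots m(s_{\alpha_{l-1}})*Q_+$.

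That matching is the one step you defer --- verifying that $\alpha_i$ is complex stable for $Q_{i+1}$ --- and it is worth noting that you do not need to outsource it to \cite{CEspherical}: it falls out of your own invariant. A point of the fibre $p_{\alpha_i}^{-1}p_{\alpha_i}(\fb_{i+1})$ is a flag agreeing with that of $\fb_{i+1}$ except in the $i$-th step $W$, which ranges over the $i$-planes with $U_{i-1}\subset W\subset U_i+\C e_l$ (for $i=l-1$, over the $(l-1)$-planes between $U_{l-2}$ and $U_l$). Such a flag has $d=i+1$ exactly when $W\subseteq w^{\perp}$, and since $(U_i+\C e_l)\cap w^{\perp}=U_i$, this happens only for $W=U_i$, i.e.\ only at $\fb_{i+1}$ itself; every other point of the $\PR^1$ has $d=i$ and hence, by completeness of $d$, lies in $K\cdot\fb_i$. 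Therefore $p_{\alpha_i}^{-1}p_{\alpha_i}(Q_{i+1})$ is the union of exactly the two orbits $Q_{i+1}$ and $K\cdot\fb_i$; it is irreducible of dimension $\dim Q_{i+1}+1$ (a $\PR^1$-bundle over the $K$-orbit $p_{\alpha_i}(Q_{i+1})$, onto which $Q_{i+1}$ maps with finite fibres), so its unique open orbit is $K\cdot\fb_i$. This yields $m(s_{\alpha_i})*Q_{i+1}=K\cdot\fb_i$ directly, with the complex stability of $\alpha_i$ as a byproduct rather than an input, and closes the only real gap in your proposal.
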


\begin{rem}\label{rem:soflagnotation}
In case $\fg=\fso(2l)$ or $\fso(2l+1),$ the Borel subalgebra $\fb_+$
corresponds to the maximal isotropic flag $(e_1 \subset e_2 \subset \dots \subset e_l)$.
By Proposition \ref{prop_soevenflag}, the Borel subalgebra
$\fb_1$ in the open $K$-orbit in  $\B_{\fso(2l)}$ corresponds to the maximal isotropic flag
$(e_l \subset e_1 \subset \dots \subset e_{l-1}).$
Similarly, by Proposition \ref{prop_sooddflag}, it follows that the Borel subalgebra
$\fb_0$ in the open $K$-orbit in $\B_{\fso(2l+1)}$ corresponds to the maximal isotropic flag
$(u_{\alpha_l}(e_l) \subset e_1 \subset e_2 \subset \dots \subset e_{l-1}).$
We may choose our root vectors so that  
$\C \cdot u_{\alpha_l}(e_l)=\C \cdot (e_l + \imath\sqrt{2} e_{0} + e_{-l}).$
We consider the $\alpha_l$ root vector $e=E_{l,0} - E_{0,-l}$ and take $f$ to be
the transpose of $e$, and let $x=[e,f]$, and note that $\{ e, x, f \}$
generates the three dimensional subalgebra ${\fs}_{\alpha_l} := \phi_{\alpha_l}(\fsl(2)).$
Let $V_3$ be the span of $e_l, e_{0},$ and $e_{-l}$, and note that
$V_3$ is a representation of ${\fs}_{\alpha_l}$ and is isomorphic to the
adjoint representation via an isomorphism mapping $e_{-l} \to f$,
$e_{0} \to -x$, and $e_l \to e.$  A brief computation using the composed isomorphism $\fsl(2) \to \fso(3) \to V_3$ above then
gives the above formula for $u_{\alpha_l}(e_l)$.
\end{rem}





\section{Structure of $B_{n-1}$-orbits} \label{s:bundlestructure}
 
Recall that $G=GL(n)$ or $SO(n)$ and $K=G_{n-1}$  from Section \ref{ss:realization}.   We show that
$B_{n-1}$-orbits on the flag variety $\B$ of $G$ have a useful bundle structure
and describe the data required to classify orbits.

\subsection{Fibre Bundle Structure} \label{ss:fibrebundle}

Let $Q_{K}=K\cdot \fb \subset \B$ be a $K$-orbit.  Let $R$ be a parabolic
subgroup of $G$ containing $B$ and consider the projection
$\pi:\B \to \mathcal{R} := G/R$.   Then the morphism
\begin{equation}\label{eq:easybundle} 
\phi:K\times_{K\cap R} R/B \to \pi^{-1}\pi(Q_K),\  \ \phi(k,\fb_1) = \Ad(k)\cdot \fb_1,
\end{equation}
is an isomorphism, and the preimage $\phi^{-1}(Q_K)$ is $K\times_{K\cap R} (K\cap R)\cdot \fb.$  This realizes both $\pi^{-1}\pi(Q_K)$ and $Q_K$ as bundles over
$K/K\cap R.$    Let $R=L\cdot U$ be a Levi decomposition of the
parabolic $R$.   Recall that the morphism $R\cdot \fb \to \B_{\fl}$
given by $\fb_1 \mapsto \fb_1 \cap \fl$ is an isomorphism, where $\fb_1 \in R\cdot \fb \cong \B_R.$

We now assume that $R$ is $\theta$-stable.   Then by Theorem 2 of \cite{BH},
there is a $\theta$-stable Levi subgroup $L$ of $R$, $K\cap R$ is a parabolic
subgroup of $K$, and $K\cap R = (K\cap L)\cdot (K\cap U)$ is a Levi decomposition of $K\cap R$.  The group $K\cap R$ then acts on $\B_{\fl}$ through its quotient
$K\cap L$, and it follows that we can identify $Q_K$ with 
$K\times_{K\cap R} (K\cap L)\cdot (\fb \cap \fl).$  Since $K\cap L$ is a 
symmetric subgroup of $L$, $K\cap L$ acts with finitely many orbits
on $\B_{\fl}.$

We now prove our basic result about $K$-orbits on $\B$ for the multiplicity
free  pairs $(G,K)$.

 \begin{thm}\label{thm:specials}
 Let $Q_{K}=K\cdot \fb$ be a $K$-orbit on $\B$.  Then there exists a $\theta$-stable parabolic subgroup  $R \supset B$ with $\theta$-stable 
 Levi decomposition $R=L\cdot U$ such that the pair $(\fl,\fl\cap\fk)$ is of the same type as $(\fg,\fk)$ up to abelian factors in the center of 
$\fl$ and the $K\cap L$-orbit 
 $(K\cap L)\cdot(\fb\cap\fl)$ is open in the flag variety $\B_{\fl}$.  Thus, 
  \begin{equation}\label{eq:QKbundle}
 Q_{K}\cong K\times_{K\cap R} (K\cap L)\cdot (\fb\cap \fl).
 \end{equation}  
 is a $K$-homogenous fibre bundle over the partial flag variety $K/K\cap R $ with  fibre the open $K\cap L$-orbit on $\B_{\fl}$.
 \end{thm}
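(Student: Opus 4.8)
The plan is to reduce to the explicit orbit representatives of Propositions~\ref{prop:typeAflag}, \ref{prop_sooddflag} and~\ref{prop_soevenflag} and to write down $R$ by hand in each case. The reduction is immediate: since $\theta$ fixes $K$ pointwise, conjugation by $k\in K$ sends a $\theta$-stable parabolic together with a $\theta$-stable Levi decomposition to another such datum, and conjugates the pair $(\fl,\fl\cap\fk)$ and the orbit $(K\cap L)\cdot(\fb\cap\fl)$ $K$-equivariantly (using $\Ad(k)\fk=\fk$); so it is enough to produce $R$ for one Borel $\fb$ in each orbit. For any $\theta$-stable $R\supset B$ the isomorphism $Q_K\cong K\times_{K\cap R}(K\cap L)\cdot(\fb\cap\fl)$ of~\eqref{eq:QKbundle} already follows from the discussion preceding the theorem: \eqref{eq:easybundle} identifies $Q_K$ with $K\times_{K\cap R}(K\cap R)\cdot\fb$, Theorem~2 of~\cite{BH} provides the $\theta$-stable Levi $L$ and the decomposition $K\cap R=(K\cap L)\cdot(K\cap U)$ of the parabolic $K\cap R$ of $K$, and $R\cdot\fb\cong\B_{\fl}$ via $\fb_1\mapsto\fb_1\cap\fl$ intertwines the $K\cap R$-action with the $K\cap L$-action. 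So the real content is to choose $R$ so that in addition $(\fl,\fl\cap\fk)$ has the type of $(\fg,\fk)$ up to abelian factors and $(K\cap L)\cdot(\fb\cap\fl)$ is open in $\B_{\fl}$.

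For a closed $K$-orbit the representative Borel is $\fb=\Ad(\dw)\fb_+$ with $w$ in the Weyl group ($w=w_i$ in type $A$; $w\in\{e,s_{\alpha_l}\}$ for $\fso(2l+1)$; $w=e$ for $\fso(2l)$). In types $A$ and $B$ the element ($t$, resp.\ $\sigma_{2l+1}$) defining $\theta$ is diagonal, so $\dw^{-1}t\dw$ is diagonal, hence $\Ad(\dw^{-1}t\dw)$ preserves $\fb_+$ and $\theta=\Ad(t)$ preserves $\fb=\Ad(\dw)\fb_+$; in type $D$, $\theta$ is the diagram automorphism, $w=e$, and $\theta$ preserves $\fb_+$. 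In each case take $R=B$, $L=T$, $U$ the unipotent radical of $B$: then $\B_{\fl}$ is a point, $(K\cap L)\cdot(\fb\cap\fl)$ is that (vacuously open) point, $(\fl,\fl\cap\fk)=(\ft,\ft\cap\fk)$ is abelian, and~\eqref{eq:QKbundle} reads $Q_K\cong K/(K\cap B)$.

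For a non-closed orbit I use that it is obtained from a closed one by applying $m(s_\beta)$ over a consecutive string of simple roots, together with the Borel representatives~\eqref{eq:bij}, \eqref{eq:sooddboreli}, \eqref{eq:soevenboreli}, each of the form $\Ad(\hv)\fb_+$. For $Q_{i,j}$ in type $A$ I take $R$ to be the $\Ad(\dw_i)$-conjugate of the standard parabolic whose Levi has simple roots $\alpha_i,\dots,\alpha_{j-1}$; in the orthogonal cases I take $R$ to be the standard parabolic whose Levi has simple roots the string $\{\alpha_{i+1},\dots,\alpha_l\}$ (type $B$) or $\{\alpha_i,\dots,\alpha_l\}$ (type $D$). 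One then verifies, in order: (i) $R\supset B$, because the factors of $\hv$ other than the Weyl element $\dw_i$ (absent in the orthogonal cases, and absorbed into the definition of $R$ in type $A$) lie in the Levi of the relevant standard parabolic, so $\fb=\Ad(\hv)\fb_+\subset R$; (ii) $R$ and its Levi $L\supset T$ are $\theta$-stable --- in type $A$ because $R$ stabilizes a flag of coordinate subspaces, which $\theta=\Ad(t)$ preserves; in type $B$ because the diagonal $\sigma_{2l+1}$ fixes every standard parabolic; in type $D$ because $\theta$ is the diagram automorphism and the chosen string is stable under $\alpha_{l-1}\leftrightarrow\alpha_l$; (iii) $(\fl,\fl\cap\fk)$ has the type of $(\fg,\fk)$ up to abelian central summands, because $\fl^{ss}$ acts on the span $\mathcal{V}$ of a set of coordinate vectors and $\theta|_{\mathcal{V}}$ is again of the form $\mathrm{diag}[1,\dots,1,-1]$ (type $A$, with $\mathcal{V}=\langle e_i,\dots,e_{j-1},e_n\rangle$) or $\sigma_{2m+1}$, $\sigma_{2m}$ (orthogonal); and (iv) $(K\cap L)\cdot(\fb\cap\fl)$ is the open $K\cap L$-orbit on $\B_{\fl}$, because intersecting the flag~\eqref{eq:flagij} (resp.\ the isotropic flag associated to~\eqref{eq:sooddboreli}, \eqref{eq:soevenboreli} via Remark~\ref{rem:soflagnotation}) with $\mathcal{V}$ gives precisely $(e_i+e_n\subset e_{i+1}\subset\dots\subset e_{j-1}\subset e_n)$ as in~\eqref{eq:openflag} (resp.\ $(u_{\alpha_l}(e_l)\subset e_{i+1}\subset\dots\subset e_{l-1})$, or $(e_l\subset e_i\subset\dots\subset e_{l-1})$), which Propositions~\ref{prop:typeAflag}, \ref{prop_sooddflag}, \ref{prop_soevenflag} and Remark~\ref{rem:soflagnotation} recognize as the open-orbit flag for the smaller pair. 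The bundle isomorphism~\eqref{eq:QKbundle} then follows as in the first paragraph.

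The delicate step is~(iv) in the orthogonal cases. In type $B$ the representative~\eqref{eq:sooddboreli} carries the Cayley element $u_{\alpha_l}$, and the flag computation relies on the identification $\C\cdot u_{\alpha_l}(e_l)=\C\cdot(e_l+\imath\sqrt{2}\,e_0+e_{-l})$ of Remark~\ref{rem:soflagnotation}; in type $D$ one must also check that the restricted maximal isotropic flag lands in the component of the space of maximal isotropic flags identified with $\B_{\fl}$, i.e.\ the parity condition on the dimension of its intersection with the standard maximal isotropic subspace, which holds because the Borels in play are $\Ad(\dw)\fb_+$ with $w$ permuting the indices $i,\dots,l$ among themselves. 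I expect this case-by-case bookkeeping, rather than any conceptual difficulty, to be the main obstacle.
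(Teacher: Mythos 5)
Your proposal is correct and follows essentially the same route as the paper: you choose exactly the same special parabolics ($\fr_{i,j}=w_i(\fp_{i,j})$ in type $A$, the standard parabolics on the strings $\{\alpha_{i+1},\dots,\alpha_l\}$ resp.\ $\{\alpha_i,\dots,\alpha_l\}$ in types $B$ and $D$) and verify the same list of points (containment of $\fb$, $\theta$-stability, type of $(\fl,\fl\cap\fk)$, openness of the fibre orbit). The only local differences are that the paper disposes of the closed case by citing Springer's result that $B$ is then $\theta$-stable rather than checking it by hand, and in the orthogonal cases it verifies openness of $(K\cap L)\cdot(\fb\cap\fl)$ via the identity $\fb\cap\fl=\Ad(v)\fb_{+,\fl}$ with $v\in L$ and Equations (\ref{eq:openborelodd})--(\ref{eq:openboreleven}), which sidesteps the isotropic-flag parity bookkeeping you flag as the delicate step in type $D$.
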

 
\begin{proof}
If $Q_{K}$ is closed, then $B$ is $\theta$-stable by Corollary 6.6 of \cite{Sp},
and then $\fl$ is abelian so that $\B_{\fl}= \{ \fb \cap \fl \}$ is a point,
and the assertion follows.

When $Q_K$ is not closed, we consider the different types $A, B,$ and $D$
separately.  In type A, by Proposition \ref{prop:typeAflag},
the orbit $Q_{K}=Q_{i,j}=K\cdot\fb_{i,j}$ for some $i, j$ with $1 \le i < j \le n$, and $\fb_{i,j}$ is the Borel subalgebra in Equation (\ref{eq:bij}).  Let $P_{i,j}$ be the standard 
parabolic subgroup of $GL(n)$ which is the stabilizer of the flag
\[
{\Pflag}_{i,j} := (e_1 \subset \dots \subset e_{i-1} \subset \{e_i, \dots, e_j \}\subset e_{j+1} \subset\dots\subset e_n)
\]
and corresponds to the choice of simple roots $\{ \alpha_i, \dots, \alpha_{j-1} \}.$    Recall the cycle  $w_i = (n, n-1, \dots i)$ from 
Proposition \ref{prop:typeAflag}(2) and let $\fr =\fr_{i,j}:={w}_{i}(\fp_{i,j})$.  
Then $\fr_{i,j}$ is the Lie algebra stabilizer of 
 the partial flag $w_{i}({\Pflag}_{i,j})$ which is 
\begin{equation}\label{eq:rijflag}
\mathcal{R}_{i,j}:=(e_{1}\subset \dots\subset e_{i-1}\subset\{e_{i},e_{i+1},\dots, e_{j-1}, e_{n}\}\subset e_{j}\subset e_{j+1}\subset \dots\subset e_{n-1}.)
\end{equation}
By Proposition \ref{prop:typeAflag}(3), 
 $\fb_{i,j}$ is the stabilizer of the full
flag in Equation (\ref{eq:flagij}).  It follows that $\fb_{i,j}$ also stabilizes the partial flag $\mathcal{R}_{i,j}$ in (\ref{eq:rijflag}), so that $\fb_{i,j}$ is in the parabolic $\fr_{i,j}.$  Since the Cartan subalgebra $\fh_n$ is contained in $\fr_{i,j}$, it follows that $\fr_{i,j}$ is $\theta$-stable.  Further, 
$\fr_{i,j}$ has Levi decomposition $\fr_{i,j}=\fl\oplus\fu$, where $\fl\cong \fgl(1)^{i-1}\oplus \fgl(j-i+1)\oplus \fgl(1)^{n-j}.$  
It follows from (\ref{eq:rijflag}) that $\fr_{i,j}\cap\fk$ is a standard parabolic subalgebra of $\fk$ with Levi factor 
$\fl\cap\fk \cong \fgl(1)^{i-1}\oplus \fgl(j-i)\oplus \fgl(1)^{n-j}.$

To prove the Theorem when $\fg=\fgl(n)$, it remains to show that $(K\cap L)\cdot (\fb_{i,j}\cap\fl)$ is open in the flag variety of $\fl$.  In 
the Levi decomposition of $\fr_{i,j}$ above, $\fgl(j-i+1)$ is identified with 
endomorphisms of the span of $\{ e_{i},\dots, e_{j-1}, e_{n}\}.$  Observe that 
$\fb_{i,j}\cap \fgl(j-i+1)$ is the stabilizer of the flag 
$$
(e_{i}+e_{n}\subset e_{i+1}\subset \dots\subset e_{j-1}\subset e_{n})
$$
in $\fgl(j-i+1).$ By 
part (3) of Proposition \ref{prop:typeAflag}, the point $\fb_{i,j}\cap\fgl(j-i+1)$ is in the open
$K\cap GL(j-i+1)$-orbit on $\B_{\fgl(j-i+1)}$.  
Hence, $\fb_{i,j}\cap\fl$ is in the open $K\cap L$-orbit on 
the flag variety $\B_{\fl}$ of $\fl$.

For the type B case,  the relevant symmetric pair is 
$(\fso(2l+1),\fso(2l))$.  Recall from Proposition \ref{prop_sooddflag}
that since $Q_K$ is not closed, $Q_K=Q_i$ with $i<l$ where
$Q_i=K\cdot \fb_{i}=\Ad(u_{\alpha_{l}})s_{\alpha_{l-1}}\dots s_{\alpha_{i+1}}(\fb_{+})$.  
Let $\fr_i\subset \fg$ be the standard parabolic subalgebra generated by $\fb_{+}$ and the
negative simple root spaces $\fg_{-\alpha_{l}}, \fg_{-\alpha_{l-1}},\dots, \fg_{-\alpha_{i+1}}.$  Thus, 
\begin{equation}\label{eq:Levi2}
\fr_i=\fl\oplus\fu\mbox{ with } \fl\cong\fso(2(l-i)+1) \oplus \fgl(1)^{i}.
\end{equation}
Note that $\fr_i$ is $\theta$-stable and that $\theta|_{[\fl,\fl]}=\theta_{2(l-i)+1}$.  
Thus, $\fr_i\cap\fk$ is a parabolic subalgebra of $\fk$ with Levi 
factor:
\begin{equation}\label{eq:anotherLevi}
\fl\cap\fk\cong \fso(2(l-i)) \oplus \fgl(1)^{i},
\end{equation}
so that $K\cap L\cong SO(2(l-1))\times GL(1)^{i}$.   To see that $\fb_i\subset \fr_i$, note that we can choose the representative $\dot{s}_{\alpha_{j}}$ of $s_{\alpha_{j}}$ so that $\dot{s}_{\alpha_{j}}\in L$ for $j=i+1,\dots, l$, and 
$u_{\alpha_{l}}\in L$ by Equation (\ref{eq:cayley}).  Thus, the element 
\begin{equation}\label{eq:v}
v:=u_{\alpha_{l}}\dot{s}_{\alpha_{l-1}}\dots \dot{s}_{\alpha_{i+1}}\in L\subset R.  
\end{equation}
Hence, $\fb_{i}=\Ad(v)\fb_{+}\subset \Ad(v)\fr_i=\fr_i$.

It remains to show that $(K\cap L)\cdot (\fb\cap\fl)$ can be identified with the open 
$K\cap L$-orbit in the flag variety $\B_{\fl}$.  
Note that $\fb_{+}\cap \fl$ can be identified with the standard Borel 
subalgebra $\fb_{+,\fl}$ of upper triangular matrices 
in $\fl$.  Since the element $v$ in Equation (\ref{eq:v}) is in $L$, we have:
$$
\fb\cap\fl=(\Ad(v)\fb_{+})\cap\fl=\Ad(v)(\fb_{+}\cap\fl)=\Ad(v)\fb_{+,\fl}.  
$$
It follows from Equations (\ref{eq:openborelodd}) and (\ref{eq:v})
 that $\Ad(v)\fb_{+,\fl}\subset \B_{\fl}$ is a representative 
of the open $K\cap L$-orbit on $\B_{\fl}$.

The type D case is similar to type B. 
In this setting, $\fg=\fso(2l)$ and $\fk=\fso(2l-1)$, and since $Q_K$ is
a $K$-orbit that is not closed, then by Proposition
\ref{prop_soevenflag}, $Q_{K}=Q_i$ has codimension $i-1<l-1$.  
By part (3) of Proposition \ref{prop_soevenflag}, we can 
take $\fb=\fb_{i}=s_{\alpha_{l-1}}s_{\alpha_{l-2}}\dots s_{\alpha_{i}}(\fb_{+})$. 
Let $\fr_i\subset \fg$ be the standard parabolic subalgebra generated by $\fb_{+}$ and the
negative simple root spaces $\fg_{-\alpha_{l}}, \fg_{-\alpha_{l-1}},\dots, \fg_{-\alpha_{i}}.$  
Then $\fr_i$ has the Levi decomposition:
\begin{equation}\label{eq:Levi1}
\fr_i=\fl\oplus\fu\mbox{ with } \fl\cong\fso(2(l-i)+2) \oplus \fgl(1)^{i-1}.
\end{equation}
We claim that $\fr_i$ is $\theta$-stable.  Indeed, we saw in Section \ref{ss:realization} that $\theta(\eps_{l})=-\eps_{l}$ 
and $\theta(\eps_{k})=\eps_{k}$ for $k\neq l$.  It follows that $\theta(\alpha_{i})=\alpha_{i}$  for $i=1,\dots, l-2$  and that $\alpha_{l-1}$ and $\alpha_{l}$
are complex $\theta$-stable with $\theta(\alpha_{l-1})=\alpha_{l}$.  It then follows easily that $\fr_i$ has Levi decomposition (\ref{eq:Levi1}) and that 
$\theta|_{\fl_{ss}}=\theta_{2(l-i)+2}$, whence $\fl^{\theta}=\fk\cap\fl\cong \fso(2(l-i)+1)\oplus \fgl(1)^{i-1}$, and
$(L^{\theta})^{0}=K\cap L\cong SO(2(l-i)+1)\times GL(1)^{i-1}$.  The remainder of the proof proceeds as in the previous case.

\end{proof}

\begin{rem}\label{r:bhanalogue}
In Proposition 12 of the paper \cite{BH}, Brion and Helminck prove a closely related result which in the multiplicity free case, describes $K$-orbits on $\B$
as bundles over a closed $K$-orbit with fibre an open orbit in a smaller flag
variety.  In the orthogonal cases, our result coincides with the result of \cite{BH},
but in the general linear case, our bundle is in many cases different from theirs.
\end{rem}

\begin{rem}\label{nota:special}
We call a parabolic of $\fg$ a \emph{special parabolic subalgebra} if it is one of the representatives of the closed $K$-orbits given in Part (2) of Propositions 
\ref{prop:typeAflag}, \ref{prop_sooddflag}, and \ref{prop_soevenflag} or is one of the parabolic subalgebras $\fr_{i,j}\supset\fb_{i,j}$ and $\fr_{i}\supset \fb_{i}$ constructed in 
the proof of Theorem \ref{thm:specials}.  We also call the corresponding parabolic subgroups the {\it special parabolic subgroups} of $G$.  Note that
for $\fr$ a special parabolic subalgebra of $\fg$, the standard Borel subalgebra $\fb_{n-1}$ of $\fk$ is in $\fr$.   Hence, $\fr\cap\fk$ is a standard parabolic subalgebra of
$\fk$.
\end{rem}

\begin{rem}\label{r:typeC}
Theorem \ref{thm:specials}  fails for other real rank one symmetric pairs.  
For example, if $\fg=\fsp(6)$, and $\fk=\fsp(4)\oplus\fsp(2)$, then there exists a $K$-orbit $Q_{K}=K\cdot \fb$ where 
the minimal $\theta$-stable parabolic subalgebra containing $\fb$ is all of $\fg$.  However, $Q_{K}$ is not open.

\end{rem}

For $\theta$-stable parabolics $R$, $K\cap R$ is a parabolic of $K$ and
we let $W_{K}^{K\cap R}\subset W_K$ be  the representatives of shortest length for the cosets in $W_{K}/W_{K\cap R}$. 
Now we use the fibre bundle structure of a $K$-orbit $Q_{K}$ given in (\ref{eq:QKbundle}) to 
describe the $B_{n-1}$-orbits $Q$ contained in $Q_{K}$ as fibre bundles.

 \begin{thm}\label{thm:Bkbundles}
 Let $Q_{K}=K\cdot\fb$ be a $K$-orbit in $\B$.  
 Let $\fr\supset\fb$ be a special parabolic subalgebra, and let $R\subset G$ be the 
 corresponding parabolic subgroup with $\theta$-stable Levi decomposition
$R=LU$.  
 Suppose $Q$ is a $B_{n-1}$-orbit contained in $Q_{K}$.  
 Then $Q$ fibres over a $B_{n-1}$-orbit in the partial flag variety $K/K\cap R$ of $R$ with 
 fibre isomorphic to a $B_{n-1}\cap L$-orbit contained in the open $K\cap L$ -orbit of the flag
 variety $\B_{\fl}$ of $\fl$.  More precisely,  
 \begin{equation}\label{eq:Bkbundle}
 Q\cong B_{n-1}\times_{B_{n-1}\cap w(R) } (B_{n-1}\cap w(L))\cdot \Ad(\dotw)\fb,
 \end{equation}
 where $w \in W_{K}^{K\cap R}$,
 \begin{equation}\label{eq:isBorel}
 B_{n-1}\cap w(L)=w(B_{n-1}\cap L)
 \end{equation}
 is a Borel subgroup of $K\cap w(L)$, and $\Ad(\dotw)\fb$ is contained 
 in the open $K\cap w(L)$-orbit in the flag variety $\B_{w(R)}\cong \B_{w(L)}$ of $R$.  
 
 Further, we have a one-to-one correspondence:
 \begin{equation}\label{eq:correspondence}
 B_{n-1}\backslash \mathcal{B}\leftrightarrow \displaystyle\coprod_{R \mbox{ special}} \left(B_{n-1}\backslash K/K\cap R \times B_{n-1}\cap L \backslash \tilde{Q}_{K\cap L}\right ),
\end{equation}
where $\tilde{Q}_{K\cap L}$ is the open $K\cap L$-orbit on $\B_{\fk\cap\fl}$ (see Notation \ref{nota:orbitnotation}).  
 \end{thm}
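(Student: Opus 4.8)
The plan is to combine the $K$-bundle description of $Q_K$ from Theorem \ref{thm:specials} with the standard bundle-over-an-orbit principle: if a group $H$ acts on varieties $X,Y$, $\pi\colon X\to Y$ is $H$-equivariant, and $\mathcal{O}=H\cdot x_0$ is an $H$-orbit with $x_0\in\pi(X)$, then $Q\mapsto Q\cap\pi^{-1}(x_0)$ is a bijection from the set of $H$-orbits $Q\subseteq\pi^{-1}(\mathcal{O})$ onto the set of $H_{x_0}$-orbits on $\pi^{-1}(x_0)$, and each such $Q$ satisfies $Q\cong H\times_{H_{x_0}}(Q\cap\pi^{-1}(x_0))$. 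I would apply this with $H=B_{n-1}$, $X=Q_K$, and $\pi\colon\B\to K/K\cap R$ the projection attached to the special parabolic $R$ corresponding to $Q_K$. By Remark \ref{nota:special}, $\fr\cap\fk$ is a standard parabolic of $\fk$ containing the standard Borel $\fb_{n-1}$, so $K\cap R$ is a standard parabolic of $K$, $B_{n-1}$ is a Borel of $K\cap R$, and $B_{n-1}\cap L$ is the standard Borel of the standard Levi $K\cap L$; the $B_{n-1}$-orbits on $K/K\cap R$ are then the cells $\mathcal{O}_w=B_{n-1}\cdot\dotw(K\cap R)$ indexed by $w\in W_{K}^{K\cap R}$. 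By Theorem \ref{thm:specials}, $\pi|_{Q_K}\colon Q_K\to K/K\cap R$ is the $K$-bundle $K\times_{K\cap R}\tilde{Q}_{K\cap L}$ whose fibre over $e(K\cap R)$, identified with $\B_{\fl}$ via $\fb_1\mapsto\fb_1\cap\fl$, is the open $(K\cap L)$-orbit $\tilde{Q}_{K\cap L}$.

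Now fix a $B_{n-1}$-orbit $Q\subseteq Q_K$ and let $w\in W_{K}^{K\cap R}$ be the element with $\pi(Q)=\mathcal{O}_w$; put $x_0=\dotw(K\cap R)$. Since $\pi^{-1}(x_0)=\B_{w(R)}$ meets $Q$ nontrivially, I may choose the representative $\fb$ of $Q_K$ so that $\fb\subseteq\fr$ and $\Ad(\dotw)\fb\in Q$ (namely, $\Ad(\dotw)^{-1}(Q\cap\pi^{-1}(x_0))$ is a nonempty subset of $\B_{\fr}$; pick $\fb$ in it), which leaves $R$ the special parabolic of $Q_K$. Because $\dotw\in N_K(T_K)\subseteq G^\theta$, the parabolic $w(R)$ is $\theta$-stable with $\theta$-stable Levi decomposition $w(R)=w(L)w(U)$, $K\cap w(R)=\Ad(\dotw)(K\cap R)$ is a parabolic of $K$ with Levi $K\cap w(L)$ and unipotent radical $K\cap w(U)$, and $\mathrm{Stab}_{B_{n-1}}(x_0)=B_{n-1}\cap\dotw(K\cap R)\dotw^{-1}=B_{n-1}\cap w(R)$. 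Translating Theorem \ref{thm:specials} by $\Ad(\dotw)$, $Q_K$ meets $\pi^{-1}(x_0)=\B_{w(R)}\cong\B_{w(L)}$ exactly in the open $(K\cap w(L))$-orbit $\Ad(\dotw)\tilde{Q}_{K\cap L}$, which contains $\Ad(\dotw)\fb$. By the bundle-over-an-orbit principle, $Q\cap\pi^{-1}(x_0)$ is a single $(B_{n-1}\cap w(R))$-orbit and $Q\cong B_{n-1}\times_{B_{n-1}\cap w(R)}(Q\cap\pi^{-1}(x_0))$. To obtain \eqref{eq:Bkbundle} I would then show that $B_{n-1}\cap w(R)$ acts on $\B_{w(L)}$ through its image $B_{n-1}\cap w(L)$, and that $B_{n-1}\cap w(L)=w(B_{n-1}\cap L)$ is a Borel of $K\cap w(L)$. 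The identity $w(B_{n-1}\cap L)=B_{n-1}\cap w(L)$ is a root-space computation: $W_{K\cap R}=W_{K\cap L}$ and $w$ is shortest in $wW_{K\cap L}$, so $w$ preserves the signs relative to $\fb_{n-1}$ of all roots of $\fl\cap\fk$; hence $\Ad(\dotw)(\fb_{n-1}\cap\fl)$ and $\fb_{n-1}$ contain the same root spaces of $w(\fl)$, and since $B_{n-1}\cap L$ is a Borel of the connected group $K\cap L$, it follows that $B_{n-1}\cap w(L)$ is a Borel of $K\cap w(L)$. The image of $B_{n-1}\cap w(R)$ under $w(R)\to w(R)/w(U)=w(L)$ (through which the action on $\B_{w(L)}$ factors, as $w(U)$ acts trivially) is a connected solvable group — using that closed subgroups of unipotent groups are connected in characteristic zero and that $K\cap w(L)$ is connected in every case of Theorem \ref{thm:specials} — containing the Borel $B_{n-1}\cap w(L)$, hence equal to it. As $\Ad(\dotw)\fb$ lies in the open $(K\cap w(L))$-orbit, on which $B_{n-1}\cap w(L)$ acts with finitely many orbits, we get $Q\cap\pi^{-1}(x_0)=(B_{n-1}\cap w(L))\cdot\Ad(\dotw)\fb$, which is precisely \eqref{eq:Bkbundle}, and \eqref{eq:isBorel} has been checked along the way.

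For the correspondence \eqref{eq:correspondence}: $\B$ is the disjoint union of its finitely many $K$-orbits, each $K$-orbit $Q_K$ has a well-defined special parabolic $R$, and the analysis above shows the $B_{n-1}$-orbits inside a fixed $Q_K$ are classified by the pair consisting of their image $\mathcal{O}_w\in B_{n-1}\backslash K/K\cap R$ and the $(B_{n-1}\cap w(R))$-orbit $Q\cap\pi^{-1}(x_0)$ on the open $(K\cap w(L))$-orbit of $\B_{w(L)}$. Transporting the latter by $\Ad(\dotw)^{-1}$, which intertwines the $(B_{n-1}\cap w(L))$-action on $\B_{w(L)}$ with the $(B_{n-1}\cap L)$-action on $\B_{\fl}$ and carries $\Ad(\dotw)\tilde{Q}_{K\cap L}$ onto $\tilde{Q}_{K\cap L}$, yields a $(B_{n-1}\cap L)$-orbit contained in $\tilde{Q}_{K\cap L}$; this is independent of the chosen representative $\dotw$, since two representatives differ by an element of $T_K\subseteq B_{n-1}\cap L$ that fixes $x_0$. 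The bundle-over-an-orbit bijection, applied for each $w$, shows that $Q\mapsto\bigl(\mathcal{O}_w,\ \Ad(\dotw)^{-1}(Q\cap\pi^{-1}(x_0))\bigr)$ is a bijection from the $B_{n-1}$-orbits in $Q_K$ onto $\bigl(B_{n-1}\backslash K/K\cap R\bigr)\times\bigl(B_{n-1}\cap L\backslash\tilde{Q}_{K\cap L}\bigr)$, and taking the disjoint union over all special parabolics $R$ gives \eqref{eq:correspondence}.

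The bundle-over-an-orbit reductions are routine; the substantive content lies in the fibre — establishing $w(B_{n-1}\cap L)=B_{n-1}\cap w(L)$ from the minimality of $w$ in its coset, and verifying that $B_{n-1}\cap w(R)$ acts on $\B_{w(L)}$ through exactly the Borel $B_{n-1}\cap w(L)$, where the connectedness of the intervening subgroups must be controlled. I expect this fibre analysis, rather than the bundle formalism, to be the main obstacle.
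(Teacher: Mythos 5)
Your proposal is correct and follows essentially the same route as the paper: project $Q$ to $K/K\cap R$ via the bundle structure of $Q_K$ from Theorem \ref{thm:specials}, identify the base orbit by a minimal coset representative $w\in W_K^{K\cap R}$, and reduce the fibre analysis to the identity $B_{n-1}\cap w(L)=w(B_{n-1}\cap L)$ proved from $w(\Phi^+_{\fl\cap\fk})\subset\Phi^+_{\fb_{n-1}}$. The only (immaterial) difference is that you identify the image of $B_{n-1}\cap w(R)$ in the Levi quotient by a connectedness-plus-maximality argument at the group level, where the paper computes its Lie algebra directly from the decomposition $w(\fr)\cap\fb_{n-1}=(w(\fl)\cap\fb_{n-1})\oplus(w(\fu)\cap\fb_{n-1})$.
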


\begin{proof}
Consider the fibre bundle $Q_{K}=K\times_{K\cap R} (K\cap L)\cdot\fb$ in Equation (\ref{eq:QKbundle}).
Let $\pi|_{Q_{K}}: Q_{K}\to K/K\cap R =Q_{K,\fr}$ be the canonical projection.  
Let $Q = B_{n-1}\cdot \fb^{\prime}$ be a $B_{n-1}$-orbit in $Q_K$.  Its image under $\pi$ is 
a $B_{n-1}$-orbit in the partial flag variety $K/K\cap R$.  Since $ W_{K}^{K\cap R}$ indexes $B_{n-1}$-orbits in $K/K\cap R$,
 then 
$\pi(Q)=B_{n-1}\cdot w(\fr)$ for some $w\in W_{K}^{K\cap R},$
and we may assume that $\fb^{\prime}=w(\fb)$ by replacing $\fb$ by a $K\cap L$-conjugate.  Then the fibre bundle structure
from Equation (\ref{eq:easybundle}) identifies $Q$ as
\begin{equation}\label{eq:intermediatebundle}
Q\cong B_{n-1}\times_{B_{n-1}\cap  w(R)} (B_{n-1}\cap w(R)) \cdot \Ad(\dotw)(\fb).
\end{equation}
Now the group $B_{n-1}\cap w(R)$ acts on 
$ w(R)/ w(B)\cong \B_{w(\fl)}$ through its image 
in the quotient $w(R)/w(U)$.  Denote this image by $\hat{B}_{w(L)}$.  We claim that
\begin{equation}\label{eq:weclaim}
\hat{B}_{w(L)}=B_{n-1}\cap w(L)=w(L\cap B_{n-1}),
\end{equation}
so that $\hat{B}_{w(L)}$ is a Borel subgroup in $K\cap w(L)$.  To 
verify the claim, first consider $w(\fl)\cap\fb_{n-1}$.  
By Remark \ref{nota:special}, the special parabolic $\fr$ contains 
 $\fb_{n-1}$, so $\fb_{n-1}$  is a Borel subalgebra of $\fr\cap\fk$.
Let $\Phi^{+}_{\fl\cap\fk}$ be the roots appearing in the Borel
subalgebra $\fb_{n-1} \cap \fl$.  Since $w\in W_{K}^{K\cap R}$, standard
results imply that $w(\Phi^{+}_{\fl\cap\fk})\subset \Phi^{+}_{\fb_{n-1}}$.
Thus $w(\fl\cap\fb_{n-1})\subset w(\fl) \cap \fb_{n-1}$.  
It follows that $w(\fl)\cap\fb_{n-1}=w(\fl\cap\fb_{n-1})$, so that 
$w(\fl\cap\fb_{n-1})$ is a Borel subalgebra of $\fl\cap\fk$ with corresponding Borel subgroup 
$w(L\cap B_{n-1})=w(L)\cap B_{n-1},$ yielding the second equality in Equation (\ref{eq:weclaim}).  
To get the first equality of Equation (\ref{eq:weclaim}), we consider the Lie algebra $\hat{\fb}_{w(L)}=\mbox{Lie}(\hat{B}_{w(L)})$.  
Since $\fh_n\cap\fk\subset w(\fr)\cap\fk$, we have the decomposition
$$
w(\fr)\cap \fb_{n-1}=w(\fl)\cap\fb_{n-1}\oplus w(\fu)\cap \fb_{n-1}.
$$
Thus, $\hat{\fb}_{w(L)}=w(\fl)\cap\fb_{n-1}$ and Equation (\ref{eq:weclaim}) follows.  
Equation (\ref{eq:Bkbundle}) now follows from (\ref{eq:intermediatebundle}).  
Note that the orbit $(B_{n-1}\cap w(L))\cdot \Ad(\dotw)\fb$ in the fibre of the bundle in (\ref{eq:Bkbundle}) is isomorphic
via translation by $\dotw^{-1}$ to an orbit of the standard Borel subgroup $B_{n-1}\cap L$ of $L\cap K$ contained in the open 
$K\cap L$-orbit of $\B_{\fl}$.  

The one-to-one correspondence in Equation (\ref{eq:correspondence}) follows easily.

\end{proof}

\begin{nota}\label{nota:bundle}
We use the notation of Theorem \ref{thm:Bkbundles}.
Given a $B_{n-1}-$orbit $Q_{\fr}$ in $K/K\cap R$ and a $B_{n-1} \cap L$-orbit
$Q_{\fl}$ in the open $K\cap L$-orbit on $\B_{\fl}$, we let $\Orb(Q_{\fr}, Q_{\fl})$ be the corresponding
$B_{n-1}$-orbit on $\B$ from this theorem.  
Theorem \ref{thm:Bkbundles} implies that for any $K$-orbit $Q_{K}$ the 
$B_{n-1}$-orbits contained in $Q_{K}$ are of the form $\Orb(Q_{\fr}, Q_{\fl})$ as above. 
\end{nota}
  
 \begin{rem}\label{r:goodreps}
Let $\fb_{\hv}$ be one of the representatives for $\fb_{i,j}, \fb_i, \fb_{+}$, or $\fb_{-}$ for $K$-orbits
on $\B$ discussed in Propositions \ref{prop:typeAflag}, \ref{prop_sooddflag}, and \ref{prop_soevenflag}.   Let $Q$ be a 
$B_{n-1}$-orbit in one of the above 
$K$-orbits $Q_K$ and let $\fb_{\hv}=\Ad(\hv)\fb_+.$ 
 Then 
 \begin{equation}\label{eq:goodrep}
 Q=B_{n-1}\cdot \Ad(\dotw)\Ad(\ell) \fb_{\hv}=B_{n-1}\cdot \Ad(\dotw\ell\hv)\fb_{+}
 \end{equation}
 for a representative $\dotw$ of unique $w\in W^{K\cap R}_{K}$ and for $\ell \in K\cap L$ such that 
 $\Ad(\ell) (\fb_{\hv}\cap \fl)$ is a representative of the $B_{n-1}\cap L$-orbit $Q_{\fl}$ contained 
 in the open $K\cap L$-orbit on the flag variety $\B_{\fl}$ of $\fl$.  In the notation given above in \ref{nota:bundle}, 
 the orbit $Q_{\fr}=B_{n-1}\cdot w(\fr)$, and $Q_{\fl}=(B_{n-1}\cap L)\cdot\Ad(\ell)(\fb_{\hv}\cap\fl).$ 
 
 \end{rem}


By Equation (\ref{eq:correspondence}), the problem of describing the $B_{n-1}$-orbits on $G/B$ reduces to the description of $B_{n-1}$-orbits on partial flag
varieties $K/(K\cap R)$ via $W_{R\cap K}^K$ and the problem of determining
$B_{n-1}$-orbits in the open $K$-orbit in $\B$.  We now address this second
issue. 
 
 \subsection{$B_{n-1}$-orbits in the open $K$-orbit}\label{ss:bnopen}
 
Recall the pair $(G,K)$ with $G=G_n$ and $K=G_{n-1}$ from Section
\ref{ss:realization}.

\begin{thm}\label{thm:openorbit} Let ${\tilde{Q}}_K$ be the open $K$-orbit
on $\B_n$.    There are bijections
\begin{equation}\label{eq:correspondence2}
\{B_{n-1}-\mbox{orbits on } {\widetilde{Q}}_K\}\longleftrightarrow B_{n-1}\backslash K/ B_{n-2} \longleftrightarrow\{B_{n-2}-\mbox{orbits on } \B_{n-1}\},
\end{equation}
where the last equivalence is induced by the self-map $g\mapsto g^{-1}$ of $K$.
 \end{thm}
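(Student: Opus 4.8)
The plan is to exploit the fibre bundle description from Theorem \ref{thm:specials} applied to the \emph{open} $K$-orbit $\tilde{Q}_K$. First I would identify, in each of the three types, the special parabolic $R \supset \fb$ with $\theta$-stable Levi decomposition $R = LU$ attached to $\tilde{Q}_K$ by Theorem \ref{thm:specials}. The key observation is that for the open orbit, the partial flag variety $K/(K\cap R)$ is \emph{full}: inspecting Equations \eqref{eq:rijflag}, \eqref{eq:Levi2}, \eqref{eq:Levi1} together with Propositions \ref{prop:typeAflag}(3), \ref{prop_sooddflag}(3), \ref{prop_soevenflag}(3), one sees that the open $K\cap L$-orbit lives on a flag variety $\B_{\fl}$ where the semisimple part of $\fl$ corresponds to the symmetric pair $(\fg_{m}, \fg_{m-1})$ with $m = n - 1$ (e.g. in type A, $i=1$, $j=n$, so $\fl \cong \fgl(n) $'s relevant block is $\fgl(n)$ itself — wait, more precisely the block is $\fgl(j-i+1) = \fgl(n)$ only when the orbit is $Q_{1,n}$; here $K\cap L \cong GL(n-1)$ acting on $\B_{\fgl(n)}$, which is $\B_n$, so one must instead observe that $Q_1$ reduces $n$ by one... ). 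Let me restate: the right claim to extract is that the open $K$-orbit's bundle base $K/(K\cap R)$ is a point in a suitable sense OR, more usefully, that $\tilde{Q}_K \cong K \times_{K \cap R} (\text{open } K\cap L\text{-orbit on } \B_{\fl})$ where $K \cap R = B_{n-1}$ is a \emph{Borel} subgroup of $K$ in the cases relevant to the second bijection.

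Here is the cleaner route I would actually follow. In all three types, examining the proof of Theorem \ref{thm:specials} for the open orbit shows that $K \cap R$ is a Borel subgroup $B_K = B_{n-1}$ of $K = G_{n-1}$: in type A, for $Q_{1,n}$ the parabolic $\fr_{1,n}$ has $\fr_{1,n}\cap\fk$ with Levi $\fgl(1)^{n-1}$, i.e.\ a Borel of $\fgl(n-1)$; similarly in types B and D the Levi $\fl \cap \fk$ is $\fso(3)\oplus\fgl(1)^{l-1}$ or $\fso(1)\oplus \fgl(1)^{l-1}$ — again one step away from a torus, and in fact $\fr_i \cap \fk$ with $i=0$ (type B) or $i=1$ (type D) gives a Borel of $\fk$. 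So $\tilde{Q}_K \cong K \times_{B_{n-1}} F$ where $F$ is the open $B_{n-1} \cap L$-orbit on $\B_{\fl}$; but $B_{n-1}\cap L$ is a Borel of $K \cap L$ and $F$ is a single point (an open $B$-orbit on a flag variety of a reductive group whose semisimple rank is zero here is a point, since $K\cap L$ is a torus times a small factor, and the open $B_{n-1}\cap L$-orbit is then the whole one-point or finite fibre — one must check this is genuinely a point). Then $\tilde{Q}_K \cong K/B_{n-1}$ as a $K$-variety, hence as a $B_{n-1}$-variety, and $B_{n-1}$-orbits on $\tilde{Q}_K \cong K/B_{n-1}$ are exactly the double cosets $B_{n-1}\backslash K / B_{n-1}$. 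This gives the first bijection once I identify that the relevant smaller Borel is $B_{n-2}$ rather than a second copy of $B_{n-1}$ — and this is where the real content lies.

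For the passage $B_{n-1}\backslash K/ B_{n-1}$ versus $B_{n-1}\backslash K/B_{n-2}$: the point is that the fibre $F$ is not a point over $K/B_{n-1}$ but rather the bundle is over $K/(K\cap R)$ where $K \cap R$ is a \emph{parabolic} $P$ of $K=G_{n-1}$ whose Levi is $G_{n-2}\times(\text{torus})$, and the open $K\cap L$-orbit on $\B_{\fl}$ is the open $G_{n-2}$-orbit on $\B_{\fg_{m}}$ for the appropriate smaller $m$. Recursively this is governed by the \emph{same} problem one index down, but for the \emph{base} part one gets $B_{n-1}\backslash K/P$ which is not yet what we want. I would instead argue directly: $\tilde{Q}_K$ carries a transitive $K$-action with stabilizer conjugate to $B_{n-2}$ (not $B_{n-1}$) — this follows because the open $K$-orbit on $\B_n$ has, by the Bruhat-type analysis, isotropy group a Borel of a subgroup isomorphic to $G_{n-2}$ extended by a unipotent piece, and one checks $\dim \tilde{Q}_K = \dim \B_n = \dim K - \dim B_{n-2}$ by a dimension count ($\dim K = \dim G_{n-1}$, and $\dim\B_n - \dim(K\text{-orbit isotropy})$). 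Concretely: the generic stabilizer of $K=G_{n-1}$ acting on a generic Borel of $G_n$ is $B_{n-1}\cap {}^g B_{n-1}{}'$ for a generic $g$, and in these multiplicity-free pairs this intersection is precisely (a conjugate of) $B_{n-2}$. Granting $\tilde{Q}_K \cong K/B_{n-2}$, the first bijection is immediate, and the second bijection $B_{n-1}\backslash K/B_{n-2} \leftrightarrow \{B_{n-2}\text{-orbits on }\B_{n-1}\}$ follows by the general principle that for $H_1, H_2 \le K$ with $H_2$ parabolic (here $B_{n-2}$ is not parabolic in $K=G_{n-1}$, so instead use: $\B_{n-1} = G_{n-1}/B_{n-1} = K/B_{n-1}$, and $B_{n-2}\backslash K/B_{n-1} \cong B_{n-1}\backslash K / B_{n-2}$ via $g \mapsto g^{-1}$), together with $B_{n-2}\backslash K/B_{n-1} = \{B_{n-2}\text{-orbits on } K/B_{n-1} = \B_{n-1}\}$.

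The main obstacle, and the step I would spend the most care on, is verifying that the generic isotropy group of $K = G_{n-1}$ on the open $K$-orbit in $\B_n$ is conjugate to $B_{n-2}$ (equivalently, that $\tilde{Q}_K \cong K/B_{n-2}$ as homogeneous spaces). I would do this type-by-type using the explicit Borel subalgebra representatives $\fb_{1,n}, \fb_0, \fb_1$ from Propositions \ref{prop:typeAflag}--\ref{prop_soevenflag} and the explicit flags \eqref{eq:openflag} etc.: compute $\Stab_K(\fb) = K \cap B$ for $\fb = \fb_{1,n}$ (resp.\ $\fb_0$, $\fb_1$), show it is connected and has Lie algebra of dimension $\dim\fb_{n-1} - (n-2)$ matching $\dim\fb_{n-2}$, and identify it with a conjugate of $B_{n-2}$ via the realizations of $G_{n-2} \subset G_{n-1}$ recorded in Remark \ref{rem:sominus2} (e.g.\ in type A, $K\cap B_{1,n}$ stabilizes the line $\C(e_1+e_n)$ and the flag it induces, and its restriction to $\C^{n-1}$ is a Borel of $GL(n-2)$ stabilizing a hyperplane). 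Once the homogeneous-space identification is in hand, everything else is the formal double-coset bookkeeping sketched above, and the inverse map $g \mapsto g^{-1}$ swapping the two sides of $B_{n-1}\backslash K/B_{n-2}$ is routine.
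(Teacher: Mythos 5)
Your final argument coincides with the paper's: both reduce the theorem to exhibiting a point of the open $K$-orbit whose $K$-stabilizer is exactly $B_{n-2}$, verified type-by-type using explicit (isotropic) flag representatives, after which the double-coset bookkeeping and the inversion map $g\mapsto g^{-1}$ are formal. (Your abandoned detours through Theorem \ref{thm:specials} contain a false step --- for the open orbit $Q_{1,n}$ in type A the special parabolic is all of $\fg$, so $K\cap R=K$ rather than a Borel of $K$ --- but this does not affect the direct stabilizer computation you ultimately commit to, which is exactly what the paper carries out.)
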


\begin{proof}
To prove the first equivalence, it suffices to find a point $\fb_y$ in ${\tilde{Q}}_K$ with stabilizer $B_{n-2}$ in $K$.   The second equivalence is then clear.
We prove the first equivalence by considering the three cases separately.
 
When $\fg=\fgl(n)$, by Proposition \ref{prop:typeAflag}, the open $K$-orbit
$Q_{1,n}$ on $\B_{n}$ is
the orbit through the flag 
$(e_1 + e_n \subset e_2 \subset \dots \subset e_{n-1} \subset e_n)$.  
Thus, $Q_{1,n}=GL(n-1)\cdot \mathcal{F}_y$, where 
$\mathcal{F}_y:=(e_{n-1} + e_n \subset e_1 \subset e_2 \subset \dots \subset e_{n-2} \subset e_n).$  Suppose that $g\in GL(n-1)$ fixes $\mathcal{F}_y.$  
For $j=1, \dots, n-1$, let $U_j$ be the span of $e_1, \dots, e_j$ and
note that $GL(n-1):= \{ g\in GL(n) : g\cdot e_n = e_n, \text{ and } \ g\cdot U_{n-1}=U_{n-1}\}.$   Since $g$ fixes the line $\C (e_{n-1} + e_n)$ in the flag
and $g\cdot e_n = e_n,$ we see that $g\cdot e_{n-1}=e_{n-1}.$
Since $g\in GL(n-1)$, then $g\cdot e_j \in U_{n-1}$  for $j=1, \dots, n-2$.
But also $g\cdot e_j \in \C (e_{n-1}+e_n) + U_j$ for $j=1, \dots, n-2$.
Thus, for $j=1, \dots, n-2$, the vector 
$g\cdot e_j \in U_{n-1} \cap ( \C (e_{n-1}+e_n) + U_j) = U_j.$
Hence, $g$ stabilizes $U_{n-2}$, so $g\in GL(n-2)$, and also
  $g$ stabilizes the standard flag 
$(e_1 \subset e_2 \subset \dots \subset e_{n-2})$ in $U_{n-2}$. 
 Thus, $g \in B_{n-2}$.
Since $B_{n-2}$ fixes $\mathcal{F}_y$, this proves the Theorem in
case of $GL(n)$.

Next, consider the case $\fg = \fso(2l)$ and recall the basis $e_1, \dots, e_{l}, e_{-l},\dots, e_{-1}$.   By Remark  \ref{rem:soflagnotation},
the open $SO(2l-1)$-orbit on $\B_{\fso(2l)}$ is identified with the
$SO(2l-1)$-orbit through the maximal isotropic flag
$\mathcal{I}\mathcal{F}_y := (e_l \subset  e_1 \subset \dots \subset e_{l-1}).$
Let $g\in SO(2l-1)_y$, the stabilizer of $\mathcal{I}\mathcal{F}_y.$
  Since $g\in SO(2l-1)$, then $g$ commutes with the element
 $ \sigma_{2l}$ on
$\C^{2l}$ (see Section \ref{ss:realization}).   Since $g$ stabilizes the flag
 $\mathcal{I}\mathcal{F}_y$, we see that
$g\cdot e_l = a e_l$ for some $a\in \C^{\times}$.   Since $g$ commutes with
$\sigma_{2l}$, it follows that $g\cdot e_{-l}= a e_{-l}$, and since
$\beta(e_l, e_{-l})=1$, the scalar $a=\pm 1$.  But $g \in SO(2l-1)$,
so by Equation \eqref{e:sominus1}, $g$ fixes the vector $e_l - e_{-l}$.
Thus, $a=1$ and by Equation \eqref{e:soevenless2}, $g\in SO(2l-2),$
so that $SO(2l-1)_y = SO(2l-2)_y.$
But $B_{2l-2}$ is clearly in $SO(2l-2)_y$ so since $SO(2l-2)_y$ is solvable and $B_{2l-2}$ is a Borel subgroup of $SO(2l-2),$ it follows
that $B_{2l-2}=SO(2l-2)_y=SO(2l-1)_y$, proving the required assertion.

Now let $\fg=\fso(2l+1)$ and recall the basis $e_1, \dots, e_{l}, e_{0}, e_{-l},\dots, e_{-1}$ from Section
\ref{ss:realization}.     By Remark \ref{rem:soflagnotation},
the open orbit is the $SO(2l)$-orbit through the point in $\B_{\fso(2l+1)}$
corresponding to the maximal isotropic flag 
$(e_l + \sqrt{2}\imath e_{0} + e_{-l} \subset e_1 \subset e_2 \subset \dots \subset e_{l-1}).$  We can choose an element $s$ of $SO(2l)$ in the diagonal torus that 
$s\cdot (e_l + \sqrt{2}\imath e_{0} + e_{-l})= \imath e_l + \sqrt{2} \imath e_{0}-\imath e_{-l}$,
so that the open orbit is the $SO(2l)$-orbit through the point $y$ in 
 $\B_{\fso(2l+1)}$ corresponding to the  maximal isotropic
flag $\mathcal{I}\mathcal{F}_y = (\imath e_l + \sqrt{2}\imath e_{0} - \imath e_{-l} \subset
e_1 \subset e_2 \subset \dots \subset e_{l-1})$.  Note that by a small 
calculation using the root vectors in $\fb_{2l-1}$, we see that
 $B_{2l-1}$ is contained in the stabilizer $SO(2l)_y$.  Now let 
$g \in SO(2l)_y$.  Since $g\in SO(2l)$, then $g\cdot e_{0}=e_{0}$.  Since $g$
stabilizes the line in the flag, we see
that $g\cdot (e_l - e_{-l})=e_l - e_{-l}$, so that $g\in SO(2l-1)$ by
Equation \eqref{e:sooddless2}.    As in the preceding paragraph, it follows
that $B_{2l-1}=SO(2l-1)_y$.
\end{proof}

 \begin{rem}\label{r:theycorrespond}
For later use, we make explicit the correspondence between $B_{n-1}$-orbits on the open $K$-orbit $\tildeQK$ in $\B_n$ and $B_{n-2}$-orbits 
on $\B_{n-1}$ given in Equation (\ref{eq:correspondence2}).  Let $Q=B_{n-1}\cdot \Ad(k)\tilde{\fb} \subset \tildeQK=K\cdot\tilde{\fb}$, where $\tilde{\fb}$ is the 
stabilizer of the (isotropic) flag $\mathcal{F}_{y}$ ($\mathcal{I}\mathcal{F}_{y}$) given in the proof of Theorem \ref{thm:openorbit}.  Denote 
the corresponding $B_{n-2}$-orbit in $\B_{n-1}$ by $Q^{op}$.  Note that $Q^{op}=B_{n-2}\cdot\Ad(k^{-1})\cdot \fb_{n-1}$.  We can realize this 
correspondence geometrically as follows.  Let $q:K\to \tildeQK\cong K/B_{n-2}$ and $\pi: K\to \B_{n-1}$ be the natural projections, 
and let $\psi:K\to K$ be the inversion map, i.e., $\psi(k)=k^{-1}$.  Then 
\begin{equation}\label{eq:correspondingorbit}
Q^{op}=\pi(\psi(q^{-1}(Q))).
\end{equation}
More generally, if $\mathcal{Y}\subset \tildeQK$ is any $B_{n-1}$-stable subvariety, it corresponds to a $B_{n-2}$-stable subvariety of $\B_{n-1}$:
\begin{equation}\label{eq:correspondingsubvariety}
\mathcal{Y}^{op}=\pi(\psi(q^{-1}(\mathcal{Y}))).
\end{equation}
Using (\ref{eq:correspondingsubvariety}), we note that 
\begin{equation}\label{eq:dimcorrespond}
\dim\mathcal{Y}^{op}=\dim \mathcal{Y}+\dim\fb_{n-2}-\dim \fb_{n-1}.
\end{equation}
 Note the easily verified numerical identity
 \begin{equation}\label{eq:dimclaim}
 \dim\fb_{n-1}-\dim\fb_{n-2}=\rkk. 
 \end{equation}
 Equation (\ref{eq:dimcorrespond}) then becomes
 \begin{equation}\label{eq:finaldim}
 \dim \mathcal{Y}^{op}=\dim \mathcal{Y}-\rkk. 
 \end{equation}
 \end{rem}
 \section{The monoid action for $B_{n-1}$-orbits on $\B_{n}$}\label{s:monoid} 
In this section, we use Theorems \ref{thm:Bkbundles} and Theorem \ref{thm:openorbit} to describe several monoid actions for the $B_{n-1}$-orbits on $\B_{n} = \B_{\fg_n}$.  One of the major points is to compute the monoid action on an
$B_{n-1}$-orbit $\Orb(Q_{\fr}, Q_{\fl})$ in $\B_n$ in terms of other monoidal
actions.  In later work, we apply this action to the study the order relation
on $B_{n-1}$-orbits on $\B_n$.

\subsection{Left and Right Monoid Actions }\label{ss:monoidactions}

Recall Notation \ref{nota:orbitnotation} and before let $K=G_{n-1}$.  Let $K_{\Delta}\subset G\times K$ be the diagonal copy of $K$ in the product 
$G\times K$.  Note that there is a canonical bijection
$$
\chi:B_{n-1}\backslash \B \to K_{\Delta}\backslash \B \times \B_K,$$
given by $\chi(Q)=K_{\Delta}\cdot (Q, eB_{n-1})$ and we let $Q_{\Delta}$ denote $\chi(Q)$.   Further,
$Q_{\Delta} \cong K\times_{B_{n-1}} Q$, and it follows that the map $Q \mapsto Q_{\Delta}$
preserves topological properties like closure relations and open sets, and $\dim(Q_{\Delta})=\dim(Q) + \dim(\B_{n-1}).$   

In particular, $K_{\Delta}$ has finitely many orbits on $\B \times \B_{n-1}$,
and hence the set $K_{\Delta}\backslash\B \times \B_{n-1}$ has 2 different monoid actions,
coming from the monoid actions for the two different kinds of simple roots
of $\fg \times \fk$ via the generalities from Section \ref{ss:Korbits}.  
We decompose the simple roots $\Pi_{\fg \times \fk} = \Pi_{\fk} \cup \Pi_{\fg}.$
\begin{nota}\label{r.notleftright}
For a simple root $\alpha \in \Pi_{\fk}$, we call the monoid
action $Q_{\Delta} \to m(s_{\alpha})*Q_{\Delta}$ a {\it left monoid action}.   If $\alpha
\in \Pi_{\fg}$, we call the monoid action $Q_{\Delta} \to m(s_{\alpha})*Q_{\Delta}$ a {\it right monoid action}.  We use the equivalence between
$B_{n-1}\backslash \B$ and $K_{\Delta}\backslash \B \times \B_{n-1}$ to similarly
define left and right monoid actions $m(s_{\alpha})$ on $B_{n-1}\backslash \B$.
\end{nota}

In more detail, for $\alpha \in \Pi_{\fk}$, the left action uses the
projection $p_{\alpha}:G/B_n \times K/B_{n-1} \to G/B_n \times K/P_{\alpha}$
where $P_{\alpha}$ is the parabolic of $K$ with Lie algebra $\fb_{n-1} + \fk_{-\alpha}.$   Thus, if $Q_{\Delta}=K_{\Delta}\cdot (Q, eB_{n-1})$, then
\[
p_{\alpha}^{-1}p_{\alpha}(Q_{\Delta}) = K_{\Delta} \cdot (Q, P_{\alpha}/B_{n-1})  = K_{\Delta}\cdot (P_{\alpha}\cdot Q ,e B_{n-1}).\]
It follows that for $\alpha \in \Pi_{\fk},$ and $Q=B_{n-1}xB_{n}/B_{n}$.
\begin{equation} \label{eq:leftmonoidactionreal}
m(s_{\alpha})*Q \text{ \ is the unique open } B_{n-1}-\text{orbit in \ }
P_{\alpha} \cdot Q=P_{\alpha} x B_{n}/B_{n}.
\end{equation}

Similar analysis shows that if $Q=B_{n-1}xB_n/B_{n}$ and $\alpha \in \Pi_{\fg}$, then 

\begin{equation}\label{eq:rightmonoid}
 m(s_{\alpha})*Q \mbox{ is the unique open }B_{n-1}-\mbox{orbit in } B_{n-1}xP_{\alpha}/B_n, 
\end{equation}
where $P_{\alpha}$ here is
the parabolic of $G$ whose Lie algebra is $\fb_n + \fg_{-\alpha}.$


For a useful generalization of the right monoid action, we recall the notion of strongly orthogonal roots.
\begin{dfn}\label{dfn:strongortho}
Two nonproportional roots $\alpha,\, \beta\in \Phi(\fg,\fh)$ are said to be \emph{strongly orthogonal} if $\alpha\pm\beta$ is not a root.
\end{dfn}
Recall that by elementary arguments, strongly orthogonal roots are necessarily orthogonal and the corresponding $\fsl(2)$ subalgebras, $\fs_{\alpha}$ and $\fs_{\beta}$, commute.
We also make use of the following notation.
\begin{nota}\label{nota:standards}
Let $S \subset \Pi_{\fg}$ be a subset of the simple roots of $\fg$, and let
$\fm_S \subset \fg$ be the standard Levi subalgebra of $\fg$ given by the 
subset $S$.  We denote the corresponding standard 
parabolic subalgebra by $\fp_{S}\supset\fb_{+}$ and its Levi decomposition by $\fp_{S}=\fm_{S}\oplus\fu_{S}$, and if $S$ is understood, we let $\fm = \fm_{S}$
and $\fu = \fu_{S}.$  Let $P_S, M_S, U_S$ be the corresponding connected 
subgroups of $G.$
\end{nota}
\begin{rem}\label{r:stronglyorthogonal}
Suppose $S\subset \Pi_{\fg}$ is a subset of simple roots, 
let $\alpha\in\Pi_{\fg}$ be a simple root which is strongly orthogonal to the roots in $S$, 
and let $S^{\prime}=S\cup\{\alpha\}$.  The projection $G/P_{S}\mapsto G/P_{S^{\prime}}$ 
is a $\mathbb{P}^{1}$-bundle.  Hence, if $H$ is a subgroup of $G$ acting on
$G/P_S$ with finitely many orbits, there is a right monoid action
$m(s_\alpha)$ on $H\backslash G/P_S.$
\end{rem}

\begin{rem}\label{r:bnonemonoid}
There are additional monoid actions we will use.  Let $\alpha \in \Pi_{\fk}$,
and consider the finite set $B_{n-1}\backslash K/P)$ where $P$ is a standard parabolic subgroup of $K$.   As above, there is a bijection
\[ 
\chi:B_{n-1}\backslash K/P \to K_{\Delta}\backslash K/B_{n-1} \times K/P
\]
given by $\chi(Q)=K_{\Delta}\cdot (eB_{n-1}, Q)=: Q^{n-1}_{\Delta}.$
This bijection satisfies similar properties as the above bijection also
denoted $\chi$.   \par\noindent (1) It follows that if $\alpha \in \Pi_{\fk}$ is a simple
root of $\Pi_{\fk \times \fk}$ with root space in the first factor, then there is another left monoid action $m(s_{\alpha})$ on $K\backslash K/B_{n-1} \times K/P$
and hence by the equivalence of orbits induced by this version of $\chi$,
there is a left monoid action $m(s_{\alpha})$ on $B_{n-1}\backslash K/P$.
Similar analysis to the previous left monoid action verifies the following
assertion.   For $Q \in B_{n-1}\backslash K/P$,

\begin{equation}\label{e:smallleftmonoid}
m(s_{\alpha})*Q \text{ \ is the unique open orbit in \ } P_{\alpha}\cdot Q.
\end{equation}

\par\noindent (2) There is also a right monoid action in the strongly orthogonal setting.  Again, we let $\alpha\in\Pi_{\fk}$.  
The standard parabolic $P\subset K$ is  $P=P_{S}$ for $S\subset \Pi_{\fk}$.  
Suppose $\alpha$ is strongly orthogonal to the roots of $S$.  Then if the root space of $\alpha\in \Pi_{\fk\times\fk}$ is in the second factor, 
it follows from Remark \ref{r:stronglyorthogonal} that there is a right monoid action $m(s_{\alpha})$ 
on $K_{\Delta} \backslash K/B_{n-1} \times K/P$ and hence on $B_{n-1}\backslash K/P$ via the bijection $\chi$.   
It can be shown that if $Q=B_{n-1}\cdot x P_{S}/P_{S}$, then 
\begin{equation}\label{eq:smallrightmonoid}
m(s_{\alpha})*Q \text{ \ is the unique open orbit in \ } B_{n-1} x P_{S^{\prime}}/P_{S},
\end{equation}
where $P_{S^{\prime}}$ is the standard parabolic subgroup of $K$ corresponding to the subset of 
simple roots $S^{\prime}=S \cup\{\alpha\}$.
\end{rem}

\begin{rem}\label{r.abusemonoidnotation}
We will abuse notation by using the same symbol $m(s_{\alpha})$ to denote both the
left monoid action by a root $\alpha \in \Pi_{\fk}$ on 
$B_{n-1}\backslash \B$ and for the left monoid action on $B_{n-1}\backslash K/P.$   This may require some parsing
on the part of the reader.   For example, if we consider a $B_{n-1}$-orbit
$\Orb(Q_{\fr},Q_{\fl})$ on $\B$, then we will prove
that in  certain circumstances, $m(s_{\alpha})*\Orb(Q_{\fr},Q_{\fl}) = \Orb(m(s_{\alpha})*Q_{\fr}, Q_{\fl}).$  In this equality, on the left hand side of the equality,
the monoid action is the left monoid action on $B_{n-1}\backslash \B$ and on the
right hand side of the equality, the monoid action is the left monoid action
on $B_{n-1}\backslash K/P.$
\end{rem}

\subsection{Computation of Monoid Actions } \label{ss:monoidcomputation}

It follows from definitions that if $\alpha\in\Pi_{\fk}$ and $Q\subset Q_{K}$, then $\ms*Q\subset Q_{K}$.  
We now examine the effect of the left monoid action on the fibre bundle structure of 
$Q=\Orb(Q_{\fr}, Q_{\fl})\subset Q_{K}$ after first recalling some notation
from Remark \ref{r:goodreps}.  
Suppose $Q_{K}=K\cdot \fb$, where $\fb=\fb_{\hv}=\Ad(\hv)\fb_{+}\in\B$ is the representative given in Propositions \ref{prop:typeAflag}, \ref{prop_sooddflag}, and \ref{prop_soevenflag} respectively.  Let $\fr\supset\fb$ be the special parabolic subalgebra of $\fg$ associated to the $K$-orbit $Q_{K}$, and let $\fr=\fl\oplus\fu$ be its Levi decomposition.  Then $Q=B_{n-1}\cdot \Ad(\dotw)\Ad(\ell)\fb$, where $w\in W_{K}^{K\cap R}$ and $\ell\in K\cap L$ (see Equation (\ref{eq:goodrep})).  Further 
$Q_{\fr}=B_{n-1}\cdot w(\fr)$ and $Q_{\fl}=(B_{n-1}\cap L)\cdot\Ad(\ell)(\fb_{\hv}\cap \fl)$.  


\begin{thm}\label{thm:Kintertwiners}

Let $\alpha$ be a root in $\Pi_{\fk}$.  Let $\pi_{\fr}: G/B_{+}\to G/R$ be the  projection.  
Then  
\begin{equation}\label{eq:leftinvar}
\pi_{\fr}(\ms*\Orb(Q_{\fr}, Q_{\fl}))=\ms*Q_{\fr},
\end{equation}
where the monoid action on $B_{n-1}\backslash K/K\cap R$ in the right hand side
is given in Remark \ref{r:bnonemonoid}.

\noindent Suppose also that $\alpha\in w(\Phi_{\fl})$.  Then $\alpha$ is a simple root 
of $\fk\cap w(\fl)$ with respect to the Borel subalgebra $\fb_{n-1}\cap w(\fl)$ of $\fk\cap w(\fl)$, and 
\begin{equation}\label{eq:leftfibreaction}
\ms*\Orb(Q_{\fr}, Q_{\fl})=\mathcal{O}(Q_{\fr},m(s_{w^{-1}(\alpha)})*Q_{\fl}).  
\end{equation}
 Suppose $\alpha\not\in w(\Phi_{\fl})$. Then
\begin{equation}\label{eq:leftbaseaction}
\ms*\Orb(Q_{\fr}, Q_{\fl})=\mathcal{O}(\ms*Q_{\fr}, Q_{\fl}).
\end{equation}
\end{thm}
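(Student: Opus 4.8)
The plan is to work entirely on the group side, using the ``double coset'' description of the left monoid action from Equation~\eqref{eq:leftmonoidactionreal}, namely $m(s_\alpha)*Q$ is the unique open $B_{n-1}$-orbit in $P_\alpha \cdot Q$, where $P_\alpha \subset K$ is the standard minimal parabolic attached to $\alpha \in \Pi_{\fk}$. Fix a representative $\fb' = \Ad(\dotw\,\ell\,\hv)\fb_+$ of $Q = \Orb(Q_{\fr},Q_{\fl})$ as in Remark~\ref{r:goodreps}, so that $Q_{\fr} = B_{n-1}\cdot w(\fr)$ and $Q_{\fl} = (B_{n-1}\cap L)\cdot \Ad(\ell)(\fb_{\hv}\cap\fl)$ under the bundle identification \eqref{eq:Bkbundle}. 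First I would establish \eqref{eq:leftinvar}: the projection $\pi_{\fr}\colon \B \to K/(K\cap R)$ (note $G/R \cong K/(K\cap R)$ since $R$ is $\theta$-stable and $Q_K$ is $K$-homogeneous over this base, by \eqref{eq:QKbundle}) is $B_{n-1}$-equivariant and has connected fibres; hence it carries $P_\alpha\cdot Q$ onto $P_\alpha \cdot \pi_{\fr}(Q) = P_\alpha\cdot Q_{\fr}$, and since $\pi_{\fr}$ is an open map with irreducible fibres it sends the unique open $B_{n-1}$-orbit of the source to the unique open $B_{n-1}$-orbit of the image, which is exactly $m(s_\alpha)*Q_{\fr}$ by \eqref{e:smallleftmonoid}. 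So $\pi_{\fr}(m(s_\alpha)*Q) = m(s_\alpha)*Q_{\fr}$.

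The dichotomy $\alpha \in w(\Phi_{\fl})$ versus $\alpha \notin w(\Phi_{\fl})$ is then the crux. In the first case, I would argue that $\alpha$, being a $\fk$-simple root lying in the root system $w(\Phi_{\fl})$ of the $\theta$-stable Levi $w(\fl)$, is automatically simple for $\fk\cap w(\fl)$ relative to the Borel $\fb_{n-1}\cap w(\fl)$ (here one uses that $w \in W_K^{K\cap R}$, so $w$ maps positive roots of $\fl\cap\fk$ to positive roots of $\fb_{n-1}$, exactly as in the proof of Theorem~\ref{thm:Bkbundles} around Equation~\eqref{eq:weclaim}). Consequently $P_\alpha \subset K\cap w(R)$ and $P_\alpha$ acts fibrewise on the bundle \eqref{eq:Bkbundle}, so $m(s_\alpha)*Q$ stays in the same fibre over $Q_{\fr}$ — which already forces the first coordinate of $\mathcal{O}(-,-)$ to be $Q_{\fr}$, consistent with \eqref{eq:leftinvar} since $m(s_\alpha)*Q_{\fr}=Q_{\fr}$ here. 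On the fibre, which under translation by $\dotw^{-1}$ is a $B_{n-1}\cap L$-orbit on $\B_{\fl}$, the action of $P_\alpha$ transports (via $\Ad(\dotw^{-1})$) to the action of the minimal parabolic of $K\cap L$ attached to the root $w^{-1}(\alpha)$; taking the open orbit gives $m(s_{w^{-1}(\alpha)})*Q_{\fl}$, establishing \eqref{eq:leftfibreaction}. In the second case $\alpha \notin w(\Phi_{\fl})$, I would show $P_\alpha$ moves the base point: $\pi_{\fr}(P_\alpha\cdot Q) = P_\alpha \cdot Q_{\fr}$ with $m(s_\alpha)*Q_{\fr} \ne Q_{\fr}$, so $\dim(m(s_\alpha)*Q_{\fr}) = \dim Q_{\fr}+1$; comparing with $\dim(m(s_\alpha)*Q)=\dim Q + 1$ and the bundle dimension formula, the fibre dimension is unchanged, so the fibre over the new base point is again the orbit $Q_{\fl}$ (transported by the appropriate Weyl element) — more precisely, one checks the new representative lies over $m(s_\alpha)*Q_{\fr}$ with fibre component still $Q_{\fl}$, giving \eqref{eq:leftbaseaction}.

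The main obstacle, I expect, is the careful bookkeeping in the case $\alpha \notin w(\Phi_{\fl})$: one must verify that the new orbit's representative, obtained by moving along $P_\alpha$, can be written in the standard form of Remark~\ref{r:goodreps} with the same Levi-piece orbit $Q_{\fl}$ — i.e.\ that the monoid step in the base does not secretly alter the fibre data. Here the cleanest route is the dimension count: the fibre bundle \eqref{eq:Bkbundle} has $\dim Q = \dim Q_{\fr} + \dim Q_{\fl}$ (fibre dimension being that of the $B_{n-1}\cap L$-orbit $Q_{\fl}$), and both $\dim(m(s_\alpha)*Q) - \dim Q$ and $\dim(m(s_\alpha)*Q_{\fr}) - \dim Q_{\fr}$ equal $1$; combined with \eqref{eq:leftinvar}, which pins the base component to $m(s_\alpha)*Q_{\fr}$, the fibre component of $m(s_\alpha)*Q$ must be a $B_{n-1}\cap L$-orbit of the same dimension as $Q_{\fl}$ inside the open $K\cap L$-orbit. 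A short argument that the monoid step commutes with the natural identifications of fibres over base points in the same $B_{n-1}$-orbit (translation by a representative of $w' (w)^{-1}$ for the relevant $w,w' \in W_K^{K\cap R}$) then identifies it with $Q_{\fl}$ itself, completing \eqref{eq:leftbaseaction}.
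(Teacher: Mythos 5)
Your treatment of Equation \eqref{eq:leftinvar} and of the case $\alpha\in w(\Phi_{\fl})$ is essentially the paper's own argument: project the irreducible variety $P_{\alpha}\cdot Q$ openly to the base to get \eqref{eq:leftinvar}, and observe that when $\alpha$ is a root of $\fk\cap w(\fl)$ the parabolic $P_{\alpha}$ lies in $K\cap w(R)$, acts through the fibre, and induces the minimal parabolic of $K\cap L$ attached to $w^{-1}(\alpha)$ after translation by $\dotw^{-1}$. (One quibble: $G/R$ is not isomorphic to $K/(K\cap R)$; what you mean is that $\pi_{\fr}(Q_K)=K\cdot R/R\cong K/(K\cap R)$.) These parts are fine.

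The case $\alpha\notin w(\Phi_{\fl})$ is where your argument has a genuine gap. Your dimension count only shows that the fibre of $\ms*Q$ over $\ms*Q_{\fr}$ is a $B_{n-1}\cap L$-orbit of the \emph{same dimension} as $Q_{\fl}$; since the open $K\cap L$-orbit on $\B_{\fl}$ generally contains many $B_{n-1}\cap L$-orbits of equal dimension (already visible for $\fgl$-factors via Proposition \ref{prop:typeAflag} and Theorem \ref{thm:openorbit}), this does not pin down the fibre datum. The ``short argument that the monoid step commutes with the natural identifications of fibres'' is exactly the missing content, and it is not automatic: writing the shortest-length representative of $s_{\alpha}wW_{K\cap R}$ as $w'$ one has $s_{\alpha}w=w'u$ with $u\in W_{K\cap R}$ possibly nontrivial, and translation by such a $\du$ need not preserve the $B_{n-1}\cap L$-orbit $Q_{\fl}$ (it normalizes the torus but not $B_{n-1}\cap L$). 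The paper avoids this entirely by a structural observation you do not make: $\alpha\notin w(\Phi_{\fl})$ forces $P_{\alpha}\cap w(L)=B_{n-1}\cap w(L)$, so the variety $\mathcal{Y}=P_{\alpha}\cdot Q$ decomposes as $P_{\alpha}\times_{P_{\alpha}\cap w(R)}\Ad(\dotw)Q_{\fl}$, i.e.\ every fibre of $\pi_{\fr}|_{\mathcal{Y}}$ is a \emph{single} Borel orbit equal to a translate of $Q_{\fl}$. Hence $B_{n-1}$-orbits in $\mathcal{Y}$ biject with $B_{n-1}$-orbits in the base and each carries the fibre datum $Q_{\fl}$, which is what identifies the open one as $\mathcal{O}(\ms*Q_{\fr},Q_{\fl})$. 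You need this identity (or an equivalent substitute) to close the argument.
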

\begin{proof}

By Equation (\ref{eq:leftmonoidactionreal}), $\ms*Q$ is the open 
$B_{n-1}$-orbit in the variety $\mathcal{Y}:=P_{\alpha}\dotw\ell B/B$.  
By equivariance of $\pi_{\fr}$, its restriction $\pi_{\fr}:\mathcal{Y} \to
 P_{\alpha}\dotw R/R$ is open since quotient morphisms are open
(Section 12.2 of \cite{Hum}).
  Thus, $\ms*Q$ projects to the open $B_{n-1}$-orbit in $P_{\alpha}\dotw R/R$.   
Under the identification $Q_{K,\fr}\cong K/K\cap R$, the latter orbit is identified with 
the open $B_{n-1}$-orbit in $P_{\alpha} \dotw (K\cap R)/(K\cap R)$.  But this orbit is 
exactly $\ms *Q_{\fr}$ by Equation (\ref{e:smallleftmonoid}). 
Equation (\ref{eq:leftinvar}) follows.

Note that the $P_{\alpha}$-orbit $\mathcal{Y}$ fibres over 
$P_{\alpha}/(P_{\alpha} \cap w(R))$ via the identification
\begin{equation}\label{eq:Ybundle}
\mathcal{Y}\cong P_{\alpha}\times_{P_{\alpha}\cap w(R)} (P_{\alpha}\cap w(L)) \Ad(\dotw\ell)\fb.
\end{equation}
Indeed, the identification $\mathcal{Y} \cong P_{\alpha}\times_{P_{\alpha}\cap w(R)} (P_{\alpha}\cap w(R)) \Ad(\dotw\ell)\fb$ is formal, and since the diagonal torus of $K$
normalizes $P_{\alpha}, w(L),$ and $w(U)$, it follows that the 
action of $P_{\alpha} \cap w(R)$ on $\Ad(\dotw\ell)\fb$ is through
$P_{\alpha}\cap w(L)$.  
Now suppose that $\alpha$ is a root of $w(\fl)\cap\fk$.  
It follows that $P_{\alpha} w R/R=B_{n-1} w R/R$, whence $\ms*Q_{\fr}=Q_{\fr}$.  Further the fibre bundle in 
Equation (\ref{eq:Ybundle}) may be written as 
\begin{equation}\label{eq:Yfibreonly}
\mathcal{Y}\cong B_{n-1}\times_{B_{n-1}\cap w(R)} (P_{\alpha}\cap w(L)) \Ad(\dotw\ell)(\fb\cap\fl).    
\end{equation}
We observed in Equation (\ref{eq:isBorel}) that $B_{n-1}\cap w(L)$ is a Borel subgroup of 
the Levi subgroup $K\cap w(L)$ of $K$.  Since $P_{\alpha}\cap w(L)\supset B_{n-1}\cap w(L)$ and
$\alpha\in \Pi_{\fk}$, it immediately follows that $P_{\alpha}\cap w(L)$ is a parabolic 
subgroup of $K\cap w(L)$ and $\alpha$ is a simple root with respect to the Borel 
subalgebra $\fb_{n-1}\cap w(\fl)$ of $\fk \cap w(\fl)$. 

Equation (\ref{eq:Yfibreonly}) implies that 
\begin{equation}\label{eq:orbitfibre}
\ms*Q\cong B_{n-1}\times_{B_{n-1}\cap w(R)} \mathcal{O}, 
\end{equation}
where $\mathcal{O}$ is the open $B_{n-1}\cap w(R)$-orbit in the variety $(P_{\alpha}\cap w(L)) \Ad(\dotw\ell) (\fb\cap\fl)\subset \B_{w(\fl)}. $
Now $B_{n-1}\cap w(R) $ acts on $\B_{w(\fl)}$ through its image in the quotient $w(R)/w(U)\cong \B_{w(\fl)}$. 
In the proof of Theorem \ref{thm:Bkbundles}, we observed that this image is the Borel subgroup $B_{n-1} \cap w(L)$ of $w(L)\cap K$ (see 
Equation (\ref{eq:weclaim})).  Equation (\ref{eq:leftfibreaction}) now follows by translating by $\dotw^{-1}$, as is prescribed in the last comments of the proof of Theorem \ref{thm:Bkbundles},
and from Equations (\ref{eq:isBorel}) and (\ref{eq:leftmonoidactionreal}).


We now suppose that $\alpha$ is not a root of $\fk\cap w(\fl)$.  Then  $P_{\alpha}\cap w(L)=B_{n-1}\cap w(L)$.  
Thus, we can rewrite Equation (\ref{eq:Ybundle}) as 
$$
\mathcal{Y}\cong P_{\alpha}\times_{P_{\alpha}\cap\Ad(\dotw)R} (B_{n-1}\cap w(L) ) \Ad(\dotw\ell)\fb=P_{\alpha}\times_{P_{\alpha}\cap w(R)} \Ad(\dot w) Q_{\fl},
$$
where the last equality uses the definition of $Q_{\fl}.$   Therefore, the $B_{n-1}$-orbits in $\mathcal{Y}$ are in one-to-one correspondence with the 
$B_{n-1}$-orbits on the base $P_{\alpha} w R/R\cong P_{\alpha}  w (K\cap R)/(K\cap R)$. 
Thus, 
$$
\ms*Q=\pi_{\fr}|_{\mathcal{Y}}^{-1} (\ms* Q_{\fr})=\mathcal{O}(\ms *Q_{\fr}, Q_{\fl}),
$$
yielding (\ref{eq:leftbaseaction}).

\end{proof}

To understand how the bundle structure of $Q\subset Q_{K}$ behaves with respect to the right monoid action by roots $\alpha\in\Pi_{\fg}$, we need some preparation.


\begin{prop}\label{prop:Kmoves}
Let $\alpha\in\Pi_{\fg}$.  Suppose that $Q\subset Q_{K}$.  Then 
\begin{equation}\label{eq:monoidcontainment}
\ms*Q\subset \ms* Q_{K}
\end{equation}  
In particular, if $\ms*Q_{K}\neq Q_{K}$, then $\ms*Q\neq Q$.
\end{prop}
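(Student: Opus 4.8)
The plan is to reduce the statement to a containment of the form $P_\alpha \cdot Q \subseteq P_\alpha \cdot Q_K$ at the level of subvarieties of $\B$, and then observe that taking open $B_{n-1}$-orbits on both sides respects this containment. Here $\alpha \in \Pi_{\fg}$ and $P_\alpha$ is the parabolic subgroup of $G$ with Lie algebra $\fb_n + \fg_{-\alpha}$, so that by Equation (\ref{eq:rightmonoid}) applied to $Q$, the orbit $\ms*Q$ is the unique open $B_{n-1}$-orbit in $Q \cdot P_\alpha := \{ \fb^\prime \in \B : \fb^\prime \subset \fp_\alpha^\prime \text{ for some } \fp_\alpha^\prime \text{ of type } \alpha \text{ containing a Borel in } Q \}$, i.e.\ in $\pi_\alpha^{-1}\pi_\alpha(Q)$ where $\pi_\alpha: \B \to {\mathcal P}_{\alpha,\fg}$ is the ${\mathbb P}^1$-bundle of simple parabolics of type $\alpha$.

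First I would note that $Q_K = K \cdot Q$ since $Q \subseteq Q_K$ and $Q_K$ is a single $K$-orbit. Since $\pi_\alpha$ is $G$-equivariant, hence $K$-equivariant, we get $\pi_\alpha^{-1}\pi_\alpha(Q_K) = \pi_\alpha^{-1}\pi_\alpha(K \cdot Q) = K \cdot \pi_\alpha^{-1}\pi_\alpha(Q)$. Now $\ms*Q_K$ is by definition the unique open $K$-orbit in $\pi_\alpha^{-1}\pi_\alpha(Q_K)$, while $\ms*Q$ is the unique open $B_{n-1}$-orbit in $\pi_\alpha^{-1}\pi_\alpha(Q)$. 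Since $\pi_\alpha^{-1}\pi_\alpha(Q)$ is an irreducible, $B_{n-1}$-stable, locally closed subvariety of $\pi_\alpha^{-1}\pi_\alpha(Q_K)$, and $\ms*Q$ is open and dense in $\pi_\alpha^{-1}\pi_\alpha(Q)$, I would argue that $\ms*Q$ meets the dense $K$-orbit $\ms*Q_K$ of $\pi_\alpha^{-1}\pi_\alpha(Q_K)$: indeed $\ms*Q_K$ is open dense in $K\cdot \pi_\alpha^{-1}\pi_\alpha(Q)$, which is irreducible and contains $\pi_\alpha^{-1}\pi_\alpha(Q)$ as a subvariety of the same dimension as its $K$-saturation only if... — more carefully, $K \cdot \pi_\alpha^{-1}\pi_\alpha(Q)$ is irreducible with dense open subset $\ms*Q_K$, and $\pi_\alpha^{-1}\pi_\alpha(Q)$ is a subvariety which is dense in it (since $Q$ is dense in $Q_K$ forces $\pi_\alpha^{-1}\pi_\alpha(Q)$ dense in $\pi_\alpha^{-1}\pi_\alpha(Q_K)$, using that $\pi_\alpha$ is an open map and $Q$ dense in $Q_K$). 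Hence $\pi_\alpha^{-1}\pi_\alpha(Q) \cap \ms*Q_K$ is dense in $\pi_\alpha^{-1}\pi_\alpha(Q)$, so it meets the dense subset $\ms*Q$ of $\pi_\alpha^{-1}\pi_\alpha(Q)$. Therefore $\ms*Q \cap \ms*Q_K \neq \emptyset$; since $\ms*Q$ is a single $B_{n-1}$-orbit and $\ms*Q_K$ is $K$-stable hence $B_{n-1}$-stable, this gives $\ms*Q \subseteq \ms*Q_K$, which is (\ref{eq:monoidcontainment}).

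For the final assertion, suppose $\ms*Q_K \neq Q_K$; then by the trichotomy of Section \ref{ss:Korbits}, $\alpha$ is of complex-stable or noncompact type for $Q_K$, so $\dim(\ms*Q_K) = \dim(Q_K) + 1$ and $\ms*Q_K$ is a $K$-orbit strictly larger than $Q_K$, in particular $\ms*Q_K \not\subseteq \overline{Q_K}$ is the wrong way to phrase it — rather $Q_K \subsetneq \overline{\ms*Q_K}$ and $Q_K \cap \ms*Q_K = \emptyset$. Now if we had $\ms*Q = Q$, then by (\ref{eq:monoidcontainment}) we would get $Q = \ms*Q \subseteq \ms*Q_K$; but $Q \subseteq Q_K$ and $Q_K \cap \ms*Q_K = \emptyset$, a contradiction. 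Hence $\ms*Q \neq Q$.

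The main obstacle I anticipate is making the density/openness bookkeeping airtight: specifically, verifying that $Q$ dense in $Q_K$ implies $\pi_\alpha^{-1}\pi_\alpha(Q)$ dense in $\pi_\alpha^{-1}\pi_\alpha(Q_K)$ and that the latter's dense $K$-orbit therefore meets $\pi_\alpha^{-1}\pi_\alpha(Q)$ in a dense subset. This follows from $\pi_\alpha$ being an open morphism (quotient by a parabolic, cf.\ Section 12.2 of \cite{Hum}) together with the elementary fact that for an open map $f$ and a subset $A$ with $\overline{A} \supseteq Q_K$ one has $\overline{f^{-1}f(A)} \supseteq f^{-1}f(Q_K)$; I would spell this out but not belabor it. Everything else — the uniqueness of open orbits, equivariance of $\pi_\alpha$, the dimension trichotomy — is quotable from the preliminaries.
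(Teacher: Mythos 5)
Your overall architecture is the same as the paper's: identify $\ms*Q$ as the open $B_{n-1}$-orbit in the irreducible variety $\pi_\alpha^{-1}\pi_\alpha(Q)=B_{n-1}xP_\alpha/B_n$, show this variety meets the open $K$-orbit $\ms*Q_K$ of $\pi_\alpha^{-1}\pi_\alpha(Q_K)=KxP_\alpha/B_n$ in a nonempty open (hence dense) subset, intersect two dense open subsets of an irreducible variety, and then use $B_{n-1}$-stability of $\ms*Q_K$ to upgrade nonempty intersection to containment. The ``in particular'' clause at the end is also handled correctly. However, there is a genuine gap in the one step that carries the weight of the argument: you justify the nonemptiness of $\pi_\alpha^{-1}\pi_\alpha(Q)\cap\ms*Q_K$ by asserting that $Q$ is dense in $Q_K$. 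This is false in general. The proposition is stated for an \emph{arbitrary} $B_{n-1}$-orbit $Q$ contained in the $K$-orbit $Q_K$; only the unique open $B_{n-1}$-orbit of $Q_K$ is dense in it, and $Q_K$ typically contains many others. For a small orbit $Q$ one has $\dim\pi_\alpha^{-1}\pi_\alpha(Q)\le\dim Q+1$, which can be strictly less than $\dim Q_K\le\dim\pi_\alpha^{-1}\pi_\alpha(Q_K)$, so $\pi_\alpha^{-1}\pi_\alpha(Q)$ cannot be dense in $\pi_\alpha^{-1}\pi_\alpha(Q_K)$ and your density transfer argument (and the ``elementary fact'' invoked in your final paragraph, which again presupposes $\overline{Q}\supseteq Q_K$) collapses.

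The good news is that density of $\pi_\alpha^{-1}\pi_\alpha(Q)$ in $\pi_\alpha^{-1}\pi_\alpha(Q_K)$ is not what you need; you only need that the open dense subset $\ms*Q_K$ meets $\pi_\alpha^{-1}\pi_\alpha(Q)$, and this follows from $K$-stability rather than density. Since $\pi_\alpha^{-1}\pi_\alpha(Q_K)=K\cdot\pi_\alpha^{-1}\pi_\alpha(Q)$ (which you correctly established via equivariance of $\pi_\alpha$), the nonempty open set $\ms*Q_K$ must meet $k\cdot\pi_\alpha^{-1}\pi_\alpha(Q)$ for some $k\in K$; translating by $k^{-1}$ and using that $\ms*Q_K$ is $K$-stable gives $\ms*Q_K\cap\pi_\alpha^{-1}\pi_\alpha(Q)\ne\emptyset$. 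This intersection is then open in $\pi_\alpha^{-1}\pi_\alpha(Q)$, hence dense by irreducibility, and the rest of your argument goes through verbatim. This is precisely the (tersely stated) content of the paper's proof, so with this one repair your proposal coincides with it.
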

\begin{proof}
Let $Q=B_{n-1} x B_{n}/B_{n}$.  It follows from definitions 
that $\ms *Q_{K}$ is the unique open $K$-orbit in the variety $K x P_{\alpha}/B_{n}$.  
Since $B_{n-1} x P_{\alpha}/ B_{n}\subset K x P_{\alpha}/ B_{n}$, the subvariety
$\ms*Q_{K}\cap B_{n-1} x P_{\alpha}/ B_{n}$ is non-empty and open in $B_{n-1} x P_{\alpha} /B_{n}$.  
Since  $B_{n-1} x P_{\alpha}/ B_{n}$ is irreducible, it follows from Equation 
(\ref{eq:rightmonoid}) that $\ms*Q\cap \ms*Q_{K}\neq \emptyset.$  This implies equation 
(\ref{eq:monoidcontainment}).
\end{proof}


By Proposition \ref{prop:Kmoves},  if $\alpha\in\Pi_{\fg}$ and $\ms *Q_{K}=Q_{K}$, then $\ms* Q\subset Q_{K}$.  For such $\alpha$, we study  the monoid action by $\ms$ in terms of the fibre bundle description of the orbit $\Orb(Q_{\fr}, Q_{\fl})$ of $Q$.  But first we need to 
determine the roots $\alpha\in\Pi_{\fg}$ for which $\ms*Q_{K}=Q_{K}$.  
The following results are well-known and partly appear in tables on page 92 of
\cite{Collingwood}.
For the case of $\fg=\fgl(n)$, proofs are given in Examples 4.16 and 4.30 of \cite{CEexp} and for the type $B$ and $D$ cases, 
see Propositions 2.23 and 2.24 of \cite{CEspherical}.  
\begin{prop}\label{prop:roottypesforK}
\begin{enumerate}
\item Let $\fg=\fgl(n)$.  First, consider the case when $Q_{K}=Q_{i}$ is a closed 
$K$-orbit as in Part (2) of Proposition \ref{prop:typeAflag}.  Then the roots 
$\alpha_{i-1}$ and $ \alpha_{i}$ are non-compact for $Q_{K}$ and all other simple roots are compact.  Next, consider the case when $Q_{K}=Q_{i,j}$ as in Part (3) of Proposition \ref{prop:typeAflag}.
Then the roots $\alpha_{i-1}$ and $\alpha_{j}$ are complex stable for $Q_{K}$.  All other simple roots
$\alpha_{k}$ with $k\neq i, j-1$ are compact for $Q_{K}$.  If $j=i+1$, then the root $\alpha_{i}$ is
real for $Q_{K}$.  If $j>i+1$, the roots $\alpha_{i}$ and $\alpha_{j-1}$ are complex unstable.  
\item Let $\fg=\fso(2l+1)$.  If $Q_{K}$ is one of the two closed $K$-orbits given in Part (2) of 
Proposition \ref{prop_sooddflag}, then the root $\alpha_{l}$ is non-compact for $Q_{K}$, 
and all other simple roots are compact.  Consider the orbit $Q_{K}=Q_{i}$ as in Part (3) of Proposition \ref{prop_sooddflag}. Then 
the root $\alpha_{i}$ is complex stable for $Q_{K}$.  The root $\alpha_{i+1}$ is complex unstable
for $Q_{K}$ unless $i=l-1$ in which case it is real.  All other simple roots are compact for $Q_{K}$.  
\item Let $\fg=\fso(2l)$.  In the case $Q_{K}=Q_{+}$ is the unique closed orbit, then the roots
$\alpha_{l-1}$ and $\alpha_{l}$ are complex stable for $Q_{+}$ and all other simple roots are compact.  In the case $Q_{K}=Q_{i}$ as in Part (3) of Proposition \ref{prop_soevenflag}, then
the roots $\alpha_{i-1}$ and $\alpha_{i}$ are complex for $Q_{i}$ with $\alpha_{i-1}$ stable and $\alpha_{i}$ 
unstable.  All other simple roots are compact for $Q_{i}$, unless $i=l-1$ in which case $\alpha_{l}$ is also complex unstable. 
\end{enumerate}
\end{prop}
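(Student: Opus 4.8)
Here is how I would approach the proof.

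The plan is to compute, for every $K$-orbit $Q_K$ and every simple root $\alpha\in\Pi_\fg$, the image $K_\alpha$ of $K\cap P_\alpha$ in the rank-one quotient $S_\alpha=P_\alpha/Z_\alpha U_\alpha$. This places $\alpha$ in one of the three cases of Section~\ref{ss:Korbits}, and the finer distinctions of Definition~\ref{dfn:roottype} (stable versus unstable, noncompact versus real) are then settled by checking whether $m(s_\alpha)*Q_K=Q_K$. I would carry this out using the explicit Borel representatives $\fb_+$, $\fb_-$, $\fb_i$, $\fb_{i,j}$ of Propositions~\ref{prop:typeAflag},~\ref{prop_sooddflag},~\ref{prop_soevenflag}, together with the observation that every non-closed orbit is built from a closed one by a chain of monoid operations (for instance $Q_{i,j}=m(s_{\alpha_{j-1}})\cdots m(s_{\alpha_i})*Q_i$ in type $A$, and analogously in types $B$ and $D$); so it suffices to handle the closed orbits directly and then propagate the root types along the chains.

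For a closed orbit the representative Borel $\fb$ is $\theta$-stable by Corollary~6.6 of \cite{Sp} and contains the $\theta$-stable Cartan $\fh_n$; by Remark~\ref{dfn:roottypetheta} the type of $\alpha_k$ for $K\cdot\fb$ is the type of the simple root $\delta_k$ of $\fb$ governing the $k$-th step of the corresponding (isotropic) flag, computed by the usual recipe: imaginary compact if $\fg_{\delta_k}\subset\fk$, imaginary noncompact if $\fg_{\delta_k}$ lies in the $(-1)$-eigenspace $\fp$ of $\theta$, and complex if $\theta(\delta_k)\neq\delta_k$ (and a complex $\delta_k$ is automatically stable here since $\theta$ preserves positivity). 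In type $A$, where $\theta=\Ad(t)$ with $t=\mathrm{diag}[1,\dots,1,-1]$ acts trivially on $\fh_n$, a short computation with the flag $\mathcal{F}_i$ of \eqref{eq:flagi} identifies the simple roots of $\fb_i$ and shows that exactly the ones labelled $\alpha_{i-1}$ and $\alpha_i$ in the pinning have root vectors outside $\fk$, giving the closed-orbit part of~(1). In the orthogonal cases $\fb_+$ and $\fb_-=s_{\alpha_l}(\fb_+)$ are $\theta$-stable, and the action of $\theta$ on roots recalled in Section~\ref{ss:realization} shows at once that $\alpha_l$ is imaginary noncompact and the other simple roots imaginary compact for $\fso(2l+1)$, while $\alpha_{l-1}$ and $\alpha_l$ are interchanged by $\theta$, hence complex stable, with the rest compact, for $\fso(2l)$; this gives the closed-orbit parts of~(2) and~(3).

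To reach the non-closed orbits I would propagate along the monoid chains using the combinatorial rules of Richardson--Springer \cite{RS}, which express the type of $\alpha$ for $m(s_\beta)*Q$ in terms of its type for $Q$ and the Cartan pairing $\langle\alpha,\beta^\vee\rangle$: a Cayley transform at a noncompact imaginary root $\beta$ turns $\beta$ itself into a real root and merges a neighbouring imaginary root into a complex stable root, while the subsequent cross action pushes a complex stable root to complex unstable and promotes the next root of the chain to complex stable. Running these rules along $Q_i\to Q_{i,j}$ and $Q_+\to Q_i$ produces the remaining assertions of~(1),~(2),~(3). Alternatively one may argue directly with the flag representatives $\fb_{i,j}$ and $\fb_i$, since there $p_\alpha^{-1}p_\alpha(Q)$ is a $\PR^1$ of flags differing only in a single step whose $K$-orbit decomposition is an elementary linear-algebra computation.

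The hard part is not any individual step but the bookkeeping: one must match the standard pinning $\{\alpha_k\}$ against the simple roots of each representative Borel, keep track of the Weyl-group and Cayley-transform representatives $\hv$ occurring in \eqref{eq:bij}, \eqref{eq:sooddboreli}, \eqref{eq:soevenboreli}, and in the orthogonal cases handle the relabelled basis $\{e_{\pm i},e_0\}$ and the two families of maximal isotropic flags for $\fso(2l)$. These verifications are carried out in detail in Examples~4.16 and~4.30 of \cite{CEexp} for $\fgl(n)$ and in Propositions~2.23 and~2.24 of \cite{CEspherical} for $\fso(2l+1)$ and $\fso(2l)$, and are consistent with the tables on page~92 of \cite{Collingwood}; assembling them establishes the Proposition.
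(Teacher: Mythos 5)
The paper gives no proof of this proposition beyond the sentence preceding it, which cites the tables on page 92 of \cite{Collingwood}, Examples 4.16 and 4.30 of \cite{CEexp} for $\fgl(n)$, and Propositions 2.23 and 2.24 of \cite{CEspherical} for the orthogonal cases --- exactly the sources you invoke at the end. Your sketch (reading off the types at the $\theta$-stable Borel subalgebras of the closed orbits and then propagating along the monoid chains, or equivalently computing directly with the flag representatives $\fb_i$, $\fb_{i,j}$) is the standard argument carried out in those references, and it is correct; so your proposal matches, and indeed elaborates on, the paper's approach.
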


We use Proposition \ref{prop:roottypesforK} to study the relation between roots $\alpha\in\Pi_{\fg}$ 
such that $\ms*Q_{K}=Q_{K}$ and the special parabolic subalgebras $\fr\subset\fg$ constructed in 
Theorem \ref{thm:specials}.   Let $\tilde{w}\in W$ be given as follows:  
 \begin{equation}\label{eq:guyinW}
\tilde{w}=\left\lbrace\begin{array}{ccc} w_{i}=(n, n-1, \dots , i+1, i) \mbox{ if } \fg=\fgl(n), &  \mbox { and } &Q_{K}=Q_{i,j} \mbox{ or } Q_{i}\\
\mbox{ id }  \mbox{ if } \fg=\fso(n), & \mbox{ and }& Q_{K}=Q_{i} \mbox{ or } \,Q_{+}\\ 
 s_{\alpha_{l}}  \mbox{ if } \fg=\fso(2l+1), &\mbox{ and }  &Q_{K}=Q_{-}\end{array}\right\rbrace,
 \end{equation}
 and let $\dtw$ be a fixed representative of $\tilde{w}$.
Then it follows from the construction of the special parabolic subalgebra $\fr$ in Theorem 
\ref{thm:specials} that $\fr={\tilde{w}}(\fp_{S})$, where 
$\fp_{S}$ is a standard parabolic subalgebra of $\fg$ defined by the subset
$S \subset \Pi_{\fg}$  given by: 
\begin{equation}\label{eq:rootsforLevi}
S=\left\lbrace\begin{array}{cc}  \{\alpha_{i},\dots, \alpha_{j-1}\} \mbox{ for } \fg=\fgl(n), & Q_{K}=Q_{i,j}\\
\{\alpha_{i},\dots , \alpha_{l}\} \mbox{ for } \fg=\fso(2l), & Q_{K}=Q_{i}\\ 
\{\alpha_{i+1},\dots, \alpha_{l}\} \mbox{ for } \fg=\fso(2l+1), & Q_{K}=Q_{i}\end{array}\right\rbrace,
\end{equation}
and where $S=\emptyset$ and $\fr$ is a Borel if $Q_{K}$ is closed.
Proposition \ref{prop:roottypesforK} implies the following observation.  
\begin{cor}\label{c:Leviandroots}
Let $S\subset\Pi_{\fg}$ be the subset of simple roots given in
(\ref{eq:rootsforLevi}).  
Then 
\begin{enumerate}
\item If $\alpha\in S$, $\ms*Q_{K}=Q_{K}$.  
\item If $\alpha\in \Pi_{\fg}\setminus S$ and $\ms*Q_{K}=Q_{K}$, then 
$\alpha$ is compact for $Q_{K}$.  
\end{enumerate}
\end{cor}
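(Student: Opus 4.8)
The plan is to deduce Corollary \ref{c:Leviandroots} directly from Proposition \ref{prop:roottypesforK} by a case-by-case check, using the explicit description of $S$ in \eqref{eq:rootsforLevi}. For Part (1), I would argue as follows. If $\alpha \in S$, then by construction $\fg_{\pm\alpha}$ lies in the Levi subalgebra $\fm_S$, and since $\fr = \tilde{w}(\fp_S)$ with $\tilde{w}$ as in \eqref{eq:guyinW}, the special parabolic $\fr$ and the Borel $\fb$ share the property that $\alpha$ contributes to the semisimple part of a Levi of $\fr$. Concretely, in each type the roots in $S$ are precisely the simple roots that, together with the action of $K \cap L$, realize the open $K\cap L$-orbit on $\B_\fl$; by Propositions \ref{prop:typeAflag}, \ref{prop_sooddflag}, \ref{prop_soevenflag} the relevant orbit $Q_K = Q_{i,j}$ or $Q_i$ is obtained by the monoid action along exactly the simple roots in the complement of $S$ (shifted appropriately), so applying $m(s_\alpha)$ for $\alpha \in S$ cannot increase the dimension. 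Rather than this indirect route, the cleanest approach is simply to read off from Proposition \ref{prop:roottypesforK} the type of each $\alpha \in S$: in type $A$ with $Q_K = Q_{i,j}$, the set $S = \{\alpha_i,\dots,\alpha_{j-1}\}$ consists of $\alpha_i, \alpha_{j-1}$ (complex unstable when $j > i+1$, so $m(s_\alpha)*Q_K = Q_K$), $\alpha_i$ real when $j = i+1$ (again $m(s_\alpha)*Q_K=Q_K$), and the remaining $\alpha_k$ with $i < k < j-1$ compact; in all these cases $m(s_\alpha)*Q_K = Q_K$ by Definition \ref{dfn:roottype}. The type $B$ and $D$ cases are handled identically: for $\fg = \fso(2l+1)$, $Q_K = Q_i$, $S = \{\alpha_{i+1},\dots,\alpha_l\}$, where $\alpha_{i+1}$ is complex unstable (or real if $i = l-1$) and $\alpha_{i+2},\dots,\alpha_l$ are compact, all of which satisfy $m(s_\alpha)*Q_K = Q_K$; and similarly for $\fg = \fso(2l)$, $Q_K = Q_i$, $S = \{\alpha_i,\dots,\alpha_l\}$, where $\alpha_i$ is complex unstable, $\alpha_l$ is complex unstable when $i = l-1$, and the rest are compact. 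When $Q_K$ is closed, $S = \emptyset$ and there is nothing to check.

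For Part (2), suppose $\alpha \in \Pi_\fg \setminus S$ with $m(s_\alpha)*Q_K = Q_K$. Again I would go through the list in Proposition \ref{prop:roottypesforK}. The point is that for $\alpha \notin S$, the possible types of $\alpha$ for $Q_K$ are: complex stable, non-compact, or compact (the real and complex unstable roots all lie in $S$, as the case analysis in Part (1) shows). If $\alpha$ is complex stable or non-compact, then by Definition \ref{dfn:roottype} we have $m(s_\alpha)*Q_K \neq Q_K$, contradicting our hypothesis. Hence $\alpha$ must be compact for $Q_K$. Concretely: in type $A$ with $Q_K = Q_{i,j}$, the roots $\alpha \notin S$ are $\alpha_{i-1}$ (complex stable), $\alpha_j$ (complex stable), and all $\alpha_k$ with $k < i-1$ or $k > j$ (compact); so if $m(s_\alpha)*Q_K = Q_K$ then $\alpha \in \{\alpha_k : k < i-1 \text{ or } k > j\}$, all compact. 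When $Q_K = Q_i$ is closed, $\alpha_{i-1}, \alpha_i$ are non-compact and the rest compact, so again $m(s_\alpha)*Q_K = Q_K$ forces compactness. The orthogonal cases are analogous using the corresponding parts of Proposition \ref{prop:roottypesforK}.

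I do not expect any serious obstacle here; this corollary is essentially a bookkeeping exercise matching the combinatorial data \eqref{eq:rootsforLevi} for $S$ against the root-type classification of Proposition \ref{prop:roottypesforK}. The only mild subtlety is keeping the index shifts straight between the description of $S$ (which uses the "interior" simple roots appearing in the special parabolic) and the statement of Proposition \ref{prop:roottypesforK} (which singles out boundary roots $\alpha_{i-1}$, $\alpha_j$, $\alpha_i$, $\alpha_{j-1}$ etc.), and in the $\fso(2l+1)$ case remembering that $S = \{\alpha_{i+1},\dots,\alpha_l\}$ starts at $\alpha_{i+1}$ while the complex stable root is $\alpha_i \notin S$. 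Once these indexings are lined up, both parts follow immediately from Definition \ref{dfn:roottype}, which says $m(s_\alpha)*Q_K = Q_K$ precisely when $\alpha$ is complex unstable, real, or compact for $Q_K$, and $m(s_\alpha)*Q_K \neq Q_K$ when $\alpha$ is complex stable or non-compact.
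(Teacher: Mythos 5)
Your proposal is correct and is exactly the argument the paper intends: the corollary is stated as an immediate consequence of Proposition \ref{prop:roottypesforK}, obtained by matching the explicit set $S$ from \eqref{eq:rootsforLevi} against the root-type classification and invoking Definition \ref{dfn:roottype}. Your case-by-case bookkeeping (including the index shifts and the boundary cases $j=i+1$ and $i=l-1$) lines up with what the paper leaves to the reader.
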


We can now describe the right monoid action on a $B_{n-1}$-orbit $Q\subset Q_{K}$ by 
a simple root $\alpha\in \Pi_{\fg}$ with $\ms*Q_{K}=Q_{K}$.  Recalling notation from  Theorem \ref{thm:Kintertwiners}, we let $Q=\mathcal{O}(Q_{\fr}, Q_{\fl})=B_{n-1}\cdot \Ad(\dotw)\Ad(\ell \hat{v})\fb_+$, where $w\in W_{K}^{K\cap R}$ and $\ell\in K\cap L$, with
$Q_{\fr}=B_{n-1}\cdot w(\fr)$ and $Q_{\fl}=(B_{n-1}\cap L)\cdot\Ad(\ell)(\fb_{\hv}\cap \fl)$, and $\hat{v}$ is given in Propositions \ref{prop:typeAflag}, \ref{prop_sooddflag}, \ref{prop_soevenflag}

\begin{thm}\label{thm:rightbundleaction}

\noindent 

Suppose $\alpha\in\Pi_{\fg}$.  
\begin{enumerate}
\item If $\alpha\in S$, then 
\begin{equation}\label{eq:rightfibreaction}
\ms*\mathcal{O}(Q_{\fr}, Q_{\fl})=\mathcal{O}(Q_{\fr}, m(s_{\tilde{w}(\alpha)})*Q_{\fl}).  
\end{equation}
\item If $\alpha\in\Pi_{\fg}\setminus S$ and 
$\ms*Q_{K}=Q_K$, then  $\tilde{w}(\alpha)\in\Pi_{\fk}$ is a standard simple root strongly orthogonal to the roots of $S$, and 
\begin{equation}\label{eq:rightbaseaction}
\ms*\mathcal{O}(Q_{\fr}, Q_{\fl})=\mathcal{O}(m(s_{\tilde{w}(\alpha)})*Q_{\fr}, Q_{\fl}),
\end{equation}
where $m(s_{\tilde{w}(\alpha)})*Q_{\fr}$ is the right monoid action of the simple root $\tilde{w}(\alpha)$ on 
the $B_{n-1}$-orbit $Q_{\fr}$ in the partial flag variety $Q_{K,\fr}=K/K\cap R$ of $\fk$ defined in Part (2) of Remark \ref{r:bnonemonoid}.

\end{enumerate}
\end{thm}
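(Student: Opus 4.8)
The overall strategy is to compute the right monoid action $m(s_\alpha)$ on $Q = \mathcal{O}(Q_{\fr}, Q_{\fl}) = B_{n-1}\cdot \Ad(\dotw\ell\hv)\fb_+$ by analyzing the variety $B_{n-1}\,\Ad(\dotw\ell\hv)\,P_\alpha/B_n$ from Equation \eqref{eq:rightmonoid}, and to transport the computation across the conjugation by $\dtw$ so that it takes place inside the special parabolic $R = \tilde{w}(P_S)$ or on the base $K/K\cap R$. First I would note that since $\alpha\in S$ (case (1)) or $\alpha$ is compact for $Q_K$ and $\tilde w(\alpha)\in\Pi_{\fk}$ is strongly orthogonal to $S$ (case (2), using Corollary \ref{c:Leviandroots} together with Proposition \ref{prop:roottypesforK} and the explicit description of $\tilde w$ in \eqref{eq:guyinW} and $S$ in \eqref{eq:rootsforLevi}), the parabolic $P_\alpha$ of $G$ for a root $\alpha\in S$ is contained in $P_S$, so that $\tilde w(P_\alpha)\subset \tilde w(P_S) = R$; hence the $P_\alpha$-action on the flag variety takes place inside the fibres $R/B \cong \B_{\fl}$ of $\pi_{\fr}$. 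In case (2), by contrast, $\tilde w(\alpha)$ is a simple root of $\fk$ and the relevant strongly-orthogonal $\mathbb P^1$-bundle is the one of Remark \ref{r:stronglyorthogonal}, projecting $K/K\cap R$ down one further step.

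For case (1): writing $P_\alpha \subset P_S$ and conjugating, $B_{n-1}\,\Ad(\dotw\ell\hv)P_\alpha/B_n = B_{n-1}\,\Ad(\dotw)\bigl(\Ad(\ell)\Ad(\hv)(\fb_+\cdot P_\alpha)\bigr)$, where $\hv\in L$ (by the explicit forms in Propositions \ref{prop:typeAflag}, \ref{prop_sooddflag}, \ref{prop_soevenflag}, since the $u_{\alpha_l}$ and $\dot s_{\alpha_j}$ factors lie in $L$ as shown in the proof of Theorem \ref{thm:specials}) and $\ell\in K\cap L$. Since $\tilde w(\alpha)\in S$-Levi means $\Adw$ fixes $R$ setwise and the diagonal torus of $K$ normalizes $L$ and $U$, the action of $B_{n-1}$ on this variety is controlled fibrewise by $B_{n-1}\cap w(L)$ acting on $\B_{w(\fl)}$, with the base $Q_{\fr}$ unchanged — this is exactly the bundle splitting used in the proof of Theorem \ref{thm:Kintertwiners} (Equations \eqref{eq:Ybundle}–\eqref{eq:Yfibreonly}), now applied to the right $P_\alpha$-translate. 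Translating by $\dtw^{-1}$ as prescribed there, the fibre orbit becomes the right monoid action $m(s_{\tilde w(\alpha)})*Q_{\fl}$ inside the open $K\cap L$-orbit on $\B_{\fl}$, yielding \eqref{eq:rightfibreaction}. One subtlety to check: that $\alpha$ really is non-compact-type for the fibre orbit, i.e.\ that the right $P_\alpha$-bundle is genuinely $\PR^1$ over $Q_{\fr}$ (not collapsed), which follows because $\alpha$ is complex stable / non-compact for $Q_K$ by Proposition \ref{prop:roottypesforK} and by the open-ness of the fibre orbit.

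For case (2): here the key computational input is that $\tilde w(\alpha)\in\Pi_{\fk}$ is strongly orthogonal to the roots of $S$ — this I would verify directly in each type from \eqref{eq:guyinW} and \eqref{eq:rootsforLevi} (in type A, $\tilde w = w_i$ sends the relevant "outer" simple roots $\alpha_{i-1},\alpha_j$ to simple roots of $\fgl(n-1)$ outside the $\{\alpha_i,\dots,\alpha_{j-1}\}$ block after the cycle shift; in types B/D similarly). Given this, Remark \ref{r:stronglyorthogonal} gives a genuine $\PR^1$-bundle $G/P_S \to G/P_{S\cup\{\alpha\}}$, and conjugating by $\dtw$ identifies $B_{n-1}\,\Ad(\dotw\ell\hv)P_\alpha/B_n$ as a bundle over the base whose $B_{n-1}$-orbits match those of $B_{n-1}\cdot \dot{\tilde w}(\alpha)$ acting on $K/K\cap R$; since $\ell,\hv\in K\cap L$ act trivially on the base, the fibre orbit $Q_{\fl}$ is unchanged and the open orbit downstairs is $m(s_{\tilde w(\alpha)})*Q_{\fr}$ in the sense of Remark \ref{r:bnonemonoid}(2), giving \eqref{eq:rightbaseaction}. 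The main obstacle I anticipate is the strong-orthogonality verification in case (2) and, relatedly, making precise that conjugation by $\dtw$ intertwines the right $G$-monoid action $m(s_\alpha)$ on $\B_n$ with the right $K$-monoid action $m(s_{\tilde w(\alpha)})$ of Remark \ref{r:bnonemonoid}(2) on $K/K\cap R$ — this requires tracking carefully that $\Adw$ carries the simple parabolic $P_\alpha\supset B_n$ to the parabolic $\tilde w(\alpha)$-data on the $K$-side and that the bundle projection is $B_{n-1}$-equivariant through the conjugation, using the same open-morphism argument (Section 12.2 of \cite{Hum}) as in the proof of Theorem \ref{thm:Kintertwiners}.
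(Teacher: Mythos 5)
Your overall strategy coincides with the paper's: analyze the variety $B_{n-1}\cdot\Ad(\dotw\ell\hv)(P_{\alpha}\cdot\fb_{+})$ coming from \eqref{eq:rightmonoid}, showing in case (1) that it stays inside a fibre of $\pi_{\fr}$ and in case (2) that it fibres over the variety computing a right monoid action on the base. Case (1) is essentially the paper's argument, though your assertion that $\hv\in L$ is false in type $A$ (there $\hv=\dot{w}_{i}u_{\alpha_{i}}\dot{s}_{\alpha_{i+1}}\cdots\dot{s}_{\alpha_{j-1}}$ and $\dot{w}_{i}\notin L$); what the paper actually uses is the factorization $\hv=\dtw p$ with $p\in M_{S}$ and $\fr=\Ad(\hv)\fp_{S}=\tilde{w}(\fp_{S})$, which is what lets one rewrite the fibre as $(B_{n-1}\cap L)\,\ell\,\tilde{w}p\tilde{w}^{-1}(P_{\tilde{w}(\alpha)}\cap L)/(\Ad(\tilde{w})B_{+}\cap L)$ and recognize $m(s_{\tilde{w}(\alpha)})*Q_{\fl}$ there.

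In case (2) there is a genuine gap. You invoke the strongly orthogonal $\PR^{1}$-bundle $G/P_{S}\to G/P_{S\cup\{\alpha\}}$ and then assert that the $B_{n-1}$-orbits in $B_{n-1}\cdot\Ad(\dotw\ell\hv)(P_{\alpha}\cdot\fb_{+})$ ``match'' orbits on $K/K\cap R$; but that bundle lives on the $G$-side, its fibre is a $\PR^{1}$ in $G/R$, and there is no reason a priori that the image $\pi_{\fr}(\mathcal{Y})$ lies inside the $K$-orbit $K\cdot\fr\cong K/K\cap R$, which is exactly what identifying it with the right monoid action of Remark \ref{r:bnonemonoid}(2) requires. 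The paper closes this using Corollary \ref{c:Leviandroots}(2): $\alpha$ is \emph{compact} for $Q_{K}$, so $P_{\Ad(\hv)\alpha}\cdot\fb$ is a single orbit of the Levi factor $M_{\tilde{w}(\alpha)}=\Ad(\tilde{w})M_{\alpha}$, which lies in $K$ and commutes with $K\cap L$ by strong orthogonality; this yields $\pi_{\fr}(\mathcal{Y})=B_{n-1}wM_{\tilde{w}(\alpha)}(K\cap R)/(K\cap R)=B_{n-1}wS_{\Pi_{\fl\cap\fk}\cup\tilde{w}(\alpha)}(K\cap R)/(K\cap R)$, which is precisely the variety defining $m(s_{\tilde{w}(\alpha)})*Q_{\fr}$, and simultaneously shows the fibre $Q_{\fl}$ is untouched. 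You cite compactness in your setup but never deploy it; instead you lean on the false claim that $\ell,\hv\in K\cap L$ ``act trivially on the base'' ($\hv\notin K$, since it contains a Cayley transform factor). Relatedly, the identity $\Ad(\hv)\alpha=\tilde{w}(\alpha)$ requires that $u_{\alpha_{i}}$ (resp.\ $u_{\alpha_{l}}$) fix $\alpha$, which the paper deduces from strong orthogonality via Proposition 6.72 of \cite{Knapp02}; this is the precise content of the ``intertwining'' you flag as an obstacle but do not resolve.
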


\begin{proof}

First, suppose that $\alpha\in S$.  
Then by Corollary \ref{c:Leviandroots},  $\ms*Q_{K}=Q_{K}$.  Recall that 
by Equation (\ref{eq:rightmonoid}) $\ms*Q$ is the open $B_{n-1}$-orbit in the variety 
$\mathcal{Z}:=B_{n-1}\dotw\ell\hv P_{\alpha}/B_{+}$.   
Since $\alpha\in S$, $P_{\alpha}\subset P_{S}$.  Further, the representative 
$\hv=\dtw p$, where $\dtw$ is the above representative of $\tilde{w}$ given by (\ref{eq:guyinW}) and $p\in M_S$ (see Equations (\ref{eq:bij}), (\ref{eq:sooddboreli}), and (\ref{eq:soevenboreli})).  
It follows that the special parabolic subalgebra $\fr=\Ad(\hv)\fp_{S}$.  Thus, $\pi_{\fr}:G/B \to G/R$ maps  $\mathcal{Z}$ to $B_{n-1}\cdot w(\fr)=Q_{\fr}$ and hence 
$\pi_{\fr}:\mathcal{Z} \to Q_{\fr}$ is a $B_{n-1}$-equivariant fibre bundle. We call the fibre $F$, and
using 
 Equation \eqref{eq:Bkbundle} and Remark \ref{r:goodreps}, we can identify $F = (B_{n-1}\cap w(L))\cdot \Ad(\dw \ell \hat{v})(P_{\alpha}\cdot \fb_+)$. Using Equation \eqref{eq:isBorel}, we see $F$ is isomorphic to 
\begin{equation}\label{eq:fibreZ}
(B_{n-1}\cap L)\cdot \Ad(\ell\hv) (P_{\alpha} \cdot \fb_+) \cong (B_{n-1}\cap L) \ell\hv(P_{\alpha}\cap M_S)/ (B_{+}\cap M_S).
\end{equation}  
The open $B_{n-1}$-orbit in $\mathcal{Z}$ fibres over $Q_{\fr}$ with fibre isomorphic to the open $B_{n-1}\cap L$-orbit in the variety $F$.
Using the decomposition $\hv=\dtw p$, we can rewrite $F$ as 
\begin{equation}\label{eq:fibreZ2}
(B_{n-1}\cap L) \ell \tilde{w}p\tilde{w}^{-1} (P_{\tilde{w}(\alpha)} \cap L)/(\Ad(\tilde{w}) B_{+}\cap L).  
\end{equation}
But the open $B_{n-1}\cap L$-orbit in (\ref{eq:fibreZ2}) is exactly $m(s_{\tilde{w}(\alpha)})*Q_{\fl}$, using Equation (\ref{eq:rightmonoid}).  Equation (\ref{eq:rightfibreaction}) follows.  

Let $\alpha\in\Pi_{\fg}\setminus S$ with $\ms*Q_{K}=Q_{K}$.  
By (\ref{eq:rightmonoid}), $\ms*Q_{K}$ is the open $B_{n-1}$-orbit in the variety 
\begin{equation}\label{eq:Yvariety}
\mathcal{Y}=B_{n-1}\cdot\Ad(\dotw\ell\hv) (P_{\alpha}\cdot \fb_+)=B_{n-1}\cdot\Ad(\dotw\ell) (P_{\Ad(\hv)\alpha}\cdot \fb).
\end{equation}
We begin by analyzing the homogeneous space $P_{\Ad(\hv)\alpha} /B$.   
First, consider the root $\Ad(\hv)\alpha$ of $\Ad(\hv)\fh_{n}$.  We claim 
\begin{equation}\label{eq:rootclaim}
\Ad(\hv)\alpha=\tilde{w}(\alpha)\in \Pi_{\fk},
\end{equation} 
where $\tilde{w}\in W$ is given by (\ref{eq:guyinW}).  
We show (\ref{eq:rootclaim}) in the case where $\fg=\fgl(n)$.    
If $Q_{K}=Q_{i}$ is a closed $K$-orbit as in Part (2) of Proposition \ref{prop:typeAflag}, 
then $\hv=w_{i}=\tilde{w}$.  Since $\ms*Q_{K}=Q_{K}$, then by Proposition \ref{prop:roottypesforK}, 
$\alpha=\alpha_{j},$ $j\neq i-1,\, i$.  It is routine to check that 
$\tilde{w}(\alpha_{j})=w_{i}(\alpha_{j})=\alpha_{j-1}$ for $i<j\leq n$ and $\tilde{w}(\alpha_{j})=\alpha_{j}$ for $j<i-1.$  
In either case, $\tilde{w}(\alpha)\in\Pi_{\fk}=\{ \alpha_1, \dots, \alpha_{n-2} \}$.   
Now suppose $Q_{K}=Q_{i,j}$ is not closed as in Part (3) of 
Proposition \ref{prop:typeAflag}.  Then $\hv=w_{i}u_{\alpha_{i+1}}s_{\alpha_{i+1}}\dots s_{\alpha_{j-1}}$ by Equation (\ref{eq:bij}).  
Then since $\alpha\in \Pi_{\fg}\setminus S$ with $\ms*Q_{K}=Q_{K}$, then it follows from 
Definition \ref{dfn:strongortho} and Proposition \ref{prop:roottypesforK} that $\alpha$ is strongly orthogonal to the roots of 
$S=\{\alpha_{i},\dots, \alpha_{j-1}\}$ (see Equation (\ref{eq:rootsforLevi})).  
In particular, $\alpha$ is orthogonal to the roots of $S$, so 
$\Ad(\hv)\alpha=\tilde{w}u_{\alpha_{i}}(\alpha)$.   Further, 
since $\alpha$ is strongly orthogonal to $\alpha_{i}$, Proposiiton 6.72 of \cite{Knapp02} implies
that $u_{\alpha_{i}}(\alpha)=\alpha$, and we conclude that $\Ad(\hv)\alpha=\tilde{w}(\alpha)$.  The remainder of the argument proceeds as 
in the case where $Q_{K}=Q_{i}$ is closed, and the claim in (\ref{eq:rootclaim}) is established in the $\fgl(n)$ case.   The orthogonal cases are simpler and can be handled in an analogous fashion.

Now it follows from Part (2) of Corollary \ref{c:Leviandroots} that $\alpha$ is compact for $Q_{K}$.  It then follows
from our discussion in Section \ref{ss:Korbits} that the space 
$P_{\tilde{w}(\alpha)}/B$ is a single orbit of a Levi factor of $P_{\tilde{w}(\alpha)}$. 
Let  $M_{\alpha}\subset P_{\alpha}$ be the $H_n$-stable Levi factor, so that
$M_{\tilde{w}(\alpha)}:=\Ad(\tilde{w})M_{\alpha}\subset K$ is a Levi factor of $P_{\tilde{w}(\alpha)}$.  Thus, 
the variety $\mathcal{Y}$ in Equation (\ref{eq:Yvariety}) equals $\mathcal{Y}=B_{n-1}\cdot \Ad(\dotw \ell) (M_{\tilde{w}(\alpha)}\cdot \fb$).  
Since $\alpha$ is strongly orthogonal to the roots of $S$, it follows that $\tilde{w}(\alpha)$ is strongly orthogonal 
to the roots of $\fl$. Since $L\cap K\subset L$, it follows that the groups $M_{\tilde{w}(\alpha)}$ and $L\cap K$ commute.   
Therefore, 
\begin{equation}\label{eq:Yimage}
\pi_{\fr}(\mathcal{Y})=B_{n-1} w M_{\tilde{w}(\alpha)} (K\cap R)/ (K\cap R).
\end{equation}
If $J\subset \Pi_{\fk}$, we let $S_{J}\subset K$ be the standard parabolic subgroup of $K$ given by the subset 
$J$ of $\Pi_{\fk}$.  Now we claim that 
\begin{equation}\label{eq:varietiesagree}
\pi_{\fr}(\mathcal{Y})=B_{n-1} w S_{\Pi_{\fl\cap\fk}\cup \tilde{w}(\alpha)} (K\cap R)/(K\cap R),
\end{equation}
Indeed,
\begin{equation*}
\begin{split}
B_{n-1} w S_{\Pi_{\fl\cap\fk}\cup \tilde{w}(\alpha)}(K\cap R)/(K\cap R)&=B_{n-1} w S_{\tilde{w}(\alpha)}(K\cap R)/(K\cap R)\\
 &=B_{n-1} w M_{\tilde{w}(\alpha)}(K\cap R)/(K\cap R)\\
 &=\pi_{\fr}(\mathcal{Y}).
\end{split}
\end{equation*}

Now by Equation (\ref{eq:smallrightmonoid}), $m(s_{\tilde{w}(\alpha)})*Q_{\fr}$ is the open $B_{n-1}$-orbit 
in the variety $B_{n-1} w  S_{\Pi_{\fl\cap\fk}\cup \tilde{w}(\alpha)} (K\cap R)/(K\cap R)=\pi_{\fr}(\mathcal{Y})$.   
Since the group $M_{\tilde{w}(\alpha)}$ acts trivially on $\fl\cap\fk$,  the variety 
$\mathcal{Y}$ has the structure of a $B_{n-1}$-equivariant fibre bundle with smooth base $\pi_{\fr}(\mathcal{Y})$ and 
fibre $Q_{\fl}$.  It follows that $\pi_{\fr}|_{\mathcal{Y}}$ is a flat morphism and is therefore open by Exercise III.9.1 of \cite{Ha}. 
Thus, $\pi_{\fr}(\ms*Q)=m(s_{\tilde{w}(\alpha)}) * Q_{\fr}$ and Equation (\ref{eq:rightbaseaction}) now follows. 
\end{proof}

It follows from Theorems \ref{thm:Kintertwiners} and \ref{thm:rightbundleaction} that 
to understand the monoid action on $Q=\Orb(Q_{\fr}, Q_{\fl})\subset Q_{K}$ by roots $\alpha\in\Pi_{\fg}\cup\Pi_{\fk}$ such that 
$\ms*Q_{K}=Q_{K}$, it suffices to understand the monoid action 
on $B_{n-1}$-orbits in $K/(K\cap R)$ discussed in Remark \ref{r:bnonemonoid}
as well as the monoid action on $Q_{\fl}$.  The former is well understood, and in the next result, we use 
 the correspondence $Q \mapsto Q^{op}$ in Equation (\ref{eq:correspondence2}) to understand the latter.  
For the statement of the next result, we recall the particular representative $\tilde{\fb}$ of the open $K$-orbit that
is used in the proof of Theorem \ref{thm:openorbit} to establish the correspondence in (\ref{eq:correspondence2}).  
We let $\tildeQK=K\cdot \tilde{\fb}$ where $\tilde{\fb}$ is the Borel subalgebra of 
$\fg$ which stabilizes the flag $\tilde{\mathcal{F}}$: 
\begin{equation} \label{eq:secondopenflag}
\tilde{\mathcal{F}}=\left\lbrace\begin{array}{cc} (e_{n-1}+e_{n}\subset e_{1}\subset e_{2}\subset\dots\subset e_{n-2}\subset e_{n}) & \mbox{ if } \fg=\fgl(n),\\
(e_{l}\subset e_{1}\subset e_{2}\subset\dots\subset e_{l-1}) & \mbox{ if } \fg=\fso(2l)     \\ 
 (\imath e_{l}+\sqrt{2}\imath e_{0}-\imath e_{-l}\subset e_{1}\subset e_{2}\subset \dots \subset e_{l-1})  & \mbox{ if } \fg=\fso(2l+1) \end{array}\right\rbrace,
\end{equation}


\begin{thm}\label{thm:openinter}
Let $Q=B_{n-1}\cdot \Ad(k)\tilde{\fb}$ be a 
$B_{n-1}$-orbit in the open $K$-orbit ${\tildeQ}_K$ on $\B_{n}$, 
and let $Q^{op}=B_{n-2}\cdot \Ad(k^{-1})\fb_{n-1}$ be the corresponding 
$B_{n-2}$-orbit on $\B_{n-1}$ as in Remark \ref{r:theycorrespond}.  

Let $\alpha\in\Pi_{\fk}$, then  
\begin{equation}\label{eq:firstequiv}
(\ms*Q)^{op} = \ms*Q^{op}\subset\B_{n-1},
\end{equation}
where $\ms*Q$ denotes the left monoid action of $s_{\alpha}$ on the $B_{n-1}$-orbit $Q$ in $\B_n$, 
and $\ms*Q^{op}$ denotes the right monoid action of $s_{\alpha}$ on the $B_{n-2}$-orbit $Q^{op}$ in $\B_{n-1}$.

Let $\alpha=\alpha_{j}\in \Pi_{\fg}$ be a simple root which is compact for ${\tilde{Q}}_K$.  
Then  
\begin{equation}\label{eq:secondequiv}
(m(s_{\alpha_{j}})*Q)^{op} = m(s_{\alpha_{j-1}})*Q^{op}, 
\end{equation}
where $m(s_{\alpha_{j}})*Q$ is the right monoid action of $s_{\alpha_{j}}$ on $Q$ 
and $m(s_{\alpha_{j-1}})*Q^{op}$ is the left monoid action of $s_{\alpha_{j-1}}$ on $Q^{op}$ and $\alpha_{j-1}\in\Pi_{\fk_{n-2}}$ is a standard simple root.


Moreover, the correspondences in Equations (\ref{eq:firstequiv}) and (\ref{eq:secondequiv})
preserve root types.  That is to say that $\alpha$ is complex stable, complex unstable, non-compact, etc
for $Q$ if and only if $\alpha$ is complex stable, unstable, non-compact, etc for $Q^{op}$ in (\ref{eq:firstequiv}) 
and similarly for $\alpha_{j}$ and $\alpha_{j-1}$ in (\ref{eq:secondequiv}).


\end{thm}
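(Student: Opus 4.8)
The plan is to reduce both equivalences to the geometric description of the correspondence $Q \mapsto Q^{op}$ given in Remark \ref{r:theycorrespond}, namely $Q^{op} = \pi(\psi(q^{-1}(Q)))$, where $q:K \to \tildeQK \cong K/B_{n-2}$, $\pi:K \to \B_{n-1}$ are the natural projections and $\psi$ is inversion on $K$. The first thing I would do is recast the left monoid action $\ms*Q$ for $\alpha \in \Pi_{\fk}$ using Equation \eqref{eq:leftmonoidactionreal}: $\ms*Q$ is the open $B_{n-1}$-orbit in $P_\alpha \cdot Q$, where $P_\alpha$ is the minimal parabolic of $K$ associated to $\alpha$. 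Since $q$ is a $K$-equivariant map and $P_\alpha \cdot Q$ pulls back under $q^{-1}$ to $P_\alpha \cdot q^{-1}(Q)$ (a $P_\alpha$-stable, $B_{n-1}$-stable subset of $K$), applying $\psi$ turns left multiplication by $P_\alpha$ into right multiplication by $P_\alpha$, and then $\pi$ pushes this to $q^{-1}(Q)^{op} \cdot P_\alpha$ inside $\B_{n-1} = K/B_{n-2}$, which is exactly the variety whose open $B_{n-2}$-orbit is the right monoid action $\ms*Q^{op}$ by Equation \eqref{eq:rightmonoid} (applied to $K=G_{n-1}$ in place of $G$, with $B_{n-2}$ in place of $B_n$). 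Because $\pi$, $\psi$, $q$ are all smooth surjections with irreducible fibres, ``open $B_{n-1}$-orbit'' on the source side corresponds to ``open $B_{n-2}$-orbit'' on the target side, giving \eqref{eq:firstequiv}.

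For \eqref{eq:secondequiv} the mechanism is the same but the bookkeeping on roots requires the explicit flag $\tilde{\mathcal{F}}$ of \eqref{eq:secondopenflag}. Here $\alpha_j \in \Pi_{\fg}$ is compact for $\tildeQK$, so by the discussion in Section \ref{ss:Korbits} the fibre of $\B_n \to \mathcal{P}_{\alpha_j,\fg}$ over a point of $\tildeQK$ is a single $M_{\alpha_j}$-orbit, and the right monoid action $m(s_{\alpha_j})*Q$ is the open $B_{n-1}$-orbit in $B_{n-1} \cdot \Ad(k\tilde{b})(P_{\alpha_j}\cdot \fb_+)$ where $\tilde{\fb}=\Ad(\tilde{b})\fb_+$. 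The key point is that, via the flag $\tilde{\mathcal{F}}$ (which is engineered so that the ``extra'' vector is placed at the front and the remaining vectors $e_1,\dots,e_{n-2}$ occur in their standard order, one slot shifted), the parabolic $P_{\alpha_j}$ of $G_n$ acting on $\tilde{\fb}$ matches, under the identification $\B_n|_{\tildeQK}\cong K/B_{n-2}$ used in the proof of Theorem \ref{thm:openorbit}, the right action of the parabolic $P_{\alpha_{j-1}}$ of $K$ corresponding to the shifted simple root $\alpha_{j-1}\in\Pi_{\fk}$; I would verify this by a direct computation with the flags, checking that modifying the flag $\tilde{\mathcal{F}}$ in slots $j, j+1$ corresponds to modifying the induced $K$-flag in slots $j-1,j$. (In the orthogonal cases one further checks that the compactness hypothesis forces $j$ to avoid the end of the Dynkin diagram, so the shift $\alpha_j \mapsto \alpha_{j-1}$ makes sense.) Once that identification is in place, \eqref{eq:secondequiv} follows from \eqref{eq:leftmonoidactionreal} and \eqref{eq:rightmonoid} exactly as before, with $\psi$ again swapping left and right.

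For the final assertion about root types, I would argue that the maps $\pi$, $\psi$, $q$ induce, on fibres of the relevant $\PR^1$-bundles, isomorphisms of the associated $\fsl(2)$-data: the image of $B_{n-1}$ (resp.\ $B_{n-2}$) in $S_\alpha = P_\alpha/Z_\alpha U_\alpha$ is carried to the image of $B_{n-2}$ (resp.\ $B_{n-1}$) in the corresponding rank-one quotient under the inversion $\psi$, and inversion on a group locally isomorphic to $SL(2)$ carries a torus to a torus, a Borel to a Borel, and all of $SL(2)$ to all of $SL(2)$. Hence the trichotomy of Section \ref{ss:Korbits} (complex / noncompact-real / compact) and the stable/unstable (resp.\ noncompact/real) distinction within each case is preserved, which is precisely the claim.

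The main obstacle I anticipate is the explicit flag computation underlying \eqref{eq:secondequiv}: one must carefully track how the index shift $j \leftrightarrow j-1$ interacts with the three different normal forms $\tilde{\mathcal{F}}$ in \eqref{eq:secondopenflag}, and in the $\fso$-cases confirm that the special positions (the $e_l + \cdots$ vector in type $D$, the $\imath e_l + \sqrt 2 \imath e_0 - \imath e_{-l}$ vector in type $B$) do not create exceptional behavior at the ends of the root system. Everything else is formal consequence of the equivariance of $q$, $\pi$ and the left/right swap induced by $\psi$.
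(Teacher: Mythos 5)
Your proposal follows essentially the same route as the paper's proof: for \eqref{eq:firstequiv} both arguments push the variety $P_{\alpha}kB_{n-2}/B_{n-2}$ through the correspondence $\mathcal{Y}^{op}=\pi(\psi(q^{-1}(\mathcal{Y})))$, using that inversion converts left multiplication by $P_{\alpha}$ into right multiplication, and for \eqref{eq:secondequiv} both use compactness of $\alpha_j$ to replace $\Ad(kg)(P_{\alpha_j}\cdot\fb_+)$ by an orbit of $\Ad(g)P_{\alpha_j}\cap K$ and then identify that group with the simple parabolic for $\alpha_{j-1}$ by an explicit flag computation (the root-type claim is likewise handled by tracking the rank-one data through the correspondence). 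The only cosmetic slip is writing $\B_{n-1}=K/B_{n-2}$ where it should be $K/B_{n-1}$; this does not affect the argument.
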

 \begin{proof}
   Let $\alpha\in\Pi_{\fk}$.  Then by Equation \ref{eq:leftmonoidactionreal},
     we know $\ms*Q$ is the open $B_{n-1}$-orbit 
 in the variety $\mathcal{Y}_{Q}:=P_{\alpha}kB_{n-2}/B_{n-2}\subset \tilde{Q}$.  
 Under the correspondence in Equation (\ref{eq:correspondingsubvariety}), we have 
 $$
 \mathcal{Y}_{Q}^{op}=B_{n-2}k^{-1}P_{\alpha} /B_{n-1}.
 $$
 Since $\ms*Q^{op}$ is the open $B_{n-2}$-orbit in $B_{n-2}k^{-1}P_{\alpha} /B_{n-1}$, Equation (\ref{eq:firstequiv}) now 
 follows from Equation  (\ref{eq:rightmonoid}).

   
 Now we consider the right monoid action.  Let $\alpha\in\Pi_{\fg}$ be compact for the open $K$-orbit $\tildeQK=K\cdot \tilde{\fb}$.  Let $\tilde{\fb}=\Ad(g)\fb_{+}$.  
 By Equation (\ref{eq:rightmonoid}), $\ms*Q$ is the open $B_{n-1}$-orbit in the variety 
 $B_{n-1}\cdot \Ad(kg)(P_{\alpha}\cdot \fb_+)$. Since $\alpha$ is compact for $\tildeQK$,  $g P_{\alpha}/B_{+}$ is a single $K\cap \Ad(g)P_{\alpha}$-orbit, so that 
 $gP_{\alpha}/B_{+}\cong (K\cap \Ad(g)P_{\alpha})/ (K\cap \Ad(g) B_{+}).$  Therefore, 
 \begin{equation}\label{eq:redbox} 
 B_{n-1}\cdot \Ad(kg)(P_{\alpha}\cdot \fb_+) \cong B_{n-1}\cdot \Ad(k) ((\Ad(g)P_{\alpha}\cap K)\cdot\fb_{n-2}).  
 \end{equation}
 Using (\ref{eq:correspondingsubvariety}) and (\ref{eq:redbox}), it follows that 
 \begin{equation}\label{eq:redbox2}
 (B_{n-1}\cdot \Ad(kg)(P_{\alpha}\cdot \fb_{+}))^{op}= (\Ad(g)P_{\alpha}\cap K )\cdot \Ad(k^{-1}) \fb_{n-1}.  
 \end{equation} 
We now claim that the group $\Ad(g)P_{\alpha}\cap K$ in (\ref{eq:redbox2}) is the simple parabolic subgroup 
of $K_{n-2}$ corresponding to the simple root $\alpha_{j-1}$.  Equation (\ref{eq:secondequiv}) now follows from (\ref{eq:redbox2}).  We verify the claim in the case where $\fg=\fgl(n)$.  Suppose $\alpha=\alpha_{j}$ is compact for $\tildeQK =Q_{1,n}$.  By Proposition \ref{prop:roottypesforK}, it follows that  $j\neq 1, n-1$.  The image of the flag from Equation  (\ref{eq:secondopenflag}) is the partial flag $\mathcal{P}\mathcal{F}_z$ denoted $e_{n-1} + e_n \subset e_1 \subset \dots \subset e_{j-2} \subset\{e_{j-1}, e_j\} \subset e_{j+1} \subset \dots \subset e_{n-2}\subset e_n $. As in the proof of Theorem \ref{thm:openorbit}, we compute the stabilizer  $G_{n-1,z}$ of this partial flag in $G_{n-1}$.  For $g$ in $G_{n-1,z}$, since $g$ stabilizes the line spanned by $e_{n-1}+e_n$, we see that $g \cdot e_{n-1}=e_{n-1}$.   Recalling the notation setting $U_k$ equal to the span of $e_1, \dots, e_k$, we see as before that $g\cdot U_k \subset U_{k}+\C(e_{n-1} + e_n)$ for $k=1, \dots, n-2$ and $k \not= j-1$, and for $k=j-1$, $g\cdot U_{j-1} \subset U_{j}+\C(e_{n-1} + e_n).$  As in Theorem \ref{thm:openorbit}, since $g \in G_{n-1}$ and hence stabilizes the subspace $U_{n-1}$, we see that $g \in G_{n-2}$  and the claim follows since the subgroup of $G_{n-2}$ stablilizing the partial flag
$(e_1 \subset \dots \subset \{ e_{j-1}, e_{j} \} \subset \dots \subset e_{n-2})$ is the simple parabolic subgroup of $G_{n-2}$ corresponding to $\alpha_{j-1}$.  The proofs of the claim in the orthogonal cases are similar, and are left to the reader.

To see the assertion about root types, consider the variety $\mathcal{Y}_{Q}=p^{-1}(K\cdot (Q,eB_{n-1}))$, where $p:G/B \times K/B_{n-1} \to G/P_{\alpha} \times K/B_{n-1}$ in case $\alpha \in \Pi_{\fg}$ and $p:G/B \times K/B_{n-1} \to G/B \times K/P_{n-1, \alpha}$ in case $\alpha \in \Pi_{\fk}$.  For $\alpha \in \Pi_{\fk}$,
   then $\alpha$ is complex stable for $Q$ if and only if $\mathcal{Y}_Q$ consists of two double cosets with $K\cdot (Q,eB_{n-1})$ connected and codimension 1 in the closure of the other double coset.  This is the same as the condition for $\alpha$ to be complex stable for $Q^{op}$.  The remaining cases are similar.

  \end{proof}

 \begin{rem}\label{r.monoidsummary}
   As a consequence of the above results, we have completely understood the monoid action for simple roots of $\fk$, and for simple roots of $\fg$ in the subset $S$ associated to the orbit $Q_K$ by \eqref{eq:rootsforLevi}, and for simple roots $\alpha$ of $\fg$ not in $S$ when $m(s_{\alpha})*Q_K=Q_K$.   We will treat the remaining cases in a sequel to this paper.
   \end{rem}

\bibliographystyle{amsalpha.bst}

\bibliography{bibliography-1}

\end{document}